\newtheorem{thm}{Theorem}
\newtheorem{Satz}[thm]{Theorem}
\newtheorem{Proposition}[thm]{Proposition}
\newtheorem{Lemma}[thm]{Lemma}
\newtheorem{cor}[thm]{Corollary}
\newtheorem{rem}[thm]{Remark}
\newtheorem{Definition}[thm]{Definition}
\numberwithin{equation}{section} \numberwithin{thm}{section}
\newcommand{\cal}{\mathcal}
\newcommand{\C}{\mathbb{C}} 
\newcommand{\R}{\mathbb{R}} 
\newcommand{\N}{\mathbb{N}} 
\newcommand{\ol}{\overline}
\newcommand{\Ha}{\mathcal{H}}
\newcommand{\bd}{\partial}
\newcommand{\dom}{\mathrm{dom}}
\newcommand{\eps}{\varepsilon}
\newcommand{\sS}{\mathcal{S}}
\newcommand{\sM}{\mathcal{M}}
\newcommand{\sG}{\mathcal{G}}
\newcommand{\sR}{\mathcal{R}}
\newcommand{\sSR}{\mathcal{SR}}
\newcommand{\usS}{\overline{\mathcal S}}
\newcommand{\usM}{\overline{\mathcal M}}
\newcommand{\lsS}{\underline{\mathcal S}}
\newcommand{\lsM}{\underline{\mathcal M}}
\newcommand{\sL}{\mathcal{L}}
\newcommand{\sF}{\mathcal{F}}
\begin{document}
\title[Regular variation, Generalized contents, and fractal strings]{Regularly Varying Functions, Generalized contents, and the spectrum of fractal strings}
\author{Tobias Eichinger}
\address{Technische Universit\"at Berlin, Service-centric Networking,
    Telekom Innovation Laboratories, Ernst-Reuter-Platz 7,
    10587 Berlin, Germany}
\author{Steffen Winter}
\address{Karlsruhe Institute of Technology, Institute of Stochastics, Englerstr.2, 76131 Karlsruhe, Germany}

\subjclass[2000]{35P20, 28A80 }
\keywords{Minkowski content, fractal string, Laplace operator, spectral asymptotics, eigenvalue counting function, Weyl-Berry conjecture, regularly varying function, Karamata theory}
\begin{abstract}
We revisit the problem of characterizing the eigenvalue distribution of the Dirichlet-Laplacian on bounded open sets $\Omega\subset\R$ 
 with fractal boundaries. It is well-known from the results of Lapidus and Pomerance \cite{LapPo1} that the asymptotic second term of the eigenvalue counting function can be described in terms of the Minkowski content of the boundary of $\Omega$ provided it exists.
He and Lapidus \cite{HeLap2} discussed a remarkable extension of this characterization to sets $\Omega$ with boundaries that are not necessarily Minkowski measurable. They employed so-called generalized Minkowski contents given in terms of gauge functions more general than the usual power functions. 
The class of valid gauge functions in their theory is characterized by some technical conditions, 
the geometric meaning and necessity of which is not obvious. Therefore, it is not completely clear how general the approach is and which sets $\Omega$ are covered. 
Here we revisit these results and put them in the context of regularly varying functions. Using Karamata theory, it is possible to get rid of most of the technical conditions and simplify the proofs given by He and Lapidus, revealing thus even more of the beauty of their results. Further simplifications arise from characterization results for Minkowski contents obtained in \cite{RW13}. We hope our new point of view on these spectral problems will initiate some further investigations of this beautiful theory.
\end{abstract}

\maketitle

\section{Introduction}

Given a bounded open set $\Omega\subset\R^d$ with boundary $F:=\bd \Omega$, consider the eigenvalue problem
\begin{equation}\label{eq:167}
\begin{array}{rll} -\Delta u &= \lambda u  &\text{ in } \Omega,\\
u &= 0 &\text{ on } F, \end{array}
\end{equation}
where $\Delta = \sum_{i=1}^d\partial^2/\partial x_i^2$ denotes the Laplace operator.
Recall that $\lambda\in\R$ is called an eigenvalue of \eqref{eq:167}, if there exists a function $u \neq 0$ in $H_0^1(\Omega)$ (the closure of $C_0^\infty(\Omega)$, the space of smooth functions with compact support contained in $\Omega$, in the Sobolev space $H_1(\Omega)$) satisfying $-\Delta u = \lambda u$ in the distributional sense.

It is well-known that the spectrum of $\Delta$ is positive and discrete, i.e.\ the eigenvalues of \eqref{eq:167} form an increasing sequence $(\lambda_{i})_{i\in\N}$  of strictly positive numbers with $\lambda_i\to\infty$, as $i\to\infty$.
If $\Omega$ is interpreted as a vibrating membrane held fixed along its boundary,  the reciprocals of the eigenvalues can be interpreted as the natural frequencies and the corresponding eigenfunctions as the natural vibrations (overtones) of the system. Much work has been devoted to the question how much geometric information about $\Omega$ can be recovered just from listening to its sound.

The famous \textit{Weyl's law} \cite{Weyl1} describes the growth of the eigenvalue counting function $N$, defined by
\begin{equation*}
N(\lambda) := \#\{ i \in \N :  \lambda_i \leq \lambda    \}, \quad \lambda>0.
\end{equation*}
It states (originally for sufficiently smooth domains, but nowadays known to hold for arbitrary bounded open sets $\Omega\subset\R^d$, see \cite{BiSo,Me}) that $N$ is asymptotically equivalent to the so-called \emph{Weyl term} $\varphi$ 
given by
\begin{equation}\label{eq:169}
\varphi(\lambda) := (2\pi)^{-d}\omega_d|\Omega|_d \lambda^{d/2}.
\end{equation}
Here $\omega_d$ denotes the volume of the $d$-dimensional unit ball and $|\cdot|_d$ is the Lebesgue measure in $\R^d$. 
Since the dimension $d$ of $\Omega$ as well as its volume appear in the Weyl term, we are not only able to infer the dimension of $\Omega$ from the growth of $N$ but also its volume. For sets with sufficiently smooth boundaries, Weyl conjectured in \cite{Weyl2} the existence of a second additive term of the order $(d-1)/2$ in the asymptotic expansion of $N$ with a prefactor being determined by the surface area of the boundary $F$ of $\Omega$:
\begin{equation}\label{eq:170}
N(\lambda) = \varphi(\lambda) + c_{d-1} \Ha ^{d-1}(F)\lambda^{(d-1)/2} + o(\lambda^{(d-1)/2}), \text{ as }\lambda \rightarrow \infty,
\end{equation}
with a constant $c_{d-1}$ solely depending on the dimension $d-1$ of $F$. Many people have contributed to the verification and generalization of Weyl's conjecture, culminating in the results of Seeley \cite{Seeley1, Seeley2}, Pham The Lai \cite{Ph} (who established that for all bounded open sets $\Omega$ with $C^\infty$-boundary the order $\frac{d-1}2$ of the second term is correct) and Ivrii \cite{Ivrii}, who proved the correctness of the prefactor under some additional assumption (roughly, that the set of multiply reflected periodic geodesics in $\ol{\Omega}$ is of measure zero). Therefore, for sets with smooth boundaries, one can `hear' not only the volume of the set but also the dimension and measure of its boundary.

And yet, what if the boundary is non-smooth? Motivated by experiments on wave scattering in porous media, the physicist M.~V.~Berry conjectured in \cite{Berry1,Berry2} that for general bounded open sets $\Omega$ the second term in \eqref{eq:170} should be replaced by $c_{n,H}\Ha^{H}(F)\lambda^{H/2}$ involving Hausdorff dimension $H$ and Hausdorff measure $\Ha^H(F)$ of the boundary $F$ of $\Omega$. This was disproved by Brossard and Carmona \cite{BrCa}, who suggested to use instead Minkowski content and dimension in the second term. More precisely, these notions need to considered relative to the set $\Omega$.
Recall that for any bounded set $\Omega\subset\R^d$, $F:=\bd \Omega$ and $s\geq0$, the \emph{$s$-dimensional Minkowski content of $F$} (relative to $\Omega$) is defined  by  
\begin{equation}
{\sM}^s(F) = \lim_{r\searrow 0}\frac{|F_r\cap\Omega|_d }{r^{d-s}},
\end{equation}
provided the limit exists. Here 
$F_r:=\{x\in\R^d: d(x,F)\leq r\}$ is the \emph{$r$-parallel set} of $F$. We write $\lsM^s(F)$ and $\usM^s(F)$ for the corresponding lower and upper limits. The numbers $\overline{dim}_M F:=\inf\{s\geq 0: \usM^s(F)=0\}$ and $\underline{dim}_M F:=\inf\{s\geq 0: \lsM^s(F)=0\}$ are the \emph{upper and lower Minkowski dimension} of $F$, and in case these numbers coincide, the common value is known as the Minkowski dimension $\dim_M F$ of $F$ (relative to $\Omega$).
If $\sM^s(F)$ is both positive and finite (for some $s$), we say that $F$ is \emph{($s$-dimensional) Minkowski measurable}, and if $0<\lsM^s(F)\leq\usM^s(F)<\infty$, then $F$ is called \emph{($s$-dimensional) Minkowski nondegenerate}. Note that in this case, necessarily $\dim_M F=s$.
(In the above notation and also in most of the further discussion, we suppress the dependence on $\Omega$, but of course it should be kept in mind that all these notions are considered relative to $\Omega$ here.)

Lapidus \cite{Lap1} established that indeed the (upper) Minkowski dimension $D$ of $F=\bd\Omega$ gives the correct order of growth for the second term (provided $\usM^D(F)<\infty$), and
 formulated the so-called \textit{Modified Weyl-Berry  conjecture} (MWB) under the assumption that $F$ is Minkowski measurable:
\begin{equation} \label{eq:MWB}
N(\lambda) = \varphi(\lambda)-c_{d,D} \sM^D(F) \lambda^{D/2} +o(\lambda^{D/2})\text{, as } \lambda \rightarrow \infty.
\end{equation}
Here $c_{d,D}$ is a constant depending only on $d$ and the Minkowski dimension $D = \dim_M(F)$ of $F$.  Lapidus and Pomerance showed in~\cite{LapPo1} that the MWB holds in dimension $d=1$, and in \cite{LapPo2} they constructed counterexamples, which disprove MWB in any dimension $d\geq 2$. In ~\cite{LapPo1}, they also established the following more general result, which characterizes the second order asymptotics of $N$ for sets $\Omega\subset\R$ with a Minkowski nondegenerate boundary $F$.

Recall that (for some $a\in [0,\infty]$) two positive functions $h_1,\,h_2$ defined in some neighborhood of $a$ 
are called \emph{asymptotically similar as $y\to a$}, in symbols $h_1(y)\asymp h_2(y)$, as $y\to a$, if and only if there are positive constants $\underline{c},\,\overline{c}$ such that $\underline{c}\, h_1(y) \leq h_2(y) \leq \overline{c}\, h_1(y)$ for all $y$ close enough to $a$. \label{def:asymp}
Moreover, $h_1$ and $ h_2$ are \emph{asymptotically equal, as $y\to a$},  $h_1(y)\sim h_2(y)$, as $y\to a$, if and only if $\lim_{y\to a} h_1(y)/h_2(y)=1$.

\begin{thm}[{\cite[Theorem 2.4]{LapPo1}}]\label{theo:1.1}
Let $\Omega\subset\R$ be a bounded open set and $F:=\bd \Omega$. Let $ D \in (0,1)$ and 
let ${\sL} =(l_j)_{j\in \N}$ denote the associated fractal string. Then the following assertions are equivalent:
	\begin{enumerate}
		\item [(i)] $0 < \lsM^D(F) \leq \usM^D(F) <\infty $,
		\item [(ii)] $l_j\asymp j^{-\frac{1}{D}}$, as $j \rightarrow \infty$,
		\item [(iii)] $\sum_{j=1}^{\infty} \{l_jx\} \asymp x^D$, as $x \rightarrow\infty$,
		\item [(iv)] $\varphi(\lambda) - N(\lambda) \asymp \lambda^{\frac{D}{2}}$, as $\lambda \rightarrow \infty$.
	\end{enumerate}
\end{thm}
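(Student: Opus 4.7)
The plan is to express all four quantities through the length sequence $(l_j)$ and reduce the four-way equivalence to a single Karamata-type statement on the associated counting function. Writing $\Omega = \bigsqcup_j I_j$ as a disjoint union of open intervals of lengths $l_j$ (in decreasing order), set
\[
 N_\mathcal{L}(x) := \#\{j : l_j \geq 1/x\}, \qquad T_\mathcal{L}(x) := \sum_{l_j < 1/x} l_j.
\]
Three elementary identities carry the structural information. First, splitting each interval by whether its length exceeds $2r$,
\[
 |F_r \cap \Omega|_1 = 2r\, N_\mathcal{L}(1/(2r)) + T_\mathcal{L}(1/(2r)).
\]
Second, the Dirichlet eigenvalues on $I_j$ are $k^2\pi^2/l_j^2$ with $k\in\N$, so $N(\lambda) = \sum_j \lfloor l_j\sqrt{\lambda}/\pi\rfloor$. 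Since $\varphi(\lambda) = |\Omega|\sqrt{\lambda}/\pi = \sum_j l_j\sqrt{\lambda}/\pi$, subtracting gives
\[
 \varphi(\lambda) - N(\lambda) = \sum_j \{l_j \sqrt{\lambda}/\pi\}.
\]
The substitution $x = \sqrt{\lambda}/\pi$ makes (iii) $\Leftrightarrow$ (iv) immediate.

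\textbf{The Karamata pivot.} The argument pivots on the two-sided Karamata statement
\[
 N_\mathcal{L}(x) \asymp x^D \;\Longleftrightarrow\; l_j \asymp j^{-1/D} \;\Longleftrightarrow\; T_\mathcal{L}(x) \asymp x^{D-1},
\]
valid for $D \in (0,1)$: the first equivalence is inversion of monotone regularly varying sequences, the second follows from Karamata's theorem for the tail (convergence uses $D<1$). Granting this, (ii) $\Rightarrow$ (i), (iii), (iv) is clean. For (i), both contributions to $|F_r\cap\Omega|_1$ are of order $r^{1-D}$. For (iii), write
\[
 \sigma(x) := \sum_j \{l_j x\} = x\,T_\mathcal{L}(x) + \sum_{l_j \geq 1/x}\{l_j x\},
\]
with the second term bounded by $N_\mathcal{L}(x)$; thus $\sigma(x)$ is bounded above by $x T_\mathcal{L}(x) + N_\mathcal{L}(x) \asymp x^D$ and below by $x T_\mathcal{L}(x) \asymp x^D$.

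\textbf{Converses and the main obstacle.} It remains to show that each of (i) and (iii) forces $N_\mathcal{L}(x) \asymp x^D$. The upper bound is routine: (iii) gives $x T_\mathcal{L}(x) \leq \sigma(x) \lesssim x^D$, hence $T_\mathcal{L}(x) \lesssim x^{D-1}$, and then $N_\mathcal{L}(2x) - N_\mathcal{L}(x) \leq 2x\,T_\mathcal{L}(x) \lesssim x^D$ summed dyadically (finite because $D<1$) yields $N_\mathcal{L}(x) \lesssim x^D$; (i) is handled analogously by dyadic differencing of $|F_r\cap\Omega|_1$. The main obstacle is the matching lower bound on $N_\mathcal{L}$: both (i) and (iii) mix $N_\mathcal{L}$ and $T_\mathcal{L}$ with a priori unknown relative weights, so subtraction does not directly yield a lower bound. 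The remedy is a scaling identity. For (iii), expanding $\{K l_j x\} = K\{l_j x\} - \lfloor K\{l_j x\}\rfloor$ gives
\[
 K\sigma(x) - \sigma(Kx) = \sum_{k=1}^{K-1} \#\{j : \{l_j x\} \geq k/K\} \;\leq\; (K-1)\,N_\mathcal{L}(Kx),
\]
while (iii) yields $K\sigma(x) - \sigma(Kx) \geq K x^D(c_1 - c_2 K^{D-1}) \gtrsim x^D$ for $K$ large enough (using $D<1$). Hence $N_\mathcal{L}(Kx) \gtrsim x^D \asymp (Kx)^D$. An analogous dyadic comparison of $|F_r\cap\Omega|_1$ at radii $r$ and $r/K$ closes (i) $\Rightarrow$ (ii). This Tauberian separation is precisely the step streamlined by the Karamata framework.
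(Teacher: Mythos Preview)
Your argument is correct and the overall architecture matches the paper's: both proofs pivot on the string counting function (your $N_{\mathcal L}(x)$ is the paper's $J(1/x)$, cf.\ \eqref{def:J} and \eqref{eq:H0-J-relation}) and establish the chain by showing each of (i)--(iv) is equivalent to $N_{\mathcal L}(x)\asymp x^D$. The identity $\varphi(\lambda)-N(\lambda)=\delta(\sqrt\lambda/\pi)$ and the volume decomposition $|F_r\cap\Omega|_1=2r\,N_{\mathcal L}(1/(2r))+T_{\mathcal L}(1/(2r))$ are used in both.

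The execution, however, differs in two places. For (i)$\Leftrightarrow N_{\mathcal L}\asymp x^D$ the paper inserts the S-content as an extra node and invokes the general $\R^d$ results of \cite{RW13} (Theorems~\ref{thm:M-S-equiv} and~\ref{theo:4}); your direct scaling comparison of $V_F(r)$ with $V_F(r/K)$ is more elementary and entirely self-contained in $d=1$, at the cost of not generalizing to arbitrary gauge functions or higher dimensions. For the harder direction (iii)$\Rightarrow N_{\mathcal L}\asymp x^D$, your summed identity $K\sigma(x)-\sigma(Kx)=\sum_j\lfloor K\{l_jx\}\rfloor\le (K-1)N_{\mathcal L}(Kx)$ is a clean packaging of the same mechanism the paper exploits in Theorem~\ref{theo:delta-J} via the $U+V$ split (there one isolates the terms with $\{l_jx\}<1/k$ and uses $k\{l_jx\}=\{kl_jx\}$ individually); your upper bound through $T_{\mathcal L}$ and dyadic telescoping is also somewhat simpler than the paper's argument via $J(1/x)-J(2/x)$. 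In short: same skeleton, but your proof is a tighter special-case argument, while the paper's route is designed to carry over verbatim to general $h\in\sSR_{1-D}$.

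One small remark: the ``Karamata pivot'' is stated as a three-way equivalence, but you only ever use the forward implications from (ii). The reverse $T_{\mathcal L}(x)\asymp x^{D-1}\Rightarrow l_j\asymp j^{-1/D}$ does hold (via the same $K$-scaling trick and monotonicity of $(l_j)$), but since you do not rely on it you might either drop the claim or add a one-line justification.
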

Here and throughout, $\{y\} := y- [y]$ denotes the fractional part and $[y]$ the integer part of a number $y \in \R$. (The connection between (iii) and (iv)  is due to the general relation \eqref{eq:N-delta}, see also Remark~\ref{rem:N-delta}.)

Given a bounded open set $\Omega\subset\R^d$, unfortunately, very often its boundary $F=\bd\Omega$ is neither Minkowski measurable nor Minkowski nondegenerate. Its volume function $ V_F(r):=\left|F_r\cap \Omega\right|_d$, $r>0$ may exhibit a growth behavior which differs significantly from the behavior of the functions $r^{1-D}$, as $r\searrow 0$ captured by the Minkowski contents. Substituting power functions with some well-chosen more general \textit{gauge functions} $h:(0,\infty)\to(0,\infty)$, it may be possible to understand much better the behavior of $V_F$ at the origin. This leads to the notion of generalized Minkowski contents: For any function  $h:(0,\infty)\to(0,\infty)$, the \emph{$h$-Minkowski content} of $F$ (relative to $\Omega$), is defined by
\begin{equation} \label{eq:h-Mink-def}
{\sM}(h;F) := \lim_{r\searrow 0 }\frac{|F_r\cap\Omega|_d}{h(r) },
\end{equation}
provided the limit exists. We denote by $\lsM(h;F)$ and $\usM(h;F)$ the corresponding lower and upper limits. Moreover, we call $F$ \emph{$h$-Minkowski measurable}, if and only if $0<{\sM}(h;F)<\infty$, and \emph{$h$-Minkowski nondegenerate}, if $0<\lsM(h;F)\leq\usM(h;F)<\infty$. (Note that again all these notions are relative to the set $\Omega$.)

Given some $\Omega\subset \R^d$ with boundary $F$ and some $h$ such that the $h$-Minkowski content $\sM(h;F)$ exists, the natural question arises, whether the second order asymptotic term of the eigenvalue counting function $N$ is still determined by the asymptotic behavior of the volume function  $V_F$, now described by $\sM(h;F)$.

A partial answer to this question has been given by He and Lapidus \cite{HeLap2}. Introducing a class $\sG_s$ of gauge functions (for each $s\in (0,1)$; cf. Definition \ref{def:gd}), they generalized the results in \cite{LapPo1} employing $h$-Minkowski contents with gauge functions $h\in \sG_s$. In particular, they obtained a generalization of \eqref{eq:MWB} to $h$-Minkowski measurable boundaries as well as the following generalization of Theorem \ref{theo:1.1} to sets with $h$-Minkowski nondegenerate boundaries:

\begin{thm}[{\cite[Theorem 2.7]{HeLap2}}]\label{theo:1.2}
Let $\Omega\subset\R$ be a bounded open set and $F:=\bd \Omega$. 
Denote by ${\mathcal{L}} =(l_j)_{j\in\N}$ the associated fractal string and let $h\in \sG_{1-D}$ for some $D\in(0,1)$. Then the following assertions are equivalent:
	\begin{enumerate}
		\item [(i)] $0 < {\lsM}(h;F) \leq {\usM}(h;F)<\infty $,
		\item [(ii)] $l_j\asymp g(j)$, as $j \rightarrow \infty$,
		\item [(iii)] $\sum_{j=1}^{\infty} \{l_jx\} \asymp f(x)$, as $x \rightarrow \infty$,
		\item [(iv)] $\varphi(\lambda) - N(\lambda) \asymp f(\sqrt{\lambda})$, as $\lambda \rightarrow \infty$,
	\end{enumerate}
	where $g(x) := H^{-1}({1}/{x})$ with $H(x):=x/h(x)$, and   $f(x):=1/{h(1/x)}$.
\end{thm}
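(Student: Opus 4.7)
The plan is to establish the chain of equivalences (i)\,$\Leftrightarrow$\,(ii)\,$\Leftrightarrow$\,(iii)\,$\Leftrightarrow$\,(iv), using regular variation as the common thread. Since $h\in\sG_{1-D}$, I will treat $h$ as regularly varying at $0$ of index $1-D$. By Karamata's inversion and composition theorems, $H(x)=x/h(x)$ is then regularly varying at $0$ of index $D$, the inverse $H^{-1}$ is regularly varying at $0$ of index $1/D$, and both $g(x)=H^{-1}(1/x)$ and $f$ inherit the corresponding regular variation at $\infty$. This will permit $\asymp$-asymptotics to be transferred freely under inversion, composition, and rescaling by positive constants.

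The equivalence (i)\,$\Leftrightarrow$\,(ii) is geometric. Using the counting function $N_\sL(y):=\#\{j:l_j\geq y\}$ and the standard decomposition
\[
V_F(r)=2r\,N_\sL(2r)+\sum_{l_j<2r}l_j,
\]
Karamata's theorem on tails of regularly varying sequences converts $V_F(r)\asymp h(r)$ into $N_\sL(1/x)\asymp 1/H(1/x)$, which upon inversion of regularly varying functions is equivalent to $l_j\asymp g(j)$. The characterization results of \cite{RW13} make this step essentially routine. The equivalence (iii)\,$\Leftrightarrow$\,(iv) is immediate from the identity $\varphi(\lambda)-N(\lambda)=\sum_{j}\{l_j\sqrt\lambda/\pi\}$ from \eqref{eq:N-delta}, combined with the fact that the regular variation of $f$ yields $f(\sqrt\lambda/\pi)\asymp f(\sqrt\lambda)$.

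The heart of the proof is (ii)\,$\Leftrightarrow$\,(iii). For the direction (ii)\,$\Rightarrow$\,(iii), I would split
\[
\sum_{j}\{l_jx\}=x\sum_{l_j<1/x}l_j+\sum_{l_j\geq 1/x}\{l_jx\}.
\]
The second piece is bounded by $N_\sL(1/x)$, and Karamata's theorem applied to the tail of the regularly varying sequence $(l_j)$ of index $-1/D<-1$ gives $\sum_{l_j<1/x}l_j\asymp(1/x)N_\sL(1/x)$; multiplication by $x$ yields the same order as the second piece, namely $N_\sL(1/x)\asymp f(x)$. The converse (iii)\,$\Rightarrow$\,(ii) I would handle by contraposition: if $l_j\asymp g(j)$ fails, then by choosing a sequence of test values $x_k$ near $1/l_k$, at which a controlled fraction of the fractional parts $\{l_jx_k\}$ is bounded away from $0$ and $1$, one can force a violation of (iii).

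The Tauberian step in (iii)\,$\Rightarrow$\,(ii) is what I expect to be the main obstacle. The terms $\{l_jx\}$ oscillate in $[0,1)$ and can align or cancel in ways not directly controlled by regular variation, so the asymptotic order of the sum does not obviously determine the sequence pointwise. This is precisely the point at which \cite{HeLap2} imposed extra technical conditions in the definition of $\sG_s$. The strategy here is that the combination of Karamata theory, the monotonicity of $(l_j)$, and the characterizations from \cite{RW13} should render most of those conditions superfluous, revealing a cleaner structure behind the result.
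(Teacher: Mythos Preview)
Your overall architecture matches the paper's: regular variation of $h$ propagates to $H$, $H^{-1}$, $g$, $f$; (i)\,$\Leftrightarrow$\,(ii) via the volume decomposition and the S-content/[RW13] machinery; (iii)\,$\Leftrightarrow$\,(iv) via the identity \eqref{eq:N-delta}; and (ii)\,$\Rightarrow$\,(iii) by splitting $\sum_j\{l_jx\}$ at the index $J(1/x)$ and invoking Karamata on the tail $\sum_{j>J(1/x)}l_j$. All of this is essentially what the paper does (see Theorems~\ref{thm:M-S-equiv}, \ref{theo:4}, \ref{theo:8} and Lemma~\ref{lem:sumA}).

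The genuine gap is precisely where you flag it: (iii)\,$\Rightarrow$\,(ii). Your contraposition sketch (``choose $x_k$ near $1/l_k$ so that a controlled fraction of the $\{l_jx_k\}$ is bounded away from $0$ and $1$'') does not work as stated: you have no a priori control over the distribution of $\{l_jx\}$ in $[0,1)$, and nothing prevents massive cancellation or alignment along any particular sequence of $x$'s. The paper does \emph{not} argue by contraposition. It proves directly that $\delta(x)\asymp f(x)$ forces $J(1/x)\asymp f(x)$ (Theorem~\ref{theo:delta-J}), via two distinct tricks. For the lower bound on $J$: fix an integer $k$, split $\delta(x)=U(x)+V(x)$ according to whether $\{l_jx\}<1/k$ or $\geq 1/k$, and use the elementary identity $k\{\gamma\}=\{k\gamma\}$ whenever $k\{\gamma\}<1$ to show $kU(x)\leq\delta(kx)$; with $k$ large and the regular variation of $f$ this gives $U(x)\leq\tfrac12\delta(x)$, hence $V(x)\geq\tfrac12\delta(x)$, and $V(x)\leq J(1/(kx))$ trivially. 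For the upper bound on $J$: first show $J(1/x)-J(2/x)\leq c\,f(x)$ by a separate $\sigma/\kappa$ split of the indices in $(J(2/x),J(1/x)]$ (the point being that $\{l_jx\}<1/2$ there forces $\{l_j x/2\}\geq 1/2$, so $\delta(x/2)$ catches what $\delta(x)$ misses), and then telescope $J(1/x)=\sum_{k}\bigl(J(2^k/x)-J(2^{k+1}/x)\bigr)$ using $f(x/2)\leq c_2 f(x)$ with $c_2<1$. Neither of these ideas is suggested by your contraposition outline, and this is the step that actually requires invention.
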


The classes $\sG_s$ of gauge functions appearing in Theorem \ref{theo:1.2} are defined as follows.

 \begin{Definition}[{\cite[Definition 2.2]{HeLap2}}]\label{def:gd}
 Let $s\in (0,1)$. A function $h:(0,\infty) \rightarrow (0,\infty)$ is in $ \sG_s$, if and only if the following three conditions are satisfied:
 \begin{enumerate}
 	\item[(H1)] $h$ is a continuous, strictly increasing function with $\lim_{x\searrow 0}h(x)=0$,\\
 $\lim_{x\searrow 0}{h(x)}/{x}  =\infty$ and $\lim_{x\rightarrow \infty}h(x) = \infty$.
 	\item[(H2)] For any $t>0$,
 	\begin{equation*}
 	\lim_{x\searrow 0}\frac{h(tx)}{h(x)} = t^s,
 	\end{equation*}
 	where the convergence is uniform in $t$ on any compact subset of $(0,\infty)$.
 	\item[(H3)] There exist some constants $\tau \in (0,1)$, $ m>0$, $x_0$, $t_0 \in (0,1]$ such that
 	\begin{equation*}
 	\frac{h(tx)}{h(x)} \geq mt^\tau,
 	\end{equation*}
 	for all $0<x\leq x_0 $, $0<t\leq t_0$.
 \end{enumerate}
\end{Definition}

From the definition of the classes $\sG_s$ it is not clear at all, how restrictive they are with respect to the family of sets covered by this approach. Given $\Omega$, it is not easy to decide, whether some suitable gauge function $h$ exists in $\sG_s$ (for some $s$) or not, such that the boundary $F=\bd \Omega$ is $h$-Minkowski nondegenerate or even $h$-Minkowski measurable. It is also not clear whether all of the conditions in the definition of $\sG_s$ are really necessary to establish the derived connections between the eigenvalue counting function $N$ and the volume function $V_F$. On the other hand, there might be a simpler class of gauge functions capable of serving the same purpose.

In the present paper we aim at shedding some light on these problems. We will introduce (for each $\varrho\in\R$) the class $\sSR_\varrho$ of $C^1$-smooth gauge functions $h$ that are regularly varying with index $\varrho$, i.e.\ which satisfy $\lim_{x\searrow 0}\frac{h(tx)}{h(x)} = t^\varrho$ for any $t>0$ (see Definition~\ref{def:reg} and compare with condition (H2) above). Using results from Karamata theory, the theory of regularly varying functions, we will establish results for the classes $\sSR_\varrho$ completely analogous to those obtained in \cite{HeLap2}.

The new classes $\sSR_\varrho$ of gauge functions have several advantages compared to the classes $\sG_s$ in He and Lapidus \cite{HeLap1, HeLap2}. First, their definition is simpler involving only two conditions - regular variation and smoothness (and the latter will turn out to be a convenience rather than a restriction). Second, the new analogues of Theorem~\ref{theo:1.2} and \eqref{eq:MWB} for $\sSR_\varrho$, stated in Theorem~\ref{theo:main}, hold for all $\Omega\subset\R$ covered by the old result. (We were hoping to extend the family of sets $\Omega$ covered, but, in fact, we will show that exactly the same family of sets is addressed by our results.) 
Third, using regularly varying gauge functions allows some structural insights, e.g.\ concerning (generalized) Minkowski measurability.
Last but not least, the proofs simplify significantly when using the new class. Employing Karamata theory, allows to separate very clearly the main geometric (and algebraic) arguments from pure `asymptotic calculus'. Moreover, the differentiability allows amongst others to apply some characterization results of Minkowski contents in terms of S-contents from \cite{RW13}.  S-contents describe the asymptotic behavior of the parallel surface areas of the given set $F$. They have been used to simplify some parts of the proof of Theorem~\ref{theo:1.1}, see \cite[\S 5]{RW13}. In analogy to this approach, we add a criterion in terms of generalized S-contents, which will admit further simplification of the proofs also in this setting of general gauge functions.

For these reasons, the new classes $\sSR_\varrho$ are certainly helpful in understanding which sets are covered by the results of He and Lapidus and which are not and whether the theory can be extended further. We see our discussion as a starting point for further investigations, paving the road for a further generalization of the theory.

\section{Regularly varying functions and statement of the main results} \label{sec:results}

We will now define the classes of gauge functions that we are going to use and state our main results. We start with regular variation, a notion dating back to Jovan Karamata in the 1930'ies. Nowadays, the theory of such functions is known as \emph{Karamata theory}.
While the classic theory is formulated for regular variation at $\infty$, see also Definition~\ref{def:reg2} below, we will require regular variation at $0$.

\begin{Definition}\label{def:reg}
	Let $\varrho\in \R$.
A function $h:(0,y_0)\to (0,\infty)$ (with $y_0>0$, possibly $\infty$) is called \emph{regularly varying (at $0$) of index $\varrho$}, if and only if $h$ is measurable and satisfies for all $t>0$
\begin{equation*}
\lim_{y\searrow 0}\frac{h(ty)}{h(y)} = t^\varrho.
\end{equation*}
We write $\sR_\varrho$ for the family of these functions. 
Functions $h\in\sR_0$ are also called \emph{slowly varying (at $0$)}.
\end{Definition}

Furthermore, a regularly varying function $h\in \sR_\varrho$ is in the class $\sSR_\varrho$ of \emph{smoothly varying functions (at $0$) of index $\varrho$}, if and only if $h$ is $C^1$-smooth in a neighborhood of $0$, i.e.\
	$h \in C^1(0,y_1],$
	for some $y_1>0$. 

We first discuss the relation of the classes $\sG_\varrho$, $\sR_\varrho$ and $\sSR_\varrho$. It turns out that they are asymptotically equivalent, by which we mean the following: Two classes of functions are \emph{asymptotically equivalent}, if for any function $h$ in one class there is an asymptotically equivalent function $\widetilde h$ in the other class, i.e.\ one such that $\widetilde h(y)\sim h(y)$, as $y\searrow 0$, and vice versa. (It is easy to see that this is an equivalence relation on classes of functions, justifying the notion.)

\begin{thm} \label{theo:classes}
  For any $\varrho\in(0,1)$, the three classes $\sG_\varrho$, $\sR_\varrho$ and $\sS_\varrho$ are  asymptotically equivalent.
\end{thm}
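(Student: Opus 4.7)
The plan is to establish the two one-sided inclusions $\sSR_\varrho\subset\sR_\varrho$ and $\sG_\varrho\subset\sR_\varrho$ and then show that every $h\in\sR_\varrho$ admits one asymptotic equivalent in $\sSR_\varrho$ and another in $\sG_\varrho$. Since the relation ``asymptotically equivalent'' on classes is clearly transitive, the three-fold equivalence follows. The inclusion $\sSR_\varrho\subset\sR_\varrho$ is immediate from the definition. For $\sG_\varrho\subset\sR_\varrho$ note that (H1) guarantees continuity (hence measurability) and (H2) is exactly the defining pointwise limit of $\sR_\varrho$ (in fact stronger, being uniform in $t$ on compact subsets of $(0,\infty)$).

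For a given $h\in\sR_\varrho$, the existence of a $C^\infty$ (hence $C^1$) asymptotic equivalent $\widetilde h\in\sSR_\varrho$ is the classical Smooth Variation Theorem of Karamata theory, transferred from infinity to $0$ via $x\mapsto 1/x$. To produce an equivalent in $\sG_\varrho$, I would start from Karamata's representation theorem: on a neighborhood of $0$ one has $h(x)=c(x)\,x^\varrho\exp\bigl(\int_x^{a}\varepsilon(t)/t\,dt\bigr)$ with $c(x)\to c_0\in(0,\infty)$ and $\varepsilon(t)\to 0$ as $t\searrow 0$. Replacing $c(x)$ by the constant $c_0$ and choosing $\varepsilon$ smooth yields a $C^1$ asymptotic equivalent $\widetilde h$ with $(\log\widetilde h)'(x)=(\varrho-\varepsilon(x))/x$; since $\varrho>0$ and $\varepsilon(x)\to 0$, this is positive near $0$, so $\widetilde h$ is strictly increasing there. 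Extending $\widetilde h$ smoothly and monotonically to all of $(0,\infty)$ with $\widetilde h(x)\to\infty$ at infinity does not affect the asymptotic behavior at $0$.

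It remains to verify (H1)--(H3) for this $\widetilde h$. Condition (H2) is the Uniform Convergence Theorem for regularly varying functions. The two remaining parts of (H1), namely $\widetilde h(x)\to 0$ and $\widetilde h(x)/x\to\infty$ as $x\searrow 0$, as well as (H3), all follow from Potter's bounds: for every $\delta>0$ one has $C'x^{\varrho+\delta}\le\widetilde h(x)\le Cx^{\varrho-\delta}$ for small $x$, and $\widetilde h(tx)/\widetilde h(x)\ge(1-\delta)t^{\varrho+\delta}$ for small $x$ and $t\in(0,1]$. The hypothesis $\varrho\in(0,1)$ lets me pick $\delta<\min(\varrho,1-\varrho)$, which simultaneously forces $\widetilde h(x)\to0$, $\widetilde h(x)/x\to\infty$, and validates (H3) with $\tau=\varrho+\delta\in(0,1)$ and $m=1-\delta$. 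The main technical point to watch is precisely this last bookkeeping: the range $\varrho\in(0,1)$ is used exactly to leave simultaneous room above $0$ and below $1$ for the Potter exponent, and to turn a purely local regularly varying function into one on all of $(0,\infty)$ compatible with the global-looking hypotheses of $\sG_\varrho$.
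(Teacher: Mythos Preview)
Your proposal is correct and follows essentially the same route as the paper: both establish the easy inclusions $\sG_\varrho,\sSR_\varrho\subset\sR_\varrho$, invoke the Smooth Variation Theorem to pass from $\sR_\varrho$ to $\sSR_\varrho$, use the Representation Theorem to get strict monotonicity near $0$, and the UCT for (H2). The only noteworthy difference is that for (H3) and the two limits in (H1) you appeal to Potter's bounds, whereas the paper derives these directly from the Representation Theorem (its Propositions~\ref{prop:1} and~\ref{lemma:H3}); since Potter's bounds are themselves a corollary of the representation, this is a packaging difference rather than a genuinely different argument.
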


It is rather obvious that  any $h\in\sG_\varrho$ is in $\sR_\varrho$ (because of condition (H2)), similarly we have $\sSR_\varrho\subset\sR_\varrho$, but in both cases a reverse relation is not obvious. A proof is given in Section~\ref{section:4}, when we discuss regularly varying functions in more detail (see Corollary~\ref{prop:3} and Theorem~\ref{thm:SR=G}). 
Theorem~\ref{theo:classes} is not particularly deep but very useful. It clarifies that we have some freedom in choosing gauge functions. Regular variation is the basis of all three classes, but we may impose some additional properties for our convenience without loosing generality. This is exemplified by the following simple statement saying that generalized Minkowski contents are invariant with respect to asymptotic equivalence.

\begin{Proposition}\label{lemma:4}
	Let $F\subset \R^d$ be compact and let $h,\tilde h:(0,\infty) \rightarrow (0,\infty)$  be gauge functions such that  $h(y) \sim \tilde{h}(y)$, as $y\searrow 0$. Then $F$ is $h$-Minkowski nondegenerate if and only if $F$ is $\tilde h$-Minkowski nondegenerate. More precisely, one even has
	\begin{equation*}
	0< \underline{\cal{M}}(\tilde{h};F) =\underline{\cal{M}}(h;F)<\infty \quad\text{ and }\quad  0< \ol{\cal{M}}(\tilde{h};F) = \ol{\cal{M}}(h;F) < \infty.
	\end{equation*}
	In particular, $F$ is $h$-Minkowski measurable if and only if $F$ is $\tilde{h}$-Minkowski measurable.
	\end{Proposition}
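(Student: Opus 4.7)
The plan is to reduce the statement to the elementary fact that multiplying by a factor tending to $1$ preserves both $\liminf$ and $\limsup$. Concretely, from the assumption $h(y)\sim\tilde h(y)$, as $y\searrow0$, one has $\tilde h(r)/h(r)\to 1$, as $r\searrow 0$, and so the identity
\begin{equation*}
\frac{|F_r\cap\Omega|_d}{\tilde h(r)} \;=\; \frac{|F_r\cap\Omega|_d}{h(r)}\cdot\frac{h(r)}{\tilde h(r)}
\end{equation*}
displays the $\tilde h$-volume quotient as a product of the $h$-volume quotient with a factor tending to $1$. Taking $\liminf_{r\searrow 0}$ and $\limsup_{r\searrow 0}$ on both sides then yields $\underline{\mathcal M}(\tilde h;F)=\underline{\mathcal M}(h;F)$ and $\overline{\mathcal M}(\tilde h;F)=\overline{\mathcal M}(h;F)$, which are the claimed identities.

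The only substantive point is the standard fact that if $a_r\to 1$ and $(b_r)$ is any nonnegative function, then $\liminf_{r\searrow 0}(a_r b_r)=\liminf_{r\searrow 0}b_r$ and analogously for $\limsup$. This is a short $\varepsilon$-argument (bound $a_r$ between $1-\varepsilon$ and $1+\varepsilon$ for all $r$ small enough, then let $\varepsilon\searrow 0$), valid regardless of whether the limit inferior/superior is finite or infinite. Consequently, the chain of equalities above holds in $[0,\infty]$, so the $h$-content is positive and finite if and only if the $\tilde h$-content is.

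The last sentence of the proposition, concerning $h$-Minkowski measurability, is then immediate: if $\mathcal M(h;F)$ exists as a positive finite limit, then $\underline{\mathcal M}(h;F)=\overline{\mathcal M}(h;F)=\mathcal M(h;F)$, and by the equalities just established the same holds with $\tilde h$ in place of $h$, so $\mathcal M(\tilde h;F)$ exists and equals $\mathcal M(h;F)$; the reverse implication is symmetric.

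I do not expect any real obstacle here; the result is a direct consequence of the definitions of $\underline{\mathcal M}(h;F)$ and $\overline{\mathcal M}(h;F)$ in \eqref{eq:h-Mink-def} together with the basic lemma on limits inferior/superior of products. The proposition is essentially stated as a preparatory remark showing that the classes $\sG_\varrho$, $\sR_\varrho$, $\sSR_\varrho$ (which by Theorem~\ref{theo:classes} are asymptotically equivalent) are interchangeable for all generalized Minkowski content questions, and this is precisely what the short proof above records.
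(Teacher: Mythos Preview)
Your proof is correct and follows essentially the same approach as the paper: both arguments rest on the elementary $\varepsilon$-squeeze, bounding $\tilde h$ between $(1-\varepsilon)h$ and $(1+\varepsilon)h$ for small $r$, then taking $\liminf$/$\limsup$ and letting $\varepsilon\searrow 0$. Your formulation via the product $\frac{V_F(r)}{h(r)}\cdot\frac{h(r)}{\tilde h(r)}$ and the remark that the equalities hold in $[0,\infty]$ is a slightly cleaner packaging of exactly the same idea.
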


In connection with Proposition~\ref{lemma:4}, Theorem~\ref{theo:classes} clarifies that when searching for a suitable gauge function $h$ (for a given set $F$) within the class of regularly varying functions $\sR_\varrho$ (such that $F$ becomes $h$-Minkowski measurable/nondegenerate), then we can as well restrict to one of the classes $\sG_\varrho$ or $\sSR_\varrho$. Note also that the existence of such an $h\in\sR_\varrho$ necessarily implies that $\dim_M F$ exists and equals $d-\varrho$, cf.\ Remark~\ref{rem:8}.

From the above considerations 
one might get the impression that not much can be gained from reproving the results of He and Lapidus, obtained for gauge functions in $\sG_\varrho$, in the context of the classes $\sR_\varrho$ and $\sSR_\varrho$. Indeed, the above results make very clear that not a single additional set $\Omega$ will be covered by the new statements. However, even though our original intention was an extension of the theory to more general sets $\Omega$, our approach using regularly varying functions is still very useful, as it provides a more structural point of view on the studied classes of gauge functions and, more importantly, as it clarifies and simplifies many of the arguments of the long and technical proofs in \cite{HeLap2}. It shows that regular variation is the essential property which makes things work.

As an interesting side result, we mention that generalized Minkowski measurability with respect to regularly varying functions can be characterized without referring to any particular gauge function:

\begin{thm} \label{theo:mink}
  Let $\Omega\subset\R^d$ be a bounded open set and $F=\bd\Omega$. There exists a gauge function $h\in\sR_\varrho$ for $F$ such that $F$ is $h$-Minkowski measurable, if and only if the volume function $V_F$ of $F$ (i.e., the function $r\mapsto |F_r\cap\Omega|_d$, $r>0$) is regularly varying with index $\varrho$, i.e.~if and only if $V_F\in\sR_\varrho$.
\end{thm}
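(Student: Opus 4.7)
My plan is to exploit that $V_F$ itself is the canonical candidate for the gauge function, so that both implications reduce essentially to a one-line verification directly from the definitions.

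For the $(\Leftarrow)$ direction, I would assume $V_F\in\sR_\varrho$ and simply set $h:=V_F$. Since $\Omega$ is a nonempty bounded open set, every point of $F=\bd\Omega$ has interior points of $\Omega$ arbitrarily close, so $F_r\cap\Omega$ has positive Lebesgue measure and thus $V_F(r)>0$ for every $r>0$. Hence $h$ is a positive measurable function on $(0,\infty)$ lying in $\sR_\varrho$, i.e.\ a legitimate gauge function, and trivially
\[\sM(h;F)=\lim_{r\searrow 0}\frac{V_F(r)}{h(r)}=1\in(0,\infty),\]
so $F$ is $h$-Minkowski measurable.

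For the $(\Rightarrow)$ direction, I would assume that $h\in\sR_\varrho$ is such that $c:=\sM(h;F)\in(0,\infty)$. The function $V_F$ is monotone non-decreasing in $r$, hence measurable, and positive on a neighborhood of $0$ by the argument above. For any fixed $t>0$, I would use the elementary decomposition
\[\frac{V_F(tr)}{V_F(r)}=\frac{V_F(tr)/h(tr)}{V_F(r)/h(r)}\cdot\frac{h(tr)}{h(r)}.\]
As $r\searrow 0$, the first factor tends to $c/c=1$ (applying the relation $V_F/h\to c$ once in the numerator, with argument $tr\searrow 0$, and once in the denominator) while the second factor tends to $t^\varrho$ by $h\in\sR_\varrho$. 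Thus $V_F(tr)/V_F(r)\to t^\varrho$ for every $t>0$, i.e.\ $V_F\in\sR_\varrho$.

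There is essentially no hard step here: the content of the theorem is the structural observation that $V_F$ itself is available as a gauge function, which is why regular variation of $V_F$ is not only sufficient but also necessary. The only minor technicalities are the positivity and measurability of $V_F$, both of which follow from elementary properties of parallel volumes. As a byproduct one sees that the regularly varying gauge function $h$ witnessing $h$-Minkowski measurability is unique up to asymptotic equivalence at $0$, consistent with the invariance in Proposition~\ref{lemma:4}.
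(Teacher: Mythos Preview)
Your proof is correct and follows essentially the same route as the paper. For the forward direction the paper observes $V_F\sim M\,h$ and invokes Lemma~\ref{lemma:8} (whose proof is exactly your decomposition $\tfrac{V_F(tr)}{V_F(r)}=\tfrac{V_F(tr)/h(tr)}{V_F(r)/h(r)}\cdot\tfrac{h(tr)}{h(r)}$), so you have simply unfolded that lemma inline; for the reverse direction the paper's write-up is silent, and your choice $h:=V_F$ is the obvious completion.
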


This means that the assumed regular variation of the gauge function reflects an essential property of the volume function itself. In particular, if a set is $h$-Minkowski measurable for some $h\in\sR_\varrho$, then it cannot be $g$-Minkowski measurable for some gauge function $g$ outside the class $\sR_\varrho$ and vice versa.
A proof of Theorem~\ref{theo:mink} is given in Section~\ref{sec:karamata}, see page~\pageref{proof:mink}. 

Before we formulate our main result, we recall the notion of \emph{(generalized) S-content}.
For any gauge function $h:(0,y_0)\to(0,\infty)$, the \emph{upper and lower h-S-contents} of $F:=\bd \Omega$ (relative to $\Omega$) are defined by
\begin{align}\label{eq:h-S-def}
	\quad\usS(h;F)&:= \limsup_{r \searrow 0}\frac{{\cal{H}}^{d-1}(\partial F_r\cap \Omega)}{h(r)} \text{ and }
	\lsS(h;F) = \liminf_{r \searrow 0}\frac{{\cal{H}}^{d-1}(\partial F_r\cap\Omega)}{h(r)},
\end{align}
respectively. If $\lsS(h;F)=\usS(h;F)$, then the common value $\sS(h;F)$ is the \emph{$h$-S-content} of $F$.
There are close general relations between (generalized) Minkowski and S-contents of a set $F$. In particular, for differentiable gauge functions $h$ the $h$-Minkowski content $\sM(h;F)$ and the $h'$-S-content $\sS(h';F)$ coincide whenever one of these contents exists. We refer to \cite{RW13,W16} and Section~\ref{sec:proofs1} for more details. Moreover, in dimension 1 there are close relations between the (generalized) S-content of a set and the so-called string counting function of the associated fractal string, see Section~\ref{sec:proofs2} for details.

Another advantage of smooth gauge functions is that,
for any $h\in\sSR_{1-D}$, $D\in(0,1)$, the following two auxiliary functions $f$ and $g$ are well-defined for $x$ large enough (i.e.\ for all $x\in[x_0,\infty)$ for some $x_0\geq 0$):
\begin{align}
   \label{eq:f-and-g} f(x) = x\, h(1/x) \quad \text{ and } \quad g(x) = H^{-1}(1/x),
\end{align}
where $H(x) = x/{h(x)}, x\in\dom(h)$ and $H^{-1}$ is the inverse of $H$, we refer to Proposition~\ref{prop:g-and-f} for details.
$g$ and $f$ take the same role as the corresponding functions in Theorem~\ref{theo:1.2}. They characterize the decay of the lengths of  the associated string $(l_j)$, as $j\to\infty$ and of the \emph{packing defect} $\delta(x):= \sum_{j=1}^{\infty
			}\{l_jx\}$, as $x\to\infty$, respectively.

\begin{Satz}[Main Theorem]\label{theo:main}
Let $\Omega\subset\R$ be a bounded open set and $F:=\bd \Omega$.  Assume $\dim_MF = D \in (0,1)$ and
denote by ${\mathcal{L}} =(l_j)_{j\in\N}$ the associated fractal string.
Let $h \in \sSR_{1-D}$ and let $f$ and $g$ be given as in \eqref{eq:f-and-g}.
	\begin{enumerate}[(1)]		
		\item[I.] $($Two-sided bounds.$)$ The following assertions are equivalent:
		\begin{enumerate}[(i)]
			\item $0< \lsM(h;F) \leq  \usM(h;F) < \infty$,
			\item $0< \lsS(h';F) \leq  \usS(h';F) < \infty$,
			\item $l_j \asymp g(j)$, as $j\rightarrow \infty$,
			\item $\sum_{j=1}^{\infty
			}\{l_jx\} \asymp f(x)$, as $x\rightarrow \infty$,
			\item $\varphi(\lambda) - N(\lambda) \asymp f(\sqrt{\lambda})$, as $\lambda \rightarrow \infty$.
		\end{enumerate}
		\item[II.] $($Minkowski measurability.$)$ The following assertions are equivalent:
		\begin{enumerate}
			\item[(vi)] $F$ is $h$-Minkowski measurable,
			\item[(vii)] $F$ is $h'$-S measurable,
			\item[(viii)] $l_j \sim L\,g(j)$, as $j\rightarrow \infty$ for some positive $L>0$.
		\end{enumerate}
		Under these latter assertions, $h$-Minkowski content, $h'$-S-content of $F$ and the constant $L$ are connected by the relation
		\begin{equation} \label{eq:main-M-S-L}
		\sM(h;F) = \sS(h';F) = \frac{2^{1-D}L^{D}}{1-D}.
		\end{equation}
Moreover, these assertions imply
\begin{align} \label{eq:main-N}
   N(\lambda)=\varphi(\lambda)- c_{1,D} \sM(h; F) f(\sqrt{\lambda}) + o(f(\sqrt{\lambda})), \quad \text{ as } \lambda\to\infty,
\end{align}
where $c_{1,D}=2^{D-1}\pi^{-D}(1-D)(-\zeta(D))$ with $\zeta$ being the Riemann zeta function.
	\end{enumerate}
\end{Satz}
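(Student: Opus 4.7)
The plan is to establish Part~I via the cycle (i)$\Leftrightarrow$(ii)$\Leftrightarrow$(iii)$\Leftrightarrow$(iv)$\Leftrightarrow$(v), and then derive Part~II by rerunning the same arguments with each $\asymp$ replaced by the stronger $\sim$; Part~I uses only the $\limsup/\liminf$ variants of Karamata's theorems, whereas Part~II uses their exact forms and also tracks the asymptotic constant $L$. The ordering separates three qualitatively different steps: (i)$\Leftrightarrow$(ii) is a Karamata statement relating $V_F$ to its derivative (the surface function); (ii)$\Leftrightarrow$(iii) is the geometric heart, translating surface-area asymptotics for the fractal string into asymptotics for its ordered length sequence; and (iii)$\Leftrightarrow$(iv)$\Leftrightarrow$(v) is the classical Dirichlet--Neumann bracketing machinery of \cite{LapPo1}, now applied to a general gauge. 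A key general input is that $h\in\sSR_{1-D}$ is $C^1$ and regularly varying, so Karamata's differentiation theorem gives $h'\in\sR_{-D}$ together with $h(r)\sim rh'(r)/(1-D)$; moreover $H(x)=x/h(x)\in\sR_D$ at $0$, hence by Karamata inversion $H^{-1}\in\sR_{1/D}$, so that $g\in\sR_{-1/D}$ and $f\in\sR_D$ at $\infty$.

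For (i)$\Leftrightarrow$(ii), the coarea identity $V_F(r)=\int_0^r \cal{H}^0(\bd F_u\cap\Omega)\,du$ places us in the setting of Karamata's monotone density theorem: its $\limsup/\liminf$ form gives the Part~I equivalence, and its exact form yields (vi)$\Leftrightarrow$(vii) together with $\sM(h;F)=\sS(h';F)$ in Part~II.

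The main step is (ii)$\Leftrightarrow$(iii). For the fractal string $\sL$, $\bd F_r\cap\Omega$ consists of exactly two points per interval $I_j$ with $l_j>2r$, so $\cal{H}^0(\bd F_r\cap\Omega)=2N_\sL^\ast(r)$ with $N_\sL^\ast(r):=\#\{j:l_j>2r\}$. Condition (ii) is therefore a two-sided asymptotic bound on $N_\sL^\ast$ in terms of $h'$, which, using $h'(r)\sim (1-D)h(r)/r = (1-D)/H(r)$, is equivalent to $N_\sL^\ast(r)\asymp 1/H(r)$. Since for the decreasingly ordered sequence $(l_j)$ the counting function $N_\sL^\ast$ is essentially the generalized inverse of $j\mapsto l_j/2$, the standard Karamata inversion argument (combined with the uniform convergence theorem to keep the $\asymp$-constants consistent) then shows this is equivalent to $l_j\asymp g(j)$. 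In Part~II the same argument in its exact form yields (viii)$\Leftrightarrow$(vi), with the constant $L$ tracked through the inversion.

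The remaining links (iv)$\Leftrightarrow$(v) and (iii)$\Leftrightarrow$(iv) are essentially routine once (ii)$\Leftrightarrow$(iii) is in place. The former rests on the identity $\varphi(\lambda)-N(\lambda)=\sum_j\{l_j\sqrt\lambda/\pi\}$, a direct consequence of the Dirichlet decomposition of $\Omega$ into intervals of length $l_j$ and the explicit spectrum on each, together with $f(\sqrt\lambda/\pi)\asymp f(\sqrt\lambda)$ from $f\in\sR_D$. The latter is obtained by writing $\sum_j\{l_jx\}=\sum_j l_jx-\sum_j[l_jx]$ and expressing the integer-part sum via $N_\sL^\ast$, reducing the claim to Karamata's theorem for sums of regularly varying sequences. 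In Part~II, the explicit constant in \eqref{eq:main-M-S-L} comes from evaluating $V_F(r)=\sum_j\min(l_j,2r)$ under $l_j\sim Lg(j)$ via Karamata's theorem, and the prefactor $c_{1,D}$ in \eqref{eq:main-N} is recovered exactly as in \cite{LapPo1} through the Mellin-type identity $\int_0^\infty\{y\}y^{-D-1}\,dy=-\zeta(D)/(1-D)$ applied to the generalized packing defect. The main obstacle I anticipate lies in (ii)$\Leftrightarrow$(iii): matching the $\liminf$ and $\limsup$ constants through the nonlinear inverse $H^{-1}$ requires careful use of Karamata's uniform convergence theorem, and in Part~II the need to track the exact asymptotic constant $L$ through two regularly varying inversions rules out soft arguments and forces the use of the sharp Karamata theorems.
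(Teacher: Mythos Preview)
Your overall architecture matches the paper's: the same chain of equivalences, the elasticity relation $h'(r)\sim(1-D)h(r)/r$, the identification of $\Ha^0(\partial F_r\cap\Omega)$ with twice the string counting function, and the spectral identity $\varphi(\lambda)-N(\lambda)=\delta(\sqrt\lambda/\pi)$. Your treatment of (i)$\Leftrightarrow$(ii) via coarea plus monotone density, and of (ii)$\Leftrightarrow$(iii) via regular-variation inversion, is essentially what the paper does (it cites \cite{RW13} for the former and gives a short direct argument for the latter). Incidentally, (ii)$\Leftrightarrow$(iii) is \emph{not} the hard step you anticipate: once one notes that $J$ is essentially the generalized inverse of $j\mapsto l_j/2$, the uniform convergence theorem handles both the $\asymp$ and the $\sim$ versions without difficulty.

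The genuine gap is in the direction (iv)$\Rightarrow$(iii), which you label ``essentially routine''. Your proposed decomposition $\sum_j\{l_jx\}=x|\Omega|-\sum_j[l_jx]$ with $\sum_j[l_jx]=\sum_{k\ge1}J(k/x)$ works for (iii)$\Rightarrow$(iv), but it cannot be run backwards: the sum $\sum_k J(k/x)$ has a linear leading term that cancels $x|\Omega|$, and $\delta(x)$ is only the lower-order remainder. Knowing merely that this remainder is $\asymp f(x)$ does not allow you to extract $J(1/x)\asymp f(x)$ by any Karamata-type theorem, since $\delta$ is neither monotone nor the primitive of something monotone. The paper's proof of this implication (Theorem~\ref{theo:delta-J}) is the most delicate step in Part~I and is genuinely combinatorial, not Karamata theory: for the lower bound on $J$ one fixes $k\in\N$, splits $\delta(x)$ according to whether $\{l_jx\}<1/k$, and uses the identity $k\{l_jx\}=\{kl_jx\}$ (valid when $\{l_jx\}<1/k$) to show the small part is at most $\tfrac12\delta(x)$, whence $J(1/(kx))\ge\tfrac12\delta(x)$; for the upper bound one first proves $J(1/x)-J(2/x)\le c\,f(x)$ by a separate case split on indices in $(J(2/x),J(1/x)]$, then telescopes and sums a geometric series in $f$. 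This argument (inherited from \cite{LapPo1}) is missing from your plan, and without it the cycle does not close.
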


Note that Theorem~\ref{theo:main} comprises the classical results of Lapidus and Pomerance \cite[cf.\ Theorems 2.1-2.4]{LapPo1} for sets $\Omega$ with Minkowski measurable/nondegenerate boundaries as a special case (by choosing the gauge function $h(y)=y^{1-D}$). It parallels the results of He and Lapidus in \cite[Theorems 2.4-2.7]{HeLap2} obtained for gauge functions $h$ of the classes $\sG_{1-D}$.
Our proofs bring the generalized theory of He and Lapidus in some parts closer back to the original arguments in the proofs of \cite{LapPo1} (in some other parts, however, we will use results on S-contents from \cite{RW13}).

\begin{rem}
\emph{One may wonder, whether also a converse to the last assertion in Theorem~\ref{theo:main} holds, i.e.\ whether the existence of an asymptotic second term for $N$ as in \eqref{eq:main-N} (assumed to hold for some $f$ given in terms of some $h$ as in \eqref{eq:f-and-g}) implies the $h$-Minkowski measurability of the boundary $F$. He and Lapidus have addressed this question (for $h\in\sG_{1-D}$). In \cite[Theorem 2.9]{HeLap2} they showed in particular that such a converse fails for all dimensions $D$, for which the Riemann zeta-function has a zero on the line $\Re(s)=D$, generalizing thus earlier results of Lapidus and Maier \cite{LapM1, LapM2} and showing a connection of this question to the Riemann hypothesis. More precisely, the converse fails (for the above mentioned $D$) for any $h\in\sG_{1-D}$ which is differentiable and satisfies $y h'(y)/h(y)\geq \mu$ for some $\mu>0$ (and all $y$). We point out that the differentiablity implies $h\in\sSR_{1-D}$ and that, by Lemma~\ref{lemma:essgauge} below, we have thus $\lim_{y\searrow 0}y h'(y)/h(y)={1-D}$, implying in particular that
$y h'(y)/h(y)\geq (1-D)/2=:\mu$ for all $y$ in a suitable neighborhood of $0$. Hence \cite[Theorem 2.9]{HeLap2} directly applies to any $h\in\sSR_{1-D}$. We are optimistic that our methods might also help to simplify the proof of this statement, but we have not yet tried to do this.}
\end{rem}

\begin{rem}
\emph{In the present paper, we have restricted ourselves to subsets $\Omega$ of $\R$.
In \cite{HeLap2}, also some results in the higher dimensional case are obtained, i.e.\ for bounded open sets $\Omega\in\R^d$, $d>1$. It is known that the MWB conjecture fails in this case and therefore these results are necessarily of a weaker nature. More precisely, under the assumption that the upper $h$-Minkowski content of $F:=\bd \Omega$ is finite (for some suitable gauge function $h$), an upper bound for the second asymptotic term of $N$ is derived in terms of the function $f$ (determined by $h$ as in \eqref{eq:f-and-g}). Although, the assumptions on the gauge function $h$ are slightly different in these higher dimensional results, it may be worth to revisit them in the light of the methods used here.}
\end{rem}

Although we have clarified significantly the (one-dimensional) theory of He and Lapidus by showing that regular variation is the driving force behind these results (while the other conditions can be omitted or follow automatically), we think that this is not the end of the story. Several examples of sets $\Omega$ and suitable associated gauge functions $h$ have been discussed in \cite[cf.\ \S7 and the Appendix]{HeLap2}, to which the results apply. Since all occurring gauge functions $h$ are differentiable and thus in the classes $\sSR_{1-D}$, they can also serve as examples for Theorem~\ref{theo:main} (and to apply the theorem, it is not necessary to check any of the technical conditions (H1)-(H3), only the regular variation). However, there might exist sets $\Omega$, for which regularly varying gauge functions are not suitable but for which nevertheless results similar to Theorem~\ref{theo:main} hold. It would be desirable to have such an example or to prove that it does not exist. In either case, we believe that the concept of regular variation -- and its various generalizations studied in Karamata theory (e.g.\ \emph{de Haan theory}), see e.g.\ the monograph \cite{BGT} -- will be useful for such an attempt.

\begin{rem}
  \emph{The results of Lapidus and Pomerance \cite{LapPo1,LapPo2} and in particular the discovered connections to the Riemann zeta function, have initiated an extensive study of fractal strings, which led to the theory of complex dimensions, see \cite{FGCD} and the many references therein. Nowadays these spectral problems and many related questions, including Minkowski measurability, are studied with the help of various zeta functions, which allow to derive explicit formulas for functions such as the the eigenvalue counting function or the tube volume. We have not made any efforts to relate our results to this theory. It seems, however, that sets that are generalized Minkowski measurable/nondegenerate (but not Minkowski measurable/nondegenerate) are not covered so far by the results in this theory. In the last years, even extensions (of some part of the theory) to higher dimensions are being discussed, see \cite{LRZ}. In the language of \cite[\S6.1]{LRZ}, where a classification scheme for sets $\R^d$ is proposed in terms of the behaviour of their tube functions, the boundaries $F$ studied here and in \cite{HeLap2} fall into the class of \emph{weakly degenerate sets}.}
\end{rem}

The remaining parts of the paper are organized as follows.
After some preliminary considerations concerning asymptotic similarity in Section~\ref{sec:pre}, we recall in Section~\ref{sec:karamata} some results from Karamata theory and discuss various useful consequences. On the way, we will prove the asymptotic equivalence of the classes $\sG_\varrho$, $\sR_\varrho$ and  $\sSR_\varrho$ (Theorem~\ref{theo:classes}) as well as Theorem~\ref{theo:mink}.
Sections~\ref{sec:proofs1} and \ref{sec:proofs2} are devoted to the proof of Theorem~\ref{theo:main}. In Section~\ref{sec:proofs1}, we will treat the `geometric part' and establish the connection between Minkowski contents, S-contents and the growth behaviour of the lengths of the associated fractal string, while in Section~\ref{sec:proofs2}, we finally establish the connection between the geometry of $\Omega$ and its spectral properties. An overview over the various steps of the proof is given in the diagrams on page~\pageref{diagrams}.

\section{Some preliminaries on asymptotic equivalence and similarity} \label{sec:pre}

In this section we collect some basic (and well-known) facts about asymptotic similarity $\asymp$ and asymptotic equivalence $\sim$ of functions, recall the definitions from page \pageref{def:asymp}. Here we formulate all facts for asymptotic similarity/equivalence at $a=0$, but for the case $a=\infty$ they hold analogously.

Consider the family $\sF$ of real valued positive functions $f$ defined on a right neighborhood of $0$, i.e.\ functions $f:(0,y_0)\to(0,\infty)$ for some $y_0>0$. Observe that $\sim$ and $\asymp$ are equivalence relations on $\sF$. In particular, both relations are transitive, i.e.\ if $f_1\asymp f_2$ and $f_2\asymp f_3$ then $f_1\asymp f_3$, as $y\searrow 0$ and similarly for $\sim$. Moreover, since asymptotic equivalence implies asymptotic similarity, we also have  the following implication:
\begin{align} \label{eq:asymp-sim}
  f_1(y)\asymp f_2(y) &\text{ and } f_2(y)\sim f_3(y) \text{ implies }  f_1(y)\asymp f_3(y), \text{ as } y\searrow 0.
\end{align}
We collect some useful rules for $\asymp$ and $\sim$. Some further rules will be stated later on for functions with extra properties such as regular variation or smoothness (see e.g.\ Lemma~\ref{lemma:8} or \ref{lemma:7}).
\begin{Lemma}
  \label{lem:asymp-properties}
  Let $f_1,f_2,h, g_1, g_2\in\sF$. Then, as $y\searrow 0$,

  \begin{itemize}
    \item[(i)] $f_1\sim f_2/h$, if and only if $h\cdot f_1\sim f_2$;
    \item[(ii)] if $f_1\sim f_2$ and $g_1\sim g_2$, then $f_1\cdot g_1 \sim f_2\cdot g_2$ and $f_1/ g_1 \sim f_2/ g_2$;
    \item[(iii)] if $f_1\sim f_2\sim g_1\sim g_2$, then $f_1+f_2\sim  g_1+g_2$.
  \end{itemize}
  The assertions hold analogously with $\sim$ replaced by $\asymp$.
\end{Lemma}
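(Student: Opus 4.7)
The plan is to derive each assertion by direct manipulation of the limit defining $\sim$ (respectively, the two-sided bounds defining $\asymp$), using only elementary limit arithmetic. Since every function in $\sF$ is positive on a right neighborhood of $0$, division is always permitted, so the $\sim$ and $\asymp$ versions can be handled in parallel throughout.

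For (i), I would simply rewrite the quotient $f_1(y)/(f_2(y)/h(y))$ as $h(y) f_1(y)/f_2(y)$; its limit equals $1$ precisely when $h\cdot f_1 \sim f_2$, giving both implications at once. The $\asymp$ version is obtained by multiplying the defining inequalities for $f_1 \asymp f_2/h$ by $h(y)$. For (ii), the product case rests on the identity $(f_1 g_1)/(f_2 g_2) = (f_1/f_2)(g_1/g_2)$ together with the product rule for limits, and the quotient case follows by the same identity with $g_1/g_2$ replaced by its reciprocal. The $\asymp$-versions come from multiplying the two pairs of two-sided bounds.

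For (iii), I would first invoke transitivity of $\sim$ to pass to the weaker (and sufficient) hypothesis $f_i \sim g_i$ for $i=1,2$. Writing $f_i(y) = g_i(y)(1+\eps_i(y))$ with $\eps_i(y) \to 0$ reduces the claim $f_1 + f_2 \sim g_1 + g_2$ to showing that the error term $(g_1 \eps_1 + g_2 \eps_2)/(g_1 + g_2)$ tends to $0$, which is immediate from positivity of the $g_i$ and the bound $\max(|\eps_1|,|\eps_2|)$. The $\asymp$-variant follows from the elementary inequality $\min(\underline c_1,\underline c_2)\,(g_1 + g_2) \leq f_1 + f_2 \leq \max(\overline c_1,\overline c_2)\,(g_1 + g_2)$, combined with the assumed two-sided bounds between consecutive functions in the chain. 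There is no substantial obstacle here; this is a routine collection of facts, and the only mild pitfall is in (iii), where one must not conflate $\sim$ of sums with $\sim$ of individual summands, which is why positivity is used essentially.
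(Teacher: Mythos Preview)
Your argument is correct in every part; each step is a routine manipulation and your handling of (iii) via positivity is exactly the right way to avoid the pitfall you mention. Note that the paper itself does not supply a proof of this lemma at all---it is stated as a collection of well-known facts and used without further justification---so there is nothing to compare your approach against beyond observing that you have filled in the details the authors chose to omit.
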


As a first simple application, we provide a proof of Proposition~\ref{lemma:4}. Note that a statement similar to Proposition~\ref{lemma:4} could be formulated for S-contents.

	\begin{proof}[Proof of Proposition~\ref{lemma:4}] The first assertion is a direct consequence of \eqref{eq:asymp-sim} applied to $f_1=V_F$, $f_2=h$ and $f_3=\tilde h$. To see the equality of the upper and lower generalized Minkowski contents, let $\epsilon \in (0,1)$. Then there is a $y_0>0$ such that
	\begin{equation*}
	(1-\epsilon) h(y)<\tilde{h}(y) < (1+\epsilon) h(y)
	\end{equation*}
	for all $y\in (0,y_0]$. Hence we obtain
	\begin{equation*}\label{eq:173}
	0<   \frac{V_F(y)}{(1+\epsilon)h(y)} \leq   \frac{V_F(y)}{\tilde{h}(y)}  \leq \frac{V_F(y)}{(1-\epsilon)h(y)}<\infty
	\end{equation*}
and the assertion follows by taking upper and lower limits in this relation as $y\searrow 0$ and then  letting $\epsilon\searrow 0$.
The last assertion is a direct consequence of the transitivity of $\sim$.
\end{proof}

\section{Regularly varying functions and Karamata theory} \label{section:4}\label{sec:karamata}

In this section we discuss functions of regular variation in more detail. In our exposition we concentrate on those results which turned out to be useful for our purposes and which prepare the proof of our main result. On the way we will also prove Theorems~\ref{theo:classes} and \ref{theo:mink}.
While the classic theory is formulated for regular variation at $\infty$ (see Definition~\ref{def:reg2} below), we will focus on \emph{regular variation at $0$} (cf.~Definition~\ref{def:reg}). We will reformulate some of the classic results of Karamata theory for this case (which is an easy exercise due to Remark~\ref{rem:1}). For the versions at $\infty$ (which we will also use occasionally) and further details we refer to the monograph \cite{BGT}.

\begin{Definition}\label{def:reg2}
	Let $\varrho\in \R$.
 A function $f :(x_0,\infty) \to (0,\infty)$ (with $x_0\geq 0$) is called \emph{regularly varying (at $\infty$) of index $\varrho$}, if and only if $f$ is measurable and satisfies for all $t>0$
\begin{equation*}
\lim_{x\to\infty}\frac{f(tx)}{f(x)} = t^\varrho.
\end{equation*}
We write $\sR_\varrho[\infty]$ for the family of all such functions.
$f\in\sR_0[\infty]$ are also called \emph{slowly varying (at $\infty$)}.
\end{Definition}

Note that, by Definition~\ref{def:reg}, the domain $\dom(h)$ of any $h\in\sR_\varrho$ is an interval of the form $(0,y_0)$ (a right neighborhood of zero). $h$ can easily be extended to a function on $(0,\infty)$ 
without affecting the regular variation property at $0$. It is easy to see that one can even extend $h$ to $(0,\infty)$ in such a way that properties like continuity, differentiability or strict monotonicity (which $h$ may have) are preserved. (Of course, if $\lim_{y\to y_0}h(y)=\infty$, then one has to redefine $h$ near $y_0$ for this extension.) Similarly, any function $f\in\sR_\varrho[\infty]$ (with domain $\dom(f)=(x_0,\infty)$) can be extended to the whole positive axis.

As pointed out above, regularly varying functions (at $\infty$) are well-studied, see e.g.\ \cite{BGT}. The following simple observation allows to transfer virtually all results known for regular variation at $\infty$ to regular variation at $0$.

\begin{rem}\label{rem:1}
 \emph{$h\in\sR_\varrho$ if and only if the function $\tilde h$ defined by $\tilde h(x)=h(1/x)$, $1/x\in\dom(h)$ is in $\sR_{-\varrho}[\infty]$. Indeed, if the domain of $h\in\sR_\varrho$ is $(0,y_0)$ then the domain of $\tilde h$ is the interval $(1/y_0,\infty)$ and for any $t>0$, one obtains
    \begin{equation*}
\lim_{x\to\infty}\frac{\tilde h(tx)}{\tilde h(x)} = \lim_{x\to \infty}\frac{h(1/(tx))}{h(1/x)}=\lim_{y\searrow 0}\frac{h(t^{-1}y)}{h(y)}={t^{-\varrho}},
\end{equation*}
implying $\tilde h\in\sR_{-\varrho}[\infty]$.
}
\end{rem}
 \begin{rem}\label{rem:1-1}
 \emph{
 Observe that condition (H2) in the definition of the classes $\sG_\varrho$ is regular variation at $0$ plus an extra uniformity requirement in the variable $t$. We will see below from Theorem~\ref{theo:UCT} that this local uniformity is automatically satisfied for regularly varying functions.
 In \cite{HeLap2}, property (H2) was called a \emph{homogeneity property}. Indeed, any regularly varying function $h\in \sR_\varrho$ is \emph{asymptotically homogeneous of degree $\varrho$} in the following sense: for any $t>0$,
		\begin{equation*}\label{eq:0}
		h(ty) = h(y) (t^\varrho+o(1)), \text{ as } y\searrow 0.
		\end{equation*}
This relation 
is equivalently described by $h(ty) \sim t^\varrho h(y)$, as $y\searrow 0$.
}
\end{rem}

The following simple observation characterizes regular variation in terms of slow variation, compare also with the Characterization Theorem~\cite[Theorem 1.4.1]{BGT}:

\begin{Lemma} \label{lem:CT}
Let $\varrho\in\R$.
  For any $h\in\sR_\varrho$, there is a slowly varying function $\ell\in\sR_0$ such that $h(y)=y^\varrho \ell(y)$ for any $y\in\dom(h)$.
\end{Lemma}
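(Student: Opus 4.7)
The plan is to construct the slowly varying factor $\ell$ explicitly by dividing out the expected power behavior, and then to verify slow variation by a one-line limit computation that uses precisely the definition of $h\in\sR_\varrho$. No clever choice or auxiliary construction is required; the statement is really a book-keeping observation that separates the ``pure power'' scaling behavior from the genuinely slowly varying correction.

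First, given $h\in\sR_\varrho$ with $\dom(h)=(0,y_0)$, I would simply set $\ell(y):=y^{-\varrho}h(y)$ for $y\in\dom(h)$. This is well-defined and positive on $\dom(h)$, and measurable as a product of measurable functions, since $h$ is measurable by the definition of $\sR_\varrho$ and $y\mapsto y^{-\varrho}$ is continuous on $(0,\infty)$. The factorization $h(y)=y^\varrho\ell(y)$ then holds by construction.

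Second, I would check that $\ell\in\sR_0$. For any fixed $t>0$ and any $y$ small enough that both $y$ and $ty$ lie in $\dom(h)$, one computes
\begin{equation*}
\frac{\ell(ty)}{\ell(y)} \;=\; \frac{(ty)^{-\varrho}h(ty)}{y^{-\varrho}h(y)} \;=\; t^{-\varrho}\cdot\frac{h(ty)}{h(y)}.
\end{equation*}
By regular variation of $h$, the ratio $h(ty)/h(y)$ tends to $t^\varrho$ as $y\searrow 0$, so the product tends to $t^{-\varrho}\cdot t^\varrho=1$, which is exactly the slow variation condition for $\ell$ at $0$.

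I do not foresee any real obstacle: the argument is a direct verification using only the definition of $\sR_\varrho$ and basic algebra. An alternative route would be to apply Remark~\ref{rem:1} to pass from $h\in\sR_\varrho$ to $\tilde h(x)=h(1/x)\in\sR_{-\varrho}[\infty]$ and quote the classical Characterization Theorem \cite[Theorem 1.4.1]{BGT}, then translate the resulting factorization back to $0$; but since the direct computation is shorter and self-contained, I would prefer it and only mention the reduction as a remark.
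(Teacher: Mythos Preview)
Your proof is correct and follows exactly the same approach as the paper: define $\ell(y):=h(y)/y^\varrho$ and verify directly that $\ell(ty)/\ell(y)\to 1$ as $y\searrow 0$. The paper's version is just more terse, omitting the explicit checks of positivity and measurability that you include.
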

\begin{proof}
   Define  $\ell$ by $\ell(y):=h(y)/y^\varrho$, $y\in\dom(h)$ and observe that, for any $t>0$, $\ell(ty)/\ell(y)\to 1$, as $y\searrow 0$ implying $\ell\in\sR_0$.
\end{proof}

Our next observation is that the classes $\sR_\varrho$ are stable with respect to asymptotic equivalence. Any function $g$  asymptotically equivalent to some regularly varying function $h$ is already regularly varying itself. In contrast, asymptotic similarity $\asymp$ does not preserve regular variation.  	
	\begin{Lemma}\label{lemma:8}	
	Let $h\in \sR_\varrho$ and let $g:(0,y_0)\to(0,\infty)$ be a measurable function such that
$h(y) \sim g(y),$ as $y\searrow 0$.	Then $g\in \sR_\varrho$, i.e.\ $g$ is regularly varying at $0$ with the same index $\varrho$.
\end{Lemma}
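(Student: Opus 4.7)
The plan is to verify the defining limit of regular variation for $g$ directly, by comparing it to the known limit for $h$ through the asymptotic equivalence $g \sim h$. Measurability of $g$ is already in the hypothesis, so only the asymptotic homogeneity at $0$ needs to be established.

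First I would fix an arbitrary $t > 0$ and, for all sufficiently small $y > 0$, perform the trivial algebraic decomposition
\begin{equation*}
\frac{g(ty)}{g(y)} = \frac{g(ty)}{h(ty)} \cdot \frac{h(ty)}{h(y)} \cdot \frac{h(y)}{g(y)}.
\end{equation*}
This splits the quotient into three factors whose limits as $y \searrow 0$ are known: the first factor tends to $1$ because $g \sim h$ at $0$ and $ty \searrow 0$ as $y \searrow 0$; the middle factor tends to $t^{\varrho}$ by $h \in \sR_\varrho$; and the third factor tends to $1$ again by $g \sim h$ (applying Lemma~\ref{lem:asymp-properties}(ii) to $h/g = 1/(g/h)$, or just observing that $\lim g/h = 1$ implies $\lim h/g = 1$). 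Taking the product of the limits yields $\lim_{y \searrow 0} g(ty)/g(y) = t^{\varrho}$, which is exactly the regular variation condition for $g$ with index $\varrho$.

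There is essentially no obstacle here; the only minor subtlety is ensuring that $g$ is positive (so that the quotients are well-defined) on a full right neighborhood of $0$, but this is immediate from $g \in \sF$ (i.e.\ $g : (0,y_0) \to (0,\infty)$) together with the assumed equivalence $g \sim h$. Combined with the measurability assumption on $g$, this completes the verification that $g \in \sR_\varrho$.
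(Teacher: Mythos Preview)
Your proof is correct and is essentially identical to the paper's: the paper defines $z(y):=g(y)/h(y)$, writes $g(ty)/g(y) = \bigl(h(ty)/h(y)\bigr)\cdot\bigl(z(ty)/z(y)\bigr)$, and observes that $z(y)\to 1$ forces the second factor to $1$ and the first to $t^\varrho$. Your three-factor decomposition simply splits the ratio $z(ty)/z(y)$ into its two constituent pieces.
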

	\begin{proof}
	 The function $z$ defined by $z(y) := \frac{g(y)}{h(y)}$ for all $y\in \dom(h)\cap \dom(g)$ is measurable and converges to 1 as $y\searrow 0$. Therefore,
	\begin{equation}
	\frac{g(ty)}{g(y)}= \frac{h(ty)}{h(y)}
\frac{z(ty)}{z(y)}
\rightarrow t^\varrho, \text{ as } y\searrow 0,
	\end{equation}
	for all $t>0$. Hence $g$ is regularly varying of index $\varrho$.
	\end{proof}

This simple observation will be used frequently in the proofs later on. It is also the key to Theorem~\ref{theo:mink}, which we prove now.

\begin{proof}[Proof of Theorem~\ref{theo:mink}.] \label{proof:mink}
By the assumptions, there exists some $h\in\sR_\varrho$ and some $M>0$, such that $\sM(h;F)=M$, which means that
$\lim_{r\searrow 0} \frac{V_F(r)}{M\, h(r)}=1$ or, equivalently $V_F(r)\sim M\, h(r)$, as $r\searrow 0$.
Since any (positive) constant multiple of a function in $\sR_\varrho$ is still in $\sR_\varrho$, it follows from Lemma~\ref{lemma:8}, that $V_F\in\sR_\varrho$.
\end{proof}

The Uniform Convergence Theorem (UCT) is one of the central results of Karamata theory.
We formulate a version for functions $h\in\sR_\varrho$. It can easily be derived from the corresponding statement for functions in $\sR_\varrho[\infty]$, cf.\ Remark~\ref{rem:1}.

\begin{Satz}[{UCT, cf.~\cite[Theorem 1.5.2]{BGT} for a version at $\infty$}]\label{theo:UCT}
	Let $\varrho\in\R$ and $h \in \sR_\varrho$. Then $\frac{h(ty)}{h(y)}\rightarrow t^\varrho$, as $y\searrow 0$ converges uniformly in $t$ on any compact subset of $(0,\infty)$.
\end{Satz}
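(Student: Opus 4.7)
The plan is to reduce this statement to the classical Uniform Convergence Theorem at infinity (cited as \cite[Theorem 1.5.2]{BGT}) via the inversion bijection captured by Remark~\ref{rem:1}. Given $h\in\sR_\varrho$, define $\tilde h(x):=h(1/x)$ on $(1/y_0,\infty)$; by Remark~\ref{rem:1}, $\tilde h\in\sR_{-\varrho}[\infty]$. The classical UCT at infinity then yields that
\[
\frac{\tilde h(sx)}{\tilde h(x)}\longrightarrow s^{-\varrho}, \quad \text{as } x\to\infty,
\]
uniformly for $s$ in any compact subset of $(0,\infty)$.

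Next I would perform the substitution $y=1/x$ and $s=1/t$ in the ratio of interest. Since $y\searrow 0$ is equivalent to $x\to\infty$, and $s^{-\varrho}=t^\varrho$, a direct computation gives
\[
\frac{h(ty)}{h(y)} \;=\; \frac{h(t/x)}{h(1/x)} \;=\; \frac{\tilde h(x/t)}{\tilde h(x)} \;=\; \frac{\tilde h(sx)}{\tilde h(x)}.
\]
Pointwise this is just the defining relation of $\sR_\varrho$, so the substitution is consistent. To obtain uniformity, fix a compact $K\subset(0,\infty)$ and observe that the map $t\mapsto 1/t$ is a homeomorphism of $(0,\infty)$ onto itself, whence $K^\ast:=\{1/t:t\in K\}$ is a compact subset of $(0,\infty)$. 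Applying the classical UCT to $\tilde h$ on $K^\ast$ therefore translates through the above identity into uniform convergence of $h(ty)/h(y)\to t^\varrho$ for $t\in K$.

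There is essentially no technical obstacle: the classical UCT does all the substantive work, while the substitution $t=1/s$, $y=1/x$ preserves both the compactness of the parameter set and the limit regime. The only minor point of care is the domain bookkeeping, since $h$ may only be defined on $(0,y_0)$; however, for any fixed compact $K$ of $t$-values there is a threshold $\delta>0$ such that $ty\in\dom(h)$ whenever $y\in(0,\delta)$ and $t\in K$, and throughout $(0,\delta)$ the identities above are valid.
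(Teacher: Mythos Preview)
Your proof is correct and follows exactly the approach the paper indicates: it does not give a written proof but states just before Theorem~\ref{theo:UCT} that the result ``can easily be derived from the corresponding statement for functions in $\sR_\varrho[\infty]$, cf.\ Remark~\ref{rem:1},'' which is precisely your inversion-and-substitution argument. Your attention to the homeomorphism $t\mapsto 1/t$ carrying compact sets to compact sets and to the domain bookkeeping makes the derivation fully rigorous.
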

  Theorem~\ref{theo:UCT} implies in particular that condition (H2) is satisfied for any regularly varying function $h\in\sR_\varrho$.
As a further consequence of Theorem~\ref{theo:UCT}, we may now prove the following result on asymptotic similarity/equivalence of regularly varying functions. It will be extremely useful in the proofs later on.
\begin{Lemma}\label{lemma:7}
	Let $\kappa \in \R$ and $g\in \sR_\kappa$. Let $f_1,f_2:(0,\infty)\to(0,\infty)$ such that $\lim_{y\searrow 0} f_2(y)=0$ and  assume $f_1(y) \asymp f_2(y)$, as $y\searrow 0$.
Then 
	\begin{equation} \label{eq:lem7}
	g(f_1(y)) \asymp g(f_2(y)), \text{ as }y\searrow 0.
	\end{equation}
Moreover, if $g\in \sR_0$  or $f_1(y) \sim f_2(y)$, as $y\searrow 0$, then
\begin{equation} \label{eq:lem7-2}
g(f_1(y)) \sim g(f_2(y)), \text{ as }y\searrow 0.
\end{equation}
Completely analogous relations hold, if $\lim_{y\searrow 0} f_2(y)=\infty$ and  $g\in\sR_\kappa[\infty]$, or if $f_1(x)\asymp f_2(x)$, as $x\to\infty$.
\end{Lemma}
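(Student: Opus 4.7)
The plan is to reduce everything to the Uniform Convergence Theorem (Theorem~\ref{theo:UCT}). Write $t(y) := f_1(y)/f_2(y)$, so that
\[
\frac{g(f_1(y))}{g(f_2(y))} = \frac{g(t(y)\, f_2(y))}{g(f_2(y))}.
\]
The hypothesis $f_1(y) \asymp f_2(y)$, as $y\searrow 0$, means that $t(y)$ eventually lies in a compact subinterval $[\underline c, \overline c] \subset (0,\infty)$. Since $f_2(y) \to 0$ and $g \in \sR_\kappa$, Theorem~\ref{theo:UCT} says the ratio $g(t z)/g(z) \to t^\kappa$ as $z\searrow 0$, uniformly for $t \in [\underline c,\overline c]$. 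Setting $z = f_2(y)$ and $t = t(y)$ is legitimate because $t(y)$ stays in that same compact set.

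For the first conclusion \eqref{eq:lem7}, fix any $\epsilon > 0$ (say $\epsilon = 1$). The uniform convergence yields $y_0 > 0$ such that $|g(f_1(y))/g(f_2(y)) - t(y)^\kappa| < 1$ for all $y \in (0,y_0)$. Since $t(y)^\kappa$ is itself bounded between two positive constants (its range lies in the compact subset of $(0,\infty)$ obtained by taking the $\kappa$-th power of $[\underline c,\overline c]$), the ratio $g(f_1(y))/g(f_2(y))$ is likewise bounded above and below by positive constants, establishing $g(f_1(y)) \asymp g(f_2(y))$, as $y\searrow 0$.

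For the sharper statement \eqref{eq:lem7-2}, two sub-cases are handled in parallel. If $g \in \sR_0$, the uniform limit is simply $t^0 = 1$, so the argument above forces $g(f_1(y))/g(f_2(y)) \to 1$ regardless of the behaviour of $t(y)$ within $[\underline c, \overline c]$. If instead $f_1(y) \sim f_2(y)$, then $t(y) \to 1$; applying the UCT on any compact neighborhood of $1$ (e.g.\ $[1/2,2]$) gives $g(t(y) f_2(y))/g(f_2(y)) \to 1^\kappa = 1$.

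For the analogous statements at $\infty$, the same argument runs verbatim after invoking the version of the UCT for $\sR_\kappa[\infty]$ (which follows from Theorem~\ref{theo:UCT} via the transform $h \leftrightarrow \tilde h(x) = h(1/x)$ of Remark~\ref{rem:1}); here $f_2(y) \to \infty$ plays the role of $z \to \infty$. The only conceivable obstacle is to make sure the argument of $g$ lands in its domain of regular variation, but this is automatic since $f_2$ (and hence $f_1$, up to a bounded multiplicative factor) is eventually close to $0$ (respectively $\infty$); so no real difficulty arises and the proof is essentially a one-line application of the UCT with the compactness of the range of $t(y)$ as the key input.
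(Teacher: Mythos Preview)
Your argument is correct and is essentially the paper's own proof: write the ratio as $g(t(y)f_2(y))/g(f_2(y))$ with $t(y)=f_1(y)/f_2(y)$ trapped in a compact interval $[\underline c,\overline c]$, then invoke the Uniform Convergence Theorem. The only slip is the casual choice ``$\epsilon=1$'', which need not yield a \emph{positive} lower bound (since $t(y)^\kappa-1$ may be $\le 0$); take instead $\epsilon<\min_{t\in[\underline c,\overline c]}t^\kappa$, as the paper does. Your treatment of the case $f_1\sim f_2$ via the UCT on a compact neighbourhood of $1$ is in fact cleaner than the paper's version, which tacitly uses a monotonicity of $g$ that has not been assumed.
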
	
\begin{proof} By assumption, there are positive constants $y_0, c_1, c_2$ such that
$c(y) :=\frac{f_1(y)}{f_2(y)} \in [c_1,c_2]$ for all $y\in (0,y_0)$. Assume first that $\kappa\geq 0$ and fix some $\eps\in(0,c_1^\kappa)$.  By the Uniform Convergence Theorem~\ref{theo:UCT}, the convergence $g(tx)/g(x)\to t^\kappa$ (as $x\searrow 0$) is uniform in $t$ on the compact interval $[c_1,c_2]$, i.e.\ there exists some $\delta>0$ such that $|g(tx)/g(x)-t^\kappa|\le\eps$ for each $0<x<\delta$ and each $t\in[c_1,c_2]$. Therefore, 
\begin{equation*}
\frac{g(f_1(y))}{g(f_2(y))} = \frac{g(c(y)f_2(y))}{g(f_2(y))} \leq c(y)^\kappa+\eps\leq c_2^\kappa+\eps=:c_2',
\end{equation*}
for each $y\in(0,y_0)$ such that $f_2(y)<\delta$. Observe that the hypothesis on $f_2$ implies there exists some $y_1=y_1(\delta)>0$ such that the $f_2(y)<\delta$ is true for all $0<y<y_1$.
In a similar way one obtains that the expression ${g(f_1(y))}/{g(f_2(y))}$ is bounded from below by $c_1':=c_1^\kappa-\eps$ for all $y\in(0,y_1)$.  This shows \eqref{eq:lem7} for the case $\kappa\geq 0$.

For $\kappa<0$, the only difference is that now the function $t\mapsto t^\kappa$ is decreasing. Fixing some $\eps\in(0,c_2^\kappa)$, a similar argument as in the previous case gives the upper bound $c_2':=c_1^\kappa+\eps$ and the lower bound $c_1':=c_2^\kappa-\eps$ for the expression  $g(f_1(y))/g(f_2(y))$ for all $y\in(0,y_1)$. This completes the proof of the first assertion.

In case $\kappa=0$ the constants used above are $c_1'=1-\eps$ and $c_2'=1+\eps$, where the $\eps>0$ was fixed. Observing that $\eps$ can be chosen as close to zero as we wish, assertion \eqref{eq:lem7-2} follows for $g\in\sR_0$.
Assertion \eqref{eq:lem7-2} is also valid under the hypothesis $f_1\sim f_2$, since in this case there is for any $\eps>0$ some $y_0$ such that $(1-\eps)\leq f_1(y)/f_2(y)\leq (1+\eps)$ for all $y\in(0,y_0]$. This implies
\begin{equation*} 
\frac{g((1-\eps)f_2(y))}{g(f_2(y))} \leq \frac{g(f_1(y))}{g(f_2(y))} \leq \frac{g((1+\eps)f_2(y))}{g(f_2(y))} \to (1+\eps)^\kappa,
\end{equation*}
as $y\searrow 0$. Similarly, the left expression converges to $(1-\eps)^\kappa$, as $y\searrow 0$. Now \eqref{eq:lem7-2} follows by letting $\eps$ tend to $0$.

The arguments for the case $\lim_{y\searrow 0} f_2(y)=\infty$ and $g\in\sR_\kappa[\infty]$, and for $f_1(x)\asymp f_2(x)$, as $x\to\infty$ are completely analogous.
\end{proof}

A second central result of Karamata theory is the following Representation Theorem, which provides an integral representation for slowly varying functions. Again, we formulate it for slow variation at $0$.
\begin{Satz}[{Representation Theorem, cf.  \cite[Theorem 1.3.1]{BGT}}]\label{theo:rep}
	A function $\ell$ is slowly varying at $0$, if and only if it has a representation
	\begin{equation}\label{eq:rep}
	\ell(y) = c\left(y\right) \exp\left( \int_{y}^{a} \frac{\varepsilon(u)}{u}du   \right)\text{, } y \in (0,a),
	\end{equation}
	for some $a>0$, where $c$ and $\varepsilon$ are measurable functions with  $\lim_{y\searrow 0} c(y) = C \in (0,\infty)$ and  $\lim_{y\searrow 0}\varepsilon(y) = 0$.
\end{Satz}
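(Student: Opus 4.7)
The plan is to derive this representation theorem at $0$ directly from its classical counterpart at $\infty$ (the cited result \cite[Theorem 1.3.1]{BGT}), using the correspondence $y\leftrightarrow 1/y$ recorded in Remark~\ref{rem:1}. Given $\ell\in\sR_0$ with domain $(0,y_0)$, set $\tilde\ell(x):=\ell(1/x)$ for $x>1/y_0$; then $\tilde\ell\in\sR_0[\infty]$ by Remark~\ref{rem:1}, and conversely every slowly varying function at $\infty$ arises in this way. So it suffices to show that the integral representation \eqref{eq:rep} is the $y\mapsto 1/y$-image of the standard representation at $\infty$.

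First I would apply the classical result to $\tilde\ell$: there exist $b>0$ and measurable functions $\tilde c,\tilde\varepsilon$ with $\tilde c(x)\to C\in(0,\infty)$ and $\tilde\varepsilon(x)\to 0$ as $x\to\infty$, such that
\begin{equation*}
\tilde\ell(x)=\tilde c(x)\exp\!\left(\int_{b}^{x}\frac{\tilde\varepsilon(u)}{u}\,du\right),\qquad x>b.
\end{equation*}
Evaluating at $x=1/y$ and performing the change of variable $u=1/v$ (so that $du/u=-dv/v$, and the limits $u\in[b,1/y]$ become $v\in[y,1/b]$ after reversing orientation) yields
\begin{equation*}
\ell(y)=\tilde c(1/y)\exp\!\left(\int_{y}^{1/b}\frac{\tilde\varepsilon(1/v)}{v}\,dv\right).
\end{equation*}
Setting $a:=1/b$, $c(y):=\tilde c(1/y)$ and $\varepsilon(v):=\tilde\varepsilon(1/v)$ gives exactly the representation \eqref{eq:rep}, and the required asymptotics $c(y)\to C$ and $\varepsilon(y)\to 0$ as $y\searrow 0$ follow immediately because $1/y\to\infty$ in that limit. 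Measurability of $c$ and $\varepsilon$ is preserved because $y\mapsto 1/y$ is a homeomorphism of $(0,\infty)$.

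For the converse I would simply reverse the same substitution: any representation of the form \eqref{eq:rep} for $\ell$ transforms into the corresponding representation for $\tilde\ell$ on $(b,\infty)$ with $b=1/a$, which by the classical theorem forces $\tilde\ell\in\sR_0[\infty]$ and hence $\ell\in\sR_0$ via Remark~\ref{rem:1}.

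There is no genuine obstacle here, since all the analytical content already resides in the classical representation theorem at infinity; the only point that needs verification is that the substitution $u=1/v$ in the defining integral is admissible and preserves the limits of $\tilde c$ and $\tilde\varepsilon$, which is immediate. Thus the proof reduces to a bookkeeping exercise translating the known statement from a neighborhood of $\infty$ to a right neighborhood of $0$.
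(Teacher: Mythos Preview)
Your proof is correct and follows essentially the same route as the paper's: both apply the classical representation theorem at $\infty$ to $\tilde\ell(x)=\ell(1/x)$ and then perform the substitution $u\mapsto 1/u$ in the integral to transfer the representation to a right neighborhood of $0$. Your version is in fact slightly more complete, since you also spell out the converse direction, which the paper's proof leaves implicit.
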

\begin{proof}
  Applying \cite[Theorem 1.3.1]{BGT} to the function $\tilde \ell\in\sR_0[\infty]$ given by $\tilde \ell(x):=\ell(1/x)$, yields the existence of some positive constants $\tilde a$ and $C$ and some functions $\tilde c$ and $\tilde \eps$ with $\tilde c(x)\to C$ and $\tilde \eps(x)\to 0$ as $x\to\infty$ such that
  \begin{equation*}
	\tilde \ell(x) = \tilde c\left(x\right) \exp\left( \int_{\tilde a}^{x} \frac{\tilde \eps(u)}{u}du   \right)\text{, } x\geq\tilde a.
	\end{equation*}
  Therefore, we get for any $0<y<a:=1/\tilde a$,
  \begin{equation*}
	\ell(y)=\tilde \ell(1/y) = \tilde c\left(1/y\right) \exp\left( \int_{\tilde a}^{1/y} \frac{\tilde \eps(u)}{u}du   \right)=c(y)\exp\left( \int_{1/a}^{1/y} \frac{\tilde \eps(u)}{u}du   \right),
	\end{equation*}
  where $c(y):=\tilde c(1/y)$ satisfies obviously $c(y)\to C$ as $y\searrow 0$. The representation \eqref{eq:rep} follows now with the substitution $u\mapsto 1/u$ and setting $\eps(y):=\tilde \eps(1/y)$.
\end{proof}

Note that the representation of $\ell$ given by Theorem \ref{theo:rep} is not unique, for we may add to $\varepsilon$ any function $\delta$ with $\lim_{y\searrow 0} \delta(y)=0$ (and adjust $c$ accordingly). 
Moreover, the upper bound $a$ of the integration interval in \eqref{eq:rep} may be shifted arbitrarily in $\dom(\ell)$, again by adjusting the function $c$. If $a>1$, for instance, we have
\begin{equation}
\ell(y)= c(y)\exp\left( \int_{y}^a \frac{\varepsilon(u)}{u}du   \right) = \tilde{c}(y)  \exp\left( \int_{y}^1 \frac{\varepsilon(u)}{u}du\right),
\end{equation}
where
\begin{equation*}
\tilde{c}(y) := c(y) \exp\left( \int_{1}^{a} \frac{\varepsilon(u)}{u}du   \right) .
\end{equation*}

Combining Lemma~\ref{lem:CT} with the Representation Theorem~\ref{theo:rep}, we obtain an analogous representation for regularly varying functions:
 Since for $h\in \sR_\varrho$, $\varrho\neq 0$, there is some $\ell\in\sR_0$ such that $h(y)=y^\varrho\ell(y)$, there must exist  functions $c$ and $\eps$ (with $c(y)\to C\in(0,\infty)$ and $\eps(y)\to 0$, as $y\searrow 0$) such that
\begin{equation*} 
h(y) = y^\varrho \ell(y) = c(y) \exp \left( \varrho\cdot \log y + \int_{y}^{1}  \varepsilon(u) \frac{du}{u}  \right)
= c(y) \exp\left(\int_{y}^{1} -\varrho + \varepsilon(u) \frac{du}{u}  \right)
\end{equation*}
From this representation it is easy to conclude the following asymptotic behaviour.
\begin{Proposition}[{cf. [BGT, Proposition 1.5.1]}]\label{prop:1}
	Let $\varrho\neq 0$ and $h\in \sR_\varrho$. Then
	\begin{align*}
	h(y) \rightarrow \left\{ \begin{array}{rrl}
	0, &\text{if} &\text{$\varrho>0$}\\
	\infty, &\text{ if}&\text{$\varrho<0$}
	\end{array}   \right., \quad \text{ as } y\searrow 0.
	\end{align*}
\end{Proposition}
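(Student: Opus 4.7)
The plan is to reduce the statement to a bound on the slowly varying factor via the Karamata Representation Theorem. By Lemma~\ref{lem:CT}, any $h\in\sR_\varrho$ admits a decomposition $h(y) = y^\varrho \ell(y)$ with $\ell\in\sR_0$. The key step is to establish \emph{Potter-type} bounds on $\ell$: for every $\delta>0$ there exist constants $C_1,C_2>0$ and some $y_1>0$ such that
\begin{equation*}
C_1 y^\delta \leq \ell(y) \leq C_2 y^{-\delta} \quad \text{for all } y\in(0,y_1).
\end{equation*}
Once these bounds are in hand, the conclusion follows immediately by choosing $\delta = |\varrho|/2$ and multiplying through by $y^\varrho$: for $\varrho>0$ one obtains $h(y) \leq C_2 y^{\varrho/2} \to 0$, while for $\varrho<0$ one obtains $h(y) \geq C_1 y^{\varrho/2} \to \infty$, as $y\searrow 0$.

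To prove the Potter-type bounds, I would apply the Representation Theorem~\ref{theo:rep} to $\ell$, writing $\ell(y) = c(y)\exp\bigl(\int_y^a \eps(u)/u\, du\bigr)$ for suitable $a>0$, with $c(y)\to C\in(0,\infty)$ and $\eps(u)\to 0$ as $u\searrow 0$. Given $\delta>0$, choose $y_1\in(0,a)$ with $|\eps(u)|<\delta$ for all $u\in(0,y_1)$. Splitting the integral at $y_1$, the contribution on $[y_1,a]$ is a bounded constant independent of $y$, while for $y\in(0,y_1)$ one has the estimate
\begin{equation*}
\left|\int_y^{y_1} \frac{\eps(u)}{u}\,du\right| \leq \delta \int_y^{y_1}\frac{du}{u} = \delta\log(y_1/y).
\end{equation*}
Exponentiating, the factor produced by the integral lies between $(y/y_1)^\delta$ and $(y/y_1)^{-\delta}$ up to a positive multiplicative constant absorbed from the tail $[y_1,a]$. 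Since $c(y)$ is bounded above and bounded away from zero in a neighborhood of $0$, the Potter bounds follow at once.

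The main (minor) obstacle is the book-keeping of the multiplicative constants arising from the fixed tail of the integral and from the prefactor $c$, but there is no conceptual difficulty. As an alternative, one could transfer the statement to infinity via Remark~\ref{rem:1}: the function $\tilde h(x):=h(1/x)$ lies in $\sR_{-\varrho}[\infty]$, and the classical version of the result at infinity (\cite[Proposition~1.5.1]{BGT}) gives $\tilde h(x)\to 0$ or $\infty$ according to the sign of $-\varrho$, which translates directly into the claim. The direct proof via the Representation Theorem is however self-contained and essentially immediate, so I would present it in that form.
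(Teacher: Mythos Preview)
Your proof is correct and follows essentially the same route as the paper: both factor $h(y)=y^\varrho\ell(y)$ via Lemma~\ref{lem:CT} and then invoke the Representation Theorem~\ref{theo:rep} for the slowly varying part. The paper absorbs the factor $y^\varrho$ into the exponential and reads off the limit directly from the sign of the integrand $-\varrho+\eps(u)$, whereas you phrase the same estimate as a Potter-type bound on $\ell$ and then multiply back by $y^\varrho$; this is only a cosmetic difference.
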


\begin{rem}\label{rem:8}
\emph{
Let $\Omega\subset\R^d$ be a bounded open set and $F=\bd\Omega$. Assume that $F$ is $h$-Minkowski nondegenerate for some $h\in\sR_{d-D}$, i.e.\ $0<\lsM(h;F)\leq \usM(h;F)<\infty$. Then the Minkowski dimension of $F$ exists and equals $D$.
Indeed, by Lemma~\ref{lem:CT}, we can write $h$ as $h(r)=r^{d-D}\ell(r)$ for some $\ell\in\sR_0$ and so,  for any $\gamma>0$,
  $$
  \sM^{D+\gamma}(F)=\lim_{r \searrow 0}\frac{V_F(r)}{r^{d-D-\gamma}}
  =\lim_{r \searrow 0}\frac{V_F(r)}{r^{d-D}\ell(r)}\,r^\gamma\ell(r)
  \leq\usM(h;F)\,\lim_{r \searrow 0}r^\gamma \ell(r).
  $$
  Since $r\mapsto r^\gamma\ell(r)$ is regularly varying with index $\gamma>0$, by Proposition~\ref{prop:1}, the last limit vanishes, 
  implying $\sM^{D+\gamma}(F)=0$ and thus $\overline{\dim}_MF\leq D$.
 Similarly, we get $\sM^{D+\gamma}(F)\geq \lsM(h;F)\,\lim_{r \searrow 0}r^\gamma \ell(r)$, where the last limit is now $+\infty$ for $\gamma<0$, implying $\underline{\dim}_M F\geq D$. This shows $\dim_M F=D$.
Therefore, clearly, the index of any suitable regularly varying gauge function $h$ for $F$ is necessarily equal to its (Minkowski) co-dimension $d-D$. We point out that this does not imply that for every $F$ with dimension $\dim_M F =D$ there exists a suitable gauge function $h\in\sR_{d-D}$.
}
\end{rem}

The following fact will be useful in the sequel, e.g.\ in Theorem~\ref{theo:kara1} below. It is another immediate consequence of the Representation Theorem.
\begin{cor}[{cf.~\cite[Corollary 1.4.2]{BGT}}]\label{cor:9}
\label{lemma:1}
	Let $\varrho\in\R$ and $h\in \sR_\varrho$. Then there is $x_0>0$ such that the functions $h$ and $\frac{1}{h}$ are locally bounded and locally integrable on $(0,x_0]$.
\end{cor}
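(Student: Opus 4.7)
The plan is to read off the conclusion directly from the Representation Theorem~\ref{theo:rep} applied to the slowly varying factor of $h$. First I would invoke Lemma~\ref{lem:CT} to write $h(y)=y^\varrho \ell(y)$ with $\ell\in \sR_0$, and then apply Theorem~\ref{theo:rep} to produce $a>0$ and measurable functions $c,\eps$ on $(0,a)$ with $c(y)\to C\in(0,\infty)$ and $\eps(y)\to 0$ as $y\searrow 0$, such that
\begin{equation*}
\ell(y)=c(y)\exp\left(\int_{y}^{a}\frac{\eps(u)}{u}\,du\right),\qquad y\in(0,a).
\end{equation*}
Using the convergence of $c$ and $\eps$, I can pick $x_0\in(0,a]$ small enough that $C/2\le c(y)\le 3C/2$ and $|\eps(y)|\le 1/2$ for all $y\in(0,x_0]$. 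Exploiting the flexibility (mentioned right after Theorem~\ref{theo:rep}) to shift the upper integration bound at the cost of multiplying $c$ by a positive constant, I may assume $a=x_0$.

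Next I would estimate the exponential factor on $(0,x_0]$: since
\begin{equation*}
\left|\int_{y}^{x_0}\frac{\eps(u)}{u}\,du\right|\le \frac{1}{2}\int_{y}^{x_0}\frac{du}{u}=\frac{1}{2}\log\frac{x_0}{y},
\end{equation*}
the factor $\exp\!\left(\int_{y}^{x_0}\eps(u)/u\,du\right)$ is sandwiched between $(y/x_0)^{1/2}$ and $(x_0/y)^{1/2}$. Combined with the uniform bounds on $c$, this yields constants $A_1,A_2>0$ with $A_1\, y^{1/2}\le \ell(y)\le A_2\, y^{-1/2}$ on $(0,x_0]$, and a symmetric two-sided power bound for $1/\ell$. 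Multiplying by $y^{\pm\varrho}$ then gives
\begin{equation*}
A_1\, y^{\varrho+1/2}\le h(y)\le A_2\, y^{\varrho-1/2} \quad\text{and}\quad A_1'\, y^{-\varrho+1/2}\le \frac{1}{h(y)}\le A_2'\, y^{-\varrho-1/2}
\end{equation*}
on $(0,x_0]$.

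Finally, these power bounds deliver local boundedness on any compact interval $[b,x_0]\subset(0,x_0]$, since power functions are bounded on intervals bounded away from $0$. Local integrability follows at once, because $h$ and $1/h$ are measurable (inherited from $h\in\sR_\varrho$) and bounded on each compact subinterval of $(0,x_0]$. I do not anticipate any real obstacle: the content is essentially an accounting of the Representation Theorem, the only minor care being the simultaneous choice of $x_0$ that controls both $c$ and $\eps$.
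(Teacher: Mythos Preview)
Your proof is correct and follows precisely the route the paper indicates: the paper gives no explicit argument for this corollary, merely calling it ``another immediate consequence of the Representation Theorem'' and citing \cite[Corollary~1.4.2]{BGT}, and your derivation via Lemma~\ref{lem:CT} and Theorem~\ref{theo:rep} makes that immediacy explicit. The only cosmetic remark is that when you shift the upper integration bound to $x_0$ the function $c$ changes by a fixed positive constant, so the numerical bounds $C/2$, $3C/2$ should be replaced by suitable constants; this does not affect the argument.
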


The Representation Theorem allows to show that any regularly varying function $h \in \sR_\varrho$ of index  $\varrho\in(0,1)$ already satisfies condition (H3) (cf.\ Definition~\ref{def:gd}).
This is another important step towards the asymptotic equivalence of the classes $\sG_D$ and $\sSR_D$.

\begin{Proposition}\label{lemma:H3}
	Let $\varrho \in (0,1)$ and $h \in \sR_\varrho$. Then $h$ satisfies hypothesis (H3), i.e.\ there are constants $m>0$, $\tau\in (0,1)$, $y_0, t_0 \in (0,1]$ such that
	\begin{equation*}
	\frac{h(ty)}{h(y)} \geq m t^\tau,
	\end{equation*}
	for all $0<y \leq y_0$ and $0< t \leq t_0$.
\end{Proposition}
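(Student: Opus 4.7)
The plan is to apply the Representation Theorem (Theorem~\ref{theo:rep}) together with Lemma~\ref{lem:CT} to obtain an explicit representation of $h$ near $0$, and then to estimate the resulting quotient $h(ty)/h(y)$ from below. More precisely, since $h\in\sR_\varrho$, Lemma~\ref{lem:CT} yields $h(y)=y^\varrho\ell(y)$ for some $\ell\in\sR_0$, and then the Representation Theorem gives
\begin{equation*}
\ell(y) = c(y) \exp\left(\int_y^a \frac{\varepsilon(u)}{u}\,du\right), \quad y\in(0,a),
\end{equation*}
where $c(y)\to C\in(0,\infty)$ and $\varepsilon(y)\to 0$ as $y\searrow 0$.

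From this representation, I would compute for $0<t\le 1$ and $0<y\le a$,
\begin{equation*}
\frac{h(ty)}{h(y)} = t^\varrho\,\frac{c(ty)}{c(y)}\,\exp\left(\int_{ty}^{y}\frac{\varepsilon(u)}{u}\,du\right).
\end{equation*}
The idea is then to fix a small $\delta>0$ (to be chosen momentarily) and pick $y_0\in(0,1]$ so small that $|\varepsilon(u)|\le\delta$ and $c(u)\in[C/2,2C]$ for all $u\in(0,y_0]$. For $0<t\le t_0:=1$ and $0<y\le y_0$, we have $[ty,y]\subset(0,y_0]$, so $c(ty)/c(y)\ge 1/4$ and
\begin{equation*}
\int_{ty}^{y}\frac{\varepsilon(u)}{u}\,du \;\ge\; -\delta\int_{ty}^{y}\frac{du}{u} \;=\; \delta\log t,
\end{equation*}
giving $\exp(\int_{ty}^{y}\varepsilon(u)/u\,du)\ge t^\delta$. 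Combining these bounds yields
\begin{equation*}
\frac{h(ty)}{h(y)} \;\ge\; \tfrac{1}{4}\,t^{\varrho+\delta}.
\end{equation*}

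The final (and only) subtle point is the choice of $\delta$: since $\varrho\in(0,1)$, we have $1-\varrho>0$, so we may fix $\delta\in(0,1-\varrho)$ at the outset. Then $\tau:=\varrho+\delta\in(\varrho,1)\subset(0,1)$ and (H3) holds with $m=1/4$, $\tau$ as above, and $y_0,t_0\in(0,1]$ as chosen. I do not expect any serious obstacle here; the whole argument rests on having enough room between $\varrho$ and $1$ to absorb the slowly-varying error term coming from $\varepsilon$, which is exactly the role of the hypothesis $\varrho<1$.
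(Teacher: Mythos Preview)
Your proof is correct and follows essentially the same approach as the paper's: both apply Lemma~\ref{lem:CT} to write $h(y)=y^\varrho\ell(y)$, invoke the Representation Theorem for $\ell$, and then estimate the quotient $h(ty)/h(y)$ from below by choosing a threshold $y_0$ on which $c$ is close to $C$ and $|\varepsilon|$ is small enough that $\tau:=\varrho+\delta<1$. The only cosmetic differences are that the paper separates the two small parameters (calling them $\delta$ for $c$ and $\gamma$ for $\varepsilon$) and obtains $m=(C-\delta)/(C+\delta)$ instead of your concrete $m=1/4$; the arguments are otherwise identical.
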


  \begin{proof}
     By Lemma~\ref{lem:CT}, $h(y)=y^\varrho \ell(y)$ for some $\ell\in\sR_0$, and, by the Representation Theorem \ref{theo:rep}, we can write $\ell$ as in equation (\ref{eq:rep}). Therefore, for any $t,y\in(0,1)$,
\begin{align}\label{eq:139}
\frac{\ell(y)}{\ell(ty)} &=
\frac{ c(y)\cdot \exp \left( \int^{1}_{y} \frac{\varepsilon(u)}{u}du    \right)}
{c(ty) \cdot \exp \left( \int^{1}_{ty}\frac{\varepsilon(u)}{u}du   \right)}
=\frac{c(y)}{c(ty)}\exp \left( -\int_{ty}^{y} \varepsilon(u)\frac{du}u\right), 
\end{align}
where $c,\eps$ are measurable functions such that $c(y)\to C$ (for some $C\in(0,\infty)$) and $\eps(y)\to 0$ as $y\searrow 0$.
Let $\delta>0$ such that $C-\delta>0$ and let $\gamma>0$ such that $\varrho+\gamma<1$. Since, $c(y)\to C$ as $y\searrow 0$, there is $y_0\in(0,1)$ such that $|c(y)-C|< \delta$ for all $y \in (0,y_0)$. Moreover, since $\eps(y)\to 0$, there is also some $y_1\in(0,y_0]$ such that $|\eps(y)|<\gamma$ for any $y\in(0,y_1)$.  Therefore, we obtain for any $y\in(0,y_1)$,
\begin{align}
\frac{\ell(y)}{\ell(ty)} &\leq \frac{C+\delta}{C-\delta}\exp \left( \int_{ty}^{y} |\varepsilon(u)|\frac{du}{u}  \right)
 \leq M \exp \left( \int_{ty}^{y} \gamma\frac{du}u  \right)=M t^{-\gamma},
\end{align}
 where we have set $M := \frac{C+\delta}{C-\delta}$. This implies $\ell(ty)/\ell(y)\geq M^{-1} t^{\gamma}$ and thus
 $$
 \frac{h(ty)}{h(y)}=t^\varrho \frac{\ell(ty)}{\ell(y)}\geq M^{-1} t^{\varrho+\gamma},
 $$
 for all $y\in(0,y_1)$ and all $t\in(0,1)$. Thus the assertion follows for $m:=M^{-1}$ and $\tau:=\varrho+\gamma$, which by the choice of $\gamma$ is strictly less than 1.
\end{proof}

Up to now, we have seen that any regularly varying function $h\in\sR_\varrho$ with $\varrho\in(0,1)$ satisfies conditions (H2) and (H3), by Theorem~\ref{theo:UCT} and Proposition~\ref{lemma:H3}, respectively. Concerning condition (H1), it is easy to see from Proposition~\ref{prop:1} that such $h$ also satisfy $\lim_{y\searrow 0}h(y)=0$ and $\lim_{y\searrow 0}{h(y)}/{y}=\infty$. The condition $\lim_{y\rightarrow \infty}h(y) = \infty$ is not always satisfied but it is not very relevant. One can easily extend or redefine $h$ on some interval $[x_0,\infty)$ bounded away from zero to meet this condition without affecting the asymptotic properties of $h$ at $0$.
On the other hand, regularly varying functions need neither be continuous nor strictly increasing. It will be our aim now to clarify that for each $h\in\sR_\varrho$ there is an asymptotically equivalent function with these two properties. It turns out that there are even smooth representatives for $h$.

\subsection*{Smoothly varying functions.}\label{section:RD}
 We will now discuss the classes $\sSR_\varrho$ 
 in more detail. We will show in particular the asymptotic equivalence of the classes $\sG_\varrho$ (cf.\ Definition \ref{def:gd}) and $\sSR_\varrho$, see Theorem \ref{thm:SR=G}.
Recall that for any $\varrho\in\R$, a regularly varying function $h\in \sR_\varrho$ is in the class $\sSR_\varrho$, if and only if there is some $y_0>0$ such that $h \in C^1(0,y_0]$. Similarly, we write $\sSR_\varrho[\infty]$ for the class of functions that are smoothly varying at $\infty$. 

By the following result, we may assume slowly varying functions $\ell\in \sR_0$ to be smooth modulo asymptotic equivalence, implying that the classes $\sSR_0$ and $\sR_0$ are asymptotically equivalent.

\begin{Satz}[{cf.\ \cite[Theorem 1.3.3]{BGT}}]\label{theo:1}
	Let $\ell\in \sR_0$. Then there is a smoothly varying function $\ell_1\in\sSR_0$ asymptotically equivalent to $\ell$  (i.e.\ $\ell_1(y)\sim \ell(y)$, as $y\searrow 0$). $\ell_1$ can be chosen in such a way that all derivatives of the function $p_1$ defined by $p_1(x) := \log(\ell_1(e^{-x})),  e^{-x}\in\dom(\ell_1)$ vanish, i.e., for all $n\in \N$,
 \begin{equation*}
	p_1^{(n)}(x)\to 0, \text{ as } x\rightarrow \infty.
	\end{equation*}
	
\end{Satz}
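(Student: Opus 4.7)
The plan is to reduce the statement to the corresponding classical Smooth Variation Theorem at infinity (\cite[Theorem 1.3.3]{BGT}) via the change of variables $y\mapsto 1/y$, exactly as was done in the proof of the Representation Theorem~\ref{theo:rep}.

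More precisely, I would proceed as follows. Given $\ell\in\sR_0$ with $\dom(\ell)=(0,y_0)$, define $\tilde\ell:(1/y_0,\infty)\to(0,\infty)$ by $\tilde\ell(x):=\ell(1/x)$. By Remark~\ref{rem:1}, $\tilde\ell\in\sR_0[\infty]$. Apply the Smooth Variation Theorem at $\infty$ to $\tilde\ell$: this yields a function $\tilde\ell_1\in\sSR_0[\infty]$ such that $\tilde\ell_1(x)\sim\tilde\ell(x)$ as $x\to\infty$ and such that, setting $\tilde p_1(x):=\log(\tilde\ell_1(e^x))$, one has $\tilde p_1^{(n)}(x)\to 0$ as $x\to\infty$ for every $n\in\N$.

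Now define the candidate $\ell_1(y):=\tilde\ell_1(1/y)$ on $(0,y_1]$ for some $y_1>0$ small enough that $\tilde\ell_1$ is $C^1$ on $[1/y_1,\infty)$. Since $y\mapsto 1/y$ is $C^\infty$ on $(0,\infty)$, we immediately get $\ell_1\in C^1(0,y_1]$; together with Remark~\ref{rem:1} and Lemma~\ref{lemma:8} (applied to see that $\ell_1$ inherits regular variation from the asymptotic equivalence $\ell_1(y)\sim\ell(y)$ derived below), this gives $\ell_1\in\sSR_0$. The asymptotic equivalence $\ell_1(y)\sim\ell(y)$ as $y\searrow 0$ follows from
\begin{equation*}
\frac{\ell_1(y)}{\ell(y)}=\frac{\tilde\ell_1(1/y)}{\tilde\ell(1/y)}\to 1,\quad\text{as } y\searrow 0,
\end{equation*}
by the substitution $x=1/y$. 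Finally, setting $p_1(x):=\log(\ell_1(e^{-x}))$ we compute
\begin{equation*}
p_1(x)=\log\bigl(\tilde\ell_1(e^{x})\bigr)=\tilde p_1(x),
\end{equation*}
so that $p_1^{(n)}(x)=\tilde p_1^{(n)}(x)\to 0$ as $x\to\infty$ for every $n\in\N$, which is exactly the claimed vanishing-of-derivatives property.

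I do not expect a genuine obstacle here: the entire content of the theorem is already encoded in \cite[Theorem 1.3.3]{BGT}, and the only thing to verify is that the change of variables $y\leftrightarrow 1/y$ preserves all relevant structure (smoothness, asymptotic equivalence, and the transformation $p_1\leftrightarrow\tilde p_1$). The mildest care is needed in matching the exponential variable: the paper's $p_1$ uses $e^{-x}$ (appropriate for $y\searrow 0$), whereas the at-infinity version uses $e^x$, but these are intertwined precisely by the reciprocal map, so the identity $p_1(x)=\tilde p_1(x)$ holds without any sign corrections on the derivatives.
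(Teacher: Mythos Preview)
Your proposal is correct and follows exactly the approach implicit in the paper: the paper does not give an explicit proof of Theorem~\ref{theo:1} but simply cites \cite[Theorem~1.3.3]{BGT}, relying on the general transfer principle of Remark~\ref{rem:1} (already carried out in detail for the Representation Theorem~\ref{theo:rep}). Your reduction via $y\mapsto 1/y$ is precisely this transfer, and the identity $p_1(x)=\tilde p_1(x)$ you verify is the one point that genuinely needs checking; one minor remark is that you can get $\ell_1\in\sR_0$ directly from Remark~\ref{rem:1} applied to $\tilde\ell_1\in\sR_0[\infty]$ rather than via Lemma~\ref{lemma:8}, but either route is fine.
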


Combining Theorem \ref{theo:1}  with Lemma~\ref{lem:CT}, we obtain the following result, which establishes in particular that the classes $\sR_\varrho$ and $\sSR_\varrho$ are asymptotically equivalent for any $\varrho\in\R$.

\begin{cor}\label{prop:3}
		Let $\varrho \in \R$ and $h \in \sR_\varrho$. Then there is a smoothly varying function $h_1 \in\sSR_\varrho$ asymptotically equivalent to $h$ (i.e.\ with $h_1(y)\sim h(y)$, as $y\searrow 0$). $h_1$ can be chosen in such a way that the function $p_1$ defined by $p_1(x) := \log(h_1(e^{-x}))$, $x\in\dom(h_1)$ satisfies
\begin{align*}
		\lim_{x\to\infty} p_1'(x)=-\varrho \quad \text{ and } \quad \lim_{x\to\infty} p_1^{(n)}(x)=0 		\end{align*}
		for each  $n\in \N, n\geq 2$. 
\end{cor}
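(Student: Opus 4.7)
The plan is to reduce the statement for regularly varying $h$ of arbitrary index $\varrho$ to the slowly varying case already handled by Theorem~\ref{theo:1}, using the multiplicative decomposition provided by Lemma~\ref{lem:CT}. Concretely, I would first apply Lemma~\ref{lem:CT} to write $h(y)=y^\varrho \ell(y)$ with $\ell\in\sR_0$, and then invoke Theorem~\ref{theo:1} to obtain a smoothly varying $\ell_1\in\sSR_0$ with $\ell_1(y)\sim\ell(y)$ as $y\searrow 0$ and such that the function $q_1(x):=\log(\ell_1(e^{-x}))$ has $q_1^{(n)}(x)\to 0$ as $x\to\infty$ for every $n\in\N$.

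Next, I would define the candidate by $h_1(y):=y^\varrho\ell_1(y)$ on a right neighborhood of $0$ where $\ell_1$ is $C^1$. Asymptotic equivalence $h_1(y)\sim h(y)$, as $y\searrow 0$, is immediate from $\ell_1\sim\ell$ by multiplying by $y^\varrho$, and by Lemma~\ref{lemma:8} this already forces $h_1\in\sR_\varrho$ (alternatively one checks regular variation directly from the factorization). Since $y\mapsto y^\varrho$ is $C^\infty$ on $(0,\infty)$ and $\ell_1\in C^1$ near $0$, we have $h_1\in C^1$ near $0$, i.e.\ $h_1\in\sSR_\varrho$.

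For the derivative conditions, the key computation is that
\begin{equation*}
p_1(x)=\log(h_1(e^{-x}))=\log(e^{-\varrho x}\ell_1(e^{-x}))=-\varrho x + q_1(x),
\end{equation*}
so that $p_1'(x)=-\varrho+q_1'(x)$ and $p_1^{(n)}(x)=q_1^{(n)}(x)$ for all $n\geq 2$. Applying the conclusion of Theorem~\ref{theo:1} that $q_1^{(n)}(x)\to 0$ as $x\to\infty$ for all $n\geq 1$, one immediately obtains $p_1'(x)\to-\varrho$ and $p_1^{(n)}(x)\to 0$ for $n\geq 2$, completing the proof.

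There is no real obstacle here; the statement is essentially a bookkeeping consequence of Lemma~\ref{lem:CT} combined with the slowly varying smoothing result Theorem~\ref{theo:1}. The only point that deserves a line of care is to make sure one works on a common right neighborhood of $0$ where both $h$ and $\ell_1$ are defined and $\ell_1$ is $C^1$, which is harmless since regularly varying functions can always be extended or restricted to a suitable interval $(0,y_0)$ without affecting the behaviour at $0$.
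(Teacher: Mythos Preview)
Your proof is correct and follows essentially the same route as the paper: decompose $h$ via Lemma~\ref{lem:CT}, apply Theorem~\ref{theo:1} to the slowly varying factor to get $\ell_1$, set $h_1(y)=y^\varrho\ell_1(y)$, and read off the derivative behaviour of $p_1$ from $p_1(x)=-\varrho x+\log(\ell_1(e^{-x}))$. The only difference is that you spell out a few routine verifications (membership in $\sSR_\varrho$, domain considerations) that the paper leaves implicit.
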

\begin{proof} By Lemma~\ref{lem:CT}, 
the function  $h$ can be written as $h(y) = y^\varrho \ell(y)$ for some $\ell\in\sR_0$. Theorem \ref{theo:1} implies the existence of a smoothly varying function $\ell_{1}\in\sSR_0$ asymptotically equivalent to $\ell$. Defining $h_1$ by $h_1(y) := y^\varrho \ell_{1}(y), y>0$, we obtain
\begin{equation*}
p_1(x) =\log(h_1(e^{-x})) = \log(e^{-\varrho x}\ell_{1}(e^{-x})) = -\varrho x +\log (\ell_{1}(e^{-x})).
\end{equation*}
By Theorem \ref{theo:1}, all derivatives of the function $x\mapsto\log (\ell_{1}(e^{-x})), x>0$ tend to zero, as $x\rightarrow \infty$, which yields the assertion.
\end{proof}

The next statement characterizes the asymptotic behavior of the the derivative of a smoothly varying function. It is not only needed in the proof of the subsequent proposition but it will also be extremely useful in the proofs in Section~\ref{sec:proofs1}.
The expression $E_h(y):=\frac{y\, h'(y)}{h(y)}$ is known as the \emph{elasticity} of a function $h$ at $y$.

\begin{Lemma} \label{lem:elasticity}
For any $\varrho\in\R$ and any function $h\in\sSR_\varrho$, the elasticity of $h$ at $y$ converges to $\varrho$ as $y\searrow 0$, i.e.
  \begin{equation}\label{eq:M-S-eq}
	\lim_{y\searrow 0} \frac{y h'(y)}{h(y)} =\varrho.
	\end{equation}
For $\varrho\neq 0$, this can be rephrased as $h'(y)\sim \varrho\, h(y)/y$, as $y\searrow 0$.
\end{Lemma}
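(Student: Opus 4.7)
The strategy is to reduce to the slowly varying case and then use the $C^1$-smoothness together with the Representation Theorem to control the logarithmic derivative.

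First, by Lemma~\ref{lem:CT}, I write $h(y) = y^\varrho\ell(y)$ with $\ell \in \sR_0$. Since both $h$ and $y\mapsto y^\varrho$ are $C^1$ on a right neighborhood of $0$, so is $\ell$, and hence $\ell \in \sSR_0$. A direct computation gives
\[
\frac{yh'(y)}{h(y)} = \varrho + \frac{y\ell'(y)}{\ell(y)},
\]
which reduces the claim to showing $\lim_{y\searrow 0} y\ell'(y)/\ell(y) = 0$ for every $\ell \in \sSR_0$.

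Next, I pass to logarithmic coordinates by setting $p(x) := \log\ell(e^{-x})$. Then $p$ is $C^1$ and $p'(x) = -y\ell'(y)/\ell(y)$ where $y = e^{-x}$. Slow variation of $\ell$ translates into $p(x+s) - p(x) \to 0$ as $x \to \infty$ for each fixed $s$, and the target becomes $p'(x) \to 0$. To pass from the asymptotic behavior of the finite differences to a pointwise statement on the derivative, I would invoke the Representation Theorem~\ref{theo:rep} to write $\ell(y) = c(y)\exp(\int_y^a \eps(u)/u\,du)$ with $c(y)\to C>0$ and $\eps(y)\to 0$. The $C^1$-smoothness of $\ell$ can be exploited to select a representation in which $c$ is itself $C^1$ with $yc'(y)/c(y)\to 0$ (this is precisely the content of the smoothing step underlying Theorem~\ref{theo:1}). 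Differentiating the representation then gives
\[
\frac{y\ell'(y)}{\ell(y)} = \frac{yc'(y)}{c(y)} - \eps(y) \longrightarrow 0,
\]
as $y\searrow 0$, which is the desired conclusion. The reformulation $h'(y)\sim \varrho h(y)/y$ for $\varrho\neq 0$ is then immediate from $yh'(y)/h(y)\to\varrho$ and the definition of $\sim$.

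The main obstacle is the last step: extracting from the $C^1$-regularity of $\ell$ a representation in which the ``constant factor'' $c$ is smooth and has vanishing logarithmic derivative, so that the only remaining contribution to $y\ell'(y)/\ell(y)$ comes from $\eps$ (which already vanishes by the Representation Theorem). This is exactly the place where the $\sSR$-hypothesis, as opposed to mere regular variation, is used in an essential way, and the argument mirrors the smoothing construction already employed in the proof of Theorem~\ref{theo:1}.
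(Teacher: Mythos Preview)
Your reduction to the slowly varying case is correct and matches the paper's first step exactly. The gap is in your treatment of $\ell\in\sSR_0$ via the Representation Theorem. You claim that the $C^1$-smoothness of $\ell$ allows one to \emph{select} a representation $\ell(y)=c(y)\exp\bigl(\int_y^a\eps(u)\,du/u\bigr)$ in which $c$ is $C^1$ with $yc'(y)/c(y)\to 0$, and you appeal to the smoothing construction behind Theorem~\ref{theo:1}. But that theorem manufactures a \emph{new} function $\ell_1\sim\ell$ with the vanishing-logarithmic-derivative property; it says nothing about the given $\ell$. Concretely, once any measurable $\eps$ with $\eps(y)\to 0$ has been fixed from the Representation Theorem and one sets $c(y):=\ell(y)\big/\exp\bigl(\int_y^a\eps(u)\,du/u\bigr)$, a direct differentiation yields
\[
\frac{yc'(y)}{c(y)}=\frac{y\ell'(y)}{\ell(y)}+\eps(y),
\]
so the assertion $yc'(y)/c(y)\to 0$ is \emph{equivalent} to the conclusion $y\ell'(y)/\ell(y)\to 0$ you are trying to prove. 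The argument is circular at precisely the step you flag as the main obstacle; the $\sSR$-hypothesis has not actually been used.

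The paper takes an entirely different route for the case $\varrho=0$: it writes $y h'(y)/h(y)$ as the iterated limit $\lim_{y\searrow 0}\lim_{t\to 1}\dfrac{h(ty)-h(y)}{(t-1)h(y)}$ via the substitution $u=y(t-1)$ in the difference quotient, and then interchanges the order of the two limits, invoking the Uniform Convergence Theorem~\ref{theo:UCT} for the justification. No appeal to the Representation Theorem or to Theorem~\ref{theo:1} is made.
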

\begin{proof}
It is enough to show the assertion for slowly varying functions. Indeed, for $h\in\sSR_\varrho$, there is, by Lemma~\ref{lem:CT}, some function $\ell\in\sSR_0$ such that $h(y)=y^\varrho\ell(y)$ on $\dom(h)$ and therefore
$$
E_h(y)=\frac{y\,h'(y)}{h(y)}=\frac{y\,(y^\varrho\ell(y))'}{y^\varrho\ell(y)}=\frac{\varrho y^\varrho\ell(y) +y^{\varrho+1}\ell'(y) }{y^\varrho\ell(y)}=\varrho+\frac{y\,\ell'(y)}{\ell(y)}=\varrho+ E_\ell(y).
$$
That is, the elasticities of $h$ and $\ell$ differ pointwise by $\varrho$ and so in particular, as $y\searrow 0$, $E_h(y)\to\varrho$ if and only if $E_\ell(y)\to 0$.

So let $h\in\sSR_0$.  For any fixed $y\in\dom(h)$, we apply the linear substitution $u(t)=yt-y$ in the differential quotient of $h$ at $y$ and obtain
\begin{align*}
\lim_{y\searrow 0} \lim_{u\searrow 0} \frac{h(y+u)-h(y)}{u}\frac{y}{h(y)} &=\lim_{y\searrow 0} \lim_{t\rightarrow 1}\frac{h(ty)-h(y)}{y(t-1)h(y)} y\\ &= \lim_{t\rightarrow 1}\frac{1}{t-1}\lim_{y\searrow 0} \frac{h(ty)-h(y)}{h(y)}=0,
\end{align*}
where in the last expression we have changed the order of the limits. This is justified, since, by the Uniform Convergence Theorem \ref{theo:UCT}, the inner limit in this last expression vanishes uniformly in $t$ on any compact interval. (So choose one containing $1$ in its interior.) It follows that also the outer limit (as $t\to 1$) of this expression exists (and equals 0), implying that it is safe to interchange the order of the limits.
\end{proof}
Our next observation is that smoothly varying functions are monotone near zero. One of the consequences is that the derivatives of smoothly varying functions are again regularly varying.
\begin{Proposition}\label{lemma:essgauge}  \label{prop:smooth-monotone}
   Let $\varrho\neq 0$ and $h \in \sSR_\varrho$. 
  If $\varrho>0$, then $h$ is strictly increasing in some right neighborhood of $0$, and if $\varrho<0$, then $h$ is strictly decreasing in some right neighborhood of $0$.
  In particular, the function $\varrho^{-1} h'$ (when restricted to this neighborhood) is in the class $\sR_{\varrho-1}$.
\end{Proposition}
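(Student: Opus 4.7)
The proof reduces almost entirely to an application of Lemma~\ref{lem:elasticity}, so there is no real obstacle; the plan is to unpack the consequences of the elasticity relation $\lim_{y\searrow 0} y h'(y)/h(y) = \varrho$ and combine them with Lemma~\ref{lemma:8}.

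First I would fix $h\in\sSR_\varrho$ with $\varrho\neq 0$ and apply Lemma~\ref{lem:elasticity} to get some $y_0>0$ (with $h\in C^1(0,y_0]$) on which the elasticity $E_h(y)=y h'(y)/h(y)$ differs from $\varrho$ by at most $|\varrho|/2$. Since $h(y)>0$ and $y>0$ on this neighborhood, $h'(y)$ has the same sign as $E_h(y)$, and hence the same sign as $\varrho$. Concretely, for $\varrho>0$ we get $h'(y)\geq \varrho h(y)/(2y)>0$, so $h$ is strictly increasing on $(0,y_0]$; for $\varrho<0$ we get $h'(y)\leq \varrho h(y)/(2y)<0$, so $h$ is strictly decreasing on $(0,y_0]$. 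Either way, $\varrho^{-1}h'$ is a positive $C^0$-function on $(0,y_0]$, so it is a legitimate gauge function on this right neighborhood of $0$.

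To obtain the regular variation statement, I would rewrite the second form of Lemma~\ref{lem:elasticity}, namely $h'(y)\sim \varrho\, h(y)/y$ as $y\searrow 0$, as
\begin{equation*}
\varrho^{-1} h'(y) \;\sim\; \frac{h(y)}{y}, \quad \text{as } y\searrow 0.
\end{equation*}
The right-hand side is the product of $h\in\sR_\varrho$ and $y\mapsto y^{-1}\in\sR_{-1}$, and a direct check from the definition (or the elementary product rule for regularly varying functions) shows that $y\mapsto h(y)/y$ belongs to $\sR_{\varrho-1}$. Finally, since $\varrho^{-1} h'$ is measurable (indeed continuous on $(0,y_0]$) and asymptotically equivalent to a function in $\sR_{\varrho-1}$, Lemma~\ref{lemma:8} yields $\varrho^{-1} h' \in \sR_{\varrho-1}$, completing the proof.
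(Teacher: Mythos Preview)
Your proof is correct and follows essentially the same route as the paper: both use Lemma~\ref{lem:elasticity} to bound $h'$ by positive (resp.\ negative) multiples of $h(y)/y$, deduce strict monotonicity, and then combine the asymptotic equivalence $\varrho^{-1}h'(y)\sim h(y)/y$ with the fact that $y\mapsto h(y)/y\in\sR_{\varrho-1}$ to conclude. Your citation of Lemma~\ref{lemma:8} for the last step is in fact the appropriate one (the paper cites Lemma~\ref{lemma:7} there, which appears to be a slip).
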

\begin{proof}
   Let $\varrho\neq 0$ and $h\in\sSR_\varrho$. Fix some $\epsilon\in(0,1)$ and let $(0,y_0)$ be an interval in which $h$ is differentiable. Then \eqref{eq:M-S-eq} in Lemma~\ref{lem:elasticity} implies that there is some $y_1\in(0,y_0]$ such that for each $y\in(0,y_1)$,
   $$
   (1-\epsilon)\varrho h(y)/y<h'(y)<(1+\epsilon)\varrho h(y)/y.
   $$
   For $\varrho>0$, the expression on the left is strictly positive, which means in particular that $h'$ is strictly positive in the interval $(0,y_1)$. Hence the function $h$ is strictly increasing in $(0,y_0)$. For $\varrho<0$, the expression on the right is strictly negative, which implies that $h'$ is strictly negative in  $(0,y_1)$. Hence $h$ is strictly decreasing in $(0,y_1)$.
   For the last assertion note that $\varrho^{-1} h'$ is positive in the interval $(0,y_1)$. Moreover, by Lemma~\ref{lem:elasticity}, we have $\varrho^{-1} h'\sim h(y)/y$, as $y\searrow 0$. Therefore, $\varrho^{-1} h'\in\sR_{\varrho-1}$ follows from Lemma~\ref{lemma:7}, since the function $y\mapsto h(y)/y$ is in $\sR_{\varrho- 1}$.
   This completes the proof.
\end{proof}
 We have gathered all the results necessary in order to show the asymptotic equivalence of the classes $\sSR_\varrho$ and $\sG_\varrho$.
	\begin{Satz} \label{thm:SR=G}
Let $\varrho\in (0,1)$.
For any function $h\in \sG_\varrho$, there is a smoothly varying function $\tilde h \in \sSR_\varrho$ asymptotically equivalent to $h$, 
and vice versa. Hence, the classes $\sG_\varrho$ and $\sSR_\varrho$ are asymptotically equivalent.	
\end{Satz}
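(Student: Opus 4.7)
The proof naturally splits into two directions, and for both the heavy lifting has already been done in the earlier parts of the section.

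For the direction $\sG_\varrho\rightsquigarrow\sSR_\varrho$, I would observe that any $h\in\sG_\varrho$ is continuous by (H1), hence measurable, and satisfies $\lim_{y\searrow 0} h(ty)/h(y)=t^\varrho$ by (H2); thus $h\in\sR_\varrho$. Corollary~\ref{prop:3} then produces an asymptotically equivalent $\tilde h\in\sSR_\varrho$, which is exactly what is needed.

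For the reverse direction $\sSR_\varrho\rightsquigarrow\sG_\varrho$, the plan is to show that an arbitrary $h\in\sSR_\varrho$ already satisfies the local content of (H1), (H2), (H3), so that only a cosmetic modification away from $0$ is required. Indeed, (H2) is automatic on $\sR_\varrho\supset\sSR_\varrho$ by the Uniform Convergence Theorem~\ref{theo:UCT}, and (H3) holds for every function in $\sR_\varrho$ with $\varrho\in(0,1)$ by Proposition~\ref{lemma:H3}. For (H1), continuity of $h$ follows from its $C^1$-smoothness on some $(0,y_1]$; strict monotonicity on such a neighborhood (shrinking $y_1$ if necessary) is provided by Proposition~\ref{prop:smooth-monotone} since $\varrho>0$; and the boundary behaviours $\lim_{y\searrow 0}h(y)=0$ and $\lim_{y\searrow 0}h(y)/y=\infty$ follow from Proposition~\ref{prop:1}, applied to $h\in\sR_\varrho$ with $\varrho>0$ and to $y\mapsto h(y)/y\in\sR_{\varrho-1}$ with $\varrho-1<0$, respectively.

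The only obstruction is the global condition $\lim_{y\to\infty}h(y)=\infty$, which $h$ need not satisfy (its domain might even be a bounded interval around $0$). I would remove this obstruction by defining
\begin{equation*}
  \tilde h(y):=\begin{cases} h(y), & 0<y\le y_1,\\ h(y_1)+(y-y_1), & y>y_1,\end{cases}
\end{equation*}
which is continuous, strictly increasing on all of $(0,\infty)$, coincides with $h$ on $(0,y_1]$, and tends to $\infty$ at $\infty$. Since the modification only occurs on a set bounded away from $0$, the asymptotic properties at $0$ are untouched, so $\tilde h(y)\sim h(y)$ as $y\searrow 0$ and (H1), (H2), (H3) all transfer from $h$ to $\tilde h$. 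Hence $\tilde h\in\sG_\varrho$, completing the proof. The main ``obstacle'' is thus really only bookkeeping: recognising that (H1)'s global hypothesis at $\infty$ is entirely cosmetic and can be arranged by a trivial extension once the genuine regularity (Theorem~\ref{theo:UCT}), the one-sided polynomial lower bound (Proposition~\ref{lemma:H3}), and the monotonicity near $0$ (Proposition~\ref{prop:smooth-monotone}) are in place.
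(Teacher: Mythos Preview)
Your proof is correct and follows essentially the same approach as the paper: the forward direction via Corollary~\ref{prop:3}, the reverse direction by checking (H2) via Theorem~\ref{theo:UCT}, (H3) via Proposition~\ref{lemma:H3}, the local parts of (H1) via Proposition~\ref{prop:smooth-monotone} and Proposition~\ref{prop:1}, and finally the same linear extension $\tilde h(y)=h(y_1)+(y-y_1)$ beyond $y_1$ to force $\tilde h(y)\to\infty$.
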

\begin{proof} 
 Let $\varrho\in (0,1)$ and $h\in \sG_\varrho$. By condition (H2), $h$ is regularly varying (at $0$), and therefore, by Corollary~\ref{prop:3}, there exists some function $\tilde h \in \sSR_\varrho$ such that $\tilde h(y)\sim h(y)$, as $y\searrow 0$.

To show the converse, let $\tilde h\in \sSR_\varrho$.  Then $\tilde h$ satisfies hypothesis (H2) by the Uniform Convergence Theorem \ref{theo:UCT}. Furthermore, $\tilde h$ satisfies (H3) by Proposition~\ref{lemma:H3}. It remains to show that the function $\tilde h$  satisfies hypothesis (H1) modulo asymptotic equivalence.

The function $\tilde h$ is by definition $C^1$-smooth and therefore in particular continuous on some interval $(0,y_0]$ for some $y_0>0$. By Proposition~\ref{prop:smooth-monotone}, $\tilde h$ is also strictly increasing on the interval $(0,y_1]$, for some $y_1\in(0,a]$. Therefore, $\tilde h$ can now easily be extended or redefined on the interval $(y_1,\infty)$ in such a way that it becomes continuous and strictly increasing on $(0,\infty)$ and satisfies $\lim_{y\to\infty} \tilde h(y)=\infty$ (e.g.\ by setting $h(y) := (y-y_1) + \tilde h(y_1)$ for $y>y_1$ and $h(y):=\tilde h(y)$ on $(0,y_1]$). Note that this will not affect the asymptotic properties at $0$, i.e.\ for the new function $h$, we have  $h(y)\sim\tilde h(y)$ as $y\searrow 0$ and $h\in\sR_\varrho$. In particular, $h$ still satisfies (H1) and (H2).
By the Lemma~\ref{lem:CT}, we have the representation $h(y) = y^\varrho \ell(y)$ for some  $\ell \in \sR_0$ 
and so it is easy to see from Proposition~\ref{prop:1} that
\begin{equation*}
\lim_{y\searrow 0} \frac{ h(y)}{y} =
\lim_{y\searrow 0} y^{\varrho-1} \ell(y) = \infty \quad \text{ and }
 \quad \lim_{y\searrow 0} h(y) = 0,
\end{equation*}
since $\varrho\in(0,1)$. Therefore, we have found a function $h$, asymptotically equivalent to $\tilde h$, satisfying all the conditions of the class $\sG_\varrho$. This completes the proof.
\end{proof}

Combining Corollary~\ref{prop:3} and Theorem~\ref{thm:SR=G}, we conclude that for any $\varrho\in(0,1)$ the three classes $\sR_\varrho$, $\sSR_\varrho$ and $\sG_\varrho$ are asymptotically equivalent, hence Theorem~\ref{theo:classes} is proved. Regular variation is indeed the essential property which makes things work.
 Therefore, it is only natural to expect that in Theorem~\ref{theo:1.2} the class $\sG_{1-D}$  can be substituted with the class $\sR_{1-D}$. However, in order to do this, one needs to clarify that the auxiliary functions $f$ and $g$ associated in Theorem~\ref{theo:1.2} to any $h\in\sG_{1-D}$ have a well defined counterpart for any $h\in\sR_{1-D}$. This could be achieved by using the concept of generalized asymptotic inverses (see e.g.~\cite[\S 1.5.7]{BGT}), 
 which, however, would impose additional technical difficulties. The latter can be avoided by restricting to smoothly varying functions, which, due to the asymptotic equivalence of the classes $\sR_{1-D}$ and $\sSR_{1-D}$, is rather a convenience than a restriction.
  In fact, we will not only see that the functions $f$ and $g$ are well-defined for any $h\in\sSR_{1-D}$, but also that they inherit from $h$ regular variation and smoothness.

 For this purpose we let $D\in(0,1)$ and $h\in\sSR_{1-D}$ (with $1-D\in(0,1)$). We define the function $H$ by
 $$
 H(y):= y/h(y), y\in\dom(h)
 $$
 and observe that  $H \in \sSR_{D}$. By Proposition~\ref{prop:smooth-monotone}, $H$ is strictly increasing on some interval $(0,y_0)$ and therefore it can be inverted on this interval. The inverse function $H^{-1}$ is then well-defined on the interval $(0,H(y_0))$ (and may be extended beyond this interval in some arbitrary way, if needed). $H^{-1}$ inherits the properties of smoothness and strict monotonicity from $H$. Moreover, observing that for any $z\in(0,H(y_0))$ there is a unique $y\in(0,y_0)$ such that $z=H(y)$ (and that $z$ depends continuously on $y$), we obtain for any $t>0$,
 $$
 \lim_{z\searrow 0} \frac{H^{-1}(tz)}{H^{-1}(z)}
 =\lim_{y\searrow 0}\frac{H^{-1}(t H(y))}{H^{-1}(H(y))}=\lim_{y\searrow 0}\frac{H^{-1}(H(t^{1/D}y))}{H^{-1}(H(y))}=t^{1/D},
 $$
 where we have used for the second equality that $H\in\sR_{D}$. Hence, we have shown that $H^{-1}$ is smoothly varying with index $1/D$, i.e.\ $H^{-1}\in\sSR_{1/D}$. We are now ready to define the functions $f$ and $g$.

\begin{Proposition} \label{prop:g-and-f}
  Let $D\in(0,1)$ and $h\in\sSR_{1-D}$. Then there is some $x_0\geq 0$ such that the functions
  $g$ and $f$ are well-defined  for any $x\in[x_0,\infty)$ by
$$
  g(x):=H^{-1}(1/x) \quad \text{ and } \quad f(x):=\frac 1{H(1/x)}=x h(1/x),
  $$
  and $C^1$-smooth on this interval. Moreover, $g\in\sSR_{-1/D}[\infty]$ and  $f\in\sSR_{D}[\infty]$.
\end{Proposition}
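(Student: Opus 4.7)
The plan is to observe that almost all the groundwork has been laid in the paragraph immediately preceding the statement, so the proof reduces to choosing an appropriate $x_0$ and converting regular variation at $0$ into regular variation at $\infty$ via the substitution $y=1/x$, as in Remark~\ref{rem:1}.

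First I would fix $y_0>0$ so small that $h\in C^1(0,y_0]$ and $H$ is strictly increasing on $(0,y_0)$ with non-vanishing derivative: logarithmic differentiation of $H(y)=y/h(y)$ gives $H'(y)=H(y)(1-E_h(y))/y$, and Lemma~\ref{lem:elasticity} yields $E_h(y)\to 1-D<1$, so $H'(y)>0$ near $0$. The inverse function theorem then guarantees that $H^{-1}$ is well-defined and $C^1$ on $(0,H(y_0))$. Setting $x_0:=\max\{1/y_0,\,1/H(y_0)\}$ ensures that for every $x\geq x_0$ the argument $1/x$ lies in the common domain of $h$ and $H^{-1}$, so that both $f(x)=xh(1/x)$ and $g(x)=H^{-1}(1/x)$ are defined; their $C^1$-smoothness is then immediate from the chain rule.

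Next I would verify regular variation at infinity by two short direct computations. For $f$, using $h\in\sR_{1-D}$ and the fact that $1/x\searrow 0$ as $x\to\infty$,
\begin{equation*}
\frac{f(tx)}{f(x)}=t\cdot\frac{h(t^{-1}(1/x))}{h(1/x)}\longrightarrow t\cdot t^{-(1-D)}=t^D,
\end{equation*}
so $f\in\sR_D[\infty]$ and hence $f\in\sSR_D[\infty]$. For $g$, the paragraph preceding the proposition already established $H^{-1}\in\sSR_{1/D}$, so the same substitution gives
\begin{equation*}
\frac{g(tx)}{g(x)}=\frac{H^{-1}(t^{-1}(1/x))}{H^{-1}(1/x)}\longrightarrow t^{-1/D},
\end{equation*}
yielding $g\in\sSR_{-1/D}[\infty]$.

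There is no real obstacle here; the only point worth being careful about is the choice of $x_0$ so that $1/x$ lies in the domains of both $h$ and $H^{-1}$ and that $H^{-1}$ is genuinely $C^1$ (which is why one needs the non-vanishing of $H'$ rather than mere strict monotonicity from Proposition~\ref{prop:smooth-monotone}). After this, the two regular-variation claims reduce to direct substitutions, the index flipping sign exactly as predicted by Remark~\ref{rem:1}.
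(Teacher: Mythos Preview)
Your proof is correct and follows essentially the same route as the paper: choose $y_0$ so that $H$ is $C^1$ and strictly increasing, set $x_0=\max\{1/y_0,1/H(y_0)\}$, and then convert regular variation at $0$ of $H^{-1}$ and $1/H$ into regular variation at $\infty$ of $g$ and $f$ via the substitution of Remark~\ref{rem:1}. Your explicit elasticity computation $H'(y)=H(y)(1-E_h(y))/y$ to ensure $H'\neq 0$ (and hence that $H^{-1}$ is genuinely $C^1$) is a welcome extra detail that the paper leaves implicit in the phrase ``inherits the properties of smoothness and strict monotonicity from $H$''.
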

\begin{proof}
  By the considerations above, $H^{-1}$ is well-defined on $(0,H(y_0)]$, where $y_0$ is chosen such that $h$ (and thus $H$) is strictly increasing and $C^1$-smooth on $(0,y_0]$. Letting $x_g:= 1/H(y_0)$, we have $1/x\in (0, H(y_0)]$ for any $x\in [x_g,\infty)$, and so $H^{-1}(1/x)$ and thus $g(x)$ are well-defined and smooth on $[x_g,\infty)$.
  Since $H^{-1}\in\sR_{1/D}$, it follows directly from Remark~\ref{rem:1} that $g\in\sR_{-1/D}[\infty]$.

  Similarly, $f$ is well-defined and $C^1$ on $[x_f,\infty)$ for $x_f:=1/y_0$ whenever $H$ is $C^1$ on $(0,y_0]$. Moreover, $y\mapsto 1/H(y)$ is regularly varying (at $0$) with index $-D$, since $H$ has index $D$. Thus, again by Remark~\ref{rem:1}, we obtain $f\in\sSR_D[\infty]$.
  (The actual assertion of the statement is satisfied for $x_0:=\max\{x_g,x_f\}$, however, we will not need a common interval for $g$ and $f$ in our applications.)
\end{proof}

 In the proof of Theorem \ref{theo:1.2} given in \cite{HeLap2}, hypothesis (H3) is a technical assumption in order to leisurely apply Lebesgue dominated convergence. Karamata theory, however, allows to circumvent this kind of reasoning due to, among others, the following powerful result, known as Karamata's Theorem. As we will only need a version for functions regularly varying at $\infty$, we directly restate the corresponding result in \cite{BGT} for this class of functions.

\begin{Satz}[Karamata's Theorem; direct half {\cite[Theorem 1.5.11]{BGT}}]\label{theo:kara1}

	Let $\varrho\in \R$ and $f \in \sR_\varrho[\infty]$. Let further  $X>0$ such that $f$ is locally bounded in $[X,\infty)$, cf.\ Cor.~\ref{lemma:1}. Then
	\begin{enumerate}
		\item [$(i)$] For any $\sigma \geq -(\varrho+1)$
		\begin{equation*}
		x^{\sigma+1}f(x)\left/\int_{X}^{x}u^\sigma f(u)du \rightarrow \sigma+\varrho+1 \text{, as } x\rightarrow \infty. \right.
		\end{equation*}
		\item [$(ii)$]For any $\sigma < -(\varrho+1)$ $($and for $\sigma = -(\varrho+1)$ if $\int_\cdot^\infty u^{-(\varrho+1)}f(u)du <\infty)$
		\begin{equation*}
		x^{\sigma+1} f(x)\left/\int_{x}^{\infty}u^\sigma f(u)du\rightarrow -(\sigma+\varrho+1) \text{, as } x\rightarrow \infty. \right.
		\end{equation*}
	\end{enumerate}
\end{Satz}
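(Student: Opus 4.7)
The plan is to reduce both statements to a single calculation after a change of variables, then apply the Uniform Convergence Theorem (Theorem~\ref{theo:UCT}, but in its version at $\infty$) together with a domination argument. For part (i), I would substitute $u = xt$ in the integral to write
\begin{equation*}
  \frac{x^{\sigma+1} f(x)}{\int_X^x u^\sigma f(u)\,du}
  = \frac{1}{\int_{X/x}^{1} t^\sigma \dfrac{f(xt)}{f(x)}\,dt}.
\end{equation*}
For part (ii) the analogous substitution in $\int_x^\infty u^\sigma f(u)\,du$ yields
\begin{equation*}
  \frac{x^{\sigma+1} f(x)}{\int_x^\infty u^\sigma f(u)\,du}
  = \frac{1}{\int_1^\infty t^\sigma \dfrac{f(xt)}{f(x)}\,dt}.
\end{equation*}
In either case, since $f\in\sR_\varrho[\infty]$, the integrand converges pointwise to $t^{\sigma+\varrho}$ as $x\to\infty$, and a direct computation shows $\int_0^1 t^{\sigma+\varrho}\,dt = 1/(\sigma+\varrho+1)$ in case (i) and $\int_1^\infty t^{\sigma+\varrho}\,dt = -1/(\sigma+\varrho+1)$ in case (ii). Thus the desired limits $\sigma+\varrho+1$ and $-(\sigma+\varrho+1)$ would follow provided one can pass the limit inside the integral.

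To justify the exchange, I would establish a Potter-type uniform bound as an intermediate step: for every $\delta>0$ there exist $A>0$ and $X_\delta \geq X$ such that
\begin{equation*}
  \frac{f(xt)}{f(x)} \leq A\max\bigl\{t^{\varrho+\delta},\, t^{\varrho-\delta}\bigr\}, \quad x\geq X_\delta,\ xt\geq X_\delta.
\end{equation*}
This is a consequence of the Representation Theorem~\ref{theo:rep} (in its version at infinity): writing $f(x)=x^\varrho \ell(x)$ with $\ell\in\sR_0[\infty]$ and using the representation $\ell(x)=c(x)\exp(\int_a^x \eps(u)/u\,du)$ with $\eps\to 0$, one obtains the Potter bound by splitting into the cases $t\geq 1$ and $t<1$ and estimating $|\eps|\leq\delta$ for large arguments. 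The local boundedness of $f$ on $[X,\infty)$ (Corollary~\ref{cor:9}) handles the remaining compact pieces of the integration interval.

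With this bound in hand, choose $\delta>0$ small enough that $t^\sigma \max\{t^{\varrho+\delta}, t^{\varrho-\delta}\}$ is integrable on the relevant domain of integration. In part (i) the condition $\sigma>-(\varrho+1)$ makes $t^{\sigma+\varrho-\delta}$ integrable near $0$ for small $\delta$, and the domain of integration is bounded above by $1$; in part (ii) the condition $\sigma<-(\varrho+1)$ makes $t^{\sigma+\varrho+\delta}$ integrable at $\infty$. The Lebesgue dominated convergence theorem then yields the claimed limits. The boundary case $\sigma=-(\varrho+1)$ in (ii) requires a small separate argument: here the dominated convergence setup fails at $t\to\infty$, so I would exploit the additional hypothesis $\int_\cdot^\infty u^{-(\varrho+1)}f(u)\,du<\infty$ by truncating at a large $T$ and estimating the tail directly using Potter bounds with $\delta$ small.

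The main obstacle is the Potter-bound/domination step. Uniform convergence of $f(xt)/f(x)\to t^\varrho$ is guaranteed only on compact subsets of $(0,\infty)$ by Theorem~\ref{theo:UCT}, but the integration in (i) extends down to $t=X/x\to 0$ and in (ii) up to $t\to\infty$. Without the quantitative Potter control, dominated convergence cannot be applied and the pointwise limit inside the integral is not legitimate. Once the Potter bound is in place, the rest is a routine application of dominated convergence and elementary integration of power functions.
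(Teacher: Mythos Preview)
The paper does not prove this statement; it is quoted verbatim from \cite[Theorem 1.5.11]{BGT} and used as a black box. Your proposed argument via the substitution $u=xt$, Potter bounds from the Representation Theorem, and dominated convergence is the standard proof (essentially the one in \cite{BGT}), and it is correct for the strict-inequality ranges $\sigma>-(\varrho+1)$ in (i) and $\sigma<-(\varrho+1)$ in (ii).

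One small gap: you address the boundary case $\sigma=-(\varrho+1)$ in (ii) but not in (i). When $\sigma=-(\varrho+1)$ the limit in (i) is supposed to be $0$, yet your dominated-convergence setup fails there too, since $\int_0^1 t^{\sigma+\varrho}\,dt=\int_0^1 t^{-1}\,dt$ diverges. Writing $f(x)=x^\varrho\ell(x)$ with $\ell$ slowly varying, the claim in this case becomes $\ell(x)\big/\int_X^x u^{-1}\ell(u)\,du\to 0$, which is a separate (standard) fact about slowly varying functions and is not covered by your domination argument. This is a minor omission, easily fixed, but worth noting since the statement explicitly includes $\sigma\geq-(\varrho+1)$.
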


One of the most interesting consequences of Karamata's Theorem \ref{theo:kara1} in the context of regularly varying gauge functions is that we may take slowly varying functions $\ell\in \sR_0[\infty]$ out of  integrals in the following fashion: 
\begin{equation}\label{eq:out-of-int}
\int_{x}^{\infty} u^\varrho \ell(u)du\sim \ell(x)\int_{x}^{\infty}u^\varrho du, \text{ as } x\to\infty,
\end{equation}
whenever $\varrho<-1$ (and this is the case we will need). Indeed, this follows easily from part (ii) of Karamata's Theorem. Moreover, this theorem provides the following useful relations. We point out that statements similar to (i) and (ii) below have been proved and used by He and Lapidus for functions related to the classes $\sG_\varrho$, see \cite[Proposition 3.2 and Lemma 3.3]{HeLap2}.

\begin{Proposition}\label{prop:quotg}
	Let $\varrho<-1$ and $g\in \sR_{\varrho}[\infty]$. Then
	\begin{enumerate}
\item[(i)] 
	 $\displaystyle \int_{x}^{\infty}g(u)du\sim  -\frac{1}{\varrho+1}xg(x)$, $\text{ as } x\to \infty$;
		\item [(ii)] for any $t>0$,	
\begin{equation*}
	\frac{\int_{tx}^{\infty}g(u)du}{\int_{x}^{\infty}g(u)du} \to t^{\varrho+1}, \text{ as } x\rightarrow \infty,
	\end{equation*}
	where the convergence is uniform in $t$ on any compact subset of $(0,\infty)$;
	\item [(iii)]
$\displaystyle
\sum_{j=k}^\infty g(j)\sim -\frac{1}{\varrho+1}kg(k), \text{ as } k \to \infty.
$
	\end{enumerate}
\end{Proposition}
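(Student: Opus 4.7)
\emph{Strategy and part (i).} The three parts will be proved in sequence, each bootstrapping on the previous. Part (i) is a direct application of Karamata's Theorem~\ref{theo:kara1}(ii) with $\sigma=0$: the required hypothesis $\sigma<-(\varrho+1)$ is exactly the standing assumption $\varrho<-1$, and the convergence of $\int^{\infty} g(u)\,du$ is guaranteed by the representation $g(x)=x^{\varrho}\ell(x)$ (Lemma~\ref{lem:CT}) together with Potter-type bounds on the slowly varying factor, while local boundedness on $[X,\infty)$ follows from the $\infty$-analogue of Corollary~\ref{cor:9}. Theorem~\ref{theo:kara1}(ii) then delivers
\begin{equation*}
    \frac{x\,g(x)}{\int_{x}^{\infty} g(u)\,du}\;\longrightarrow\;-(\varrho+1),\quad x\to\infty,
\end{equation*}
which rearranges to (i).

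\emph{Part (ii).} Set $F(x):=\int_{x}^{\infty} g(u)\,du$. Part (i) reads $F(x)\sim -(\varrho+1)^{-1}\,x\,g(x)$, and the right-hand side, being a positive constant multiple of the product of the identity and $g\in\sR_{\varrho}[\infty]$, belongs to $\sR_{\varrho+1}[\infty]$. By the $\infty$-analogue of Lemma~\ref{lemma:8}, $F\in\sR_{\varrho+1}[\infty]$ as well. Applying the Uniform Convergence Theorem~\ref{theo:UCT} (at $\infty$) to $F$ yields
\begin{equation*}
    \frac{F(tx)}{F(x)}\;\longrightarrow\;t^{\varrho+1},\quad x\to\infty,
\end{equation*}
uniformly in $t$ on any compact subset of $(0,\infty)$, which is (ii).

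\emph{Part (iii) and main obstacle.} It suffices to establish
\begin{equation*}
S_k:=\sum_{j=k}^{\infty} g(j)\sim \int_{k}^{\infty} g(u)\,du,\quad k\to\infty,
\end{equation*}
and then compose with (i); convergence of $S_k$ for large $k$ follows again from Potter-type upper bounds $g(j)\le C j^{\varrho+\delta}$ with $\delta$ small enough that $\varrho+\delta<-1$. Write
\begin{equation*}
    \int_{k}^{\infty} g(u)\,du=\sum_{j=k}^{\infty} c_j\,g(j),\qquad c_j:=\int_{j}^{j+1}\frac{g(u)}{g(j)}\,du.
\end{equation*}
By UCT at $\infty$, the ratio $g(tj)/g(j)$ converges to $t^{\varrho}$ uniformly on compact $t$-sets; since $t=u/j\in[1,1+1/j]$ shrinks to $\{1\}$ and $t^{\varrho}\to 1$ uniformly on $[1,1+1/j]$, this forces $c_j\to 1$. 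The elementary estimate
\begin{equation*}
    \Bigl|\int_{k}^{\infty} g(u)\,du-S_k\Bigr|\;\le\;\sup_{j\ge k}|c_j-1|\cdot S_k
\end{equation*}
then yields the desired $S_k\sim \int_{k}^{\infty} g(u)\,du$, completing (iii). The main obstacle is precisely this last sum-integral comparison: since $g$ is only required to be measurable, the standard monotone integral test is unavailable, and one must instead exploit the uniformity in UCT on the vanishingly short multiplicative intervals $[1,1+1/j]$ to justify the approximation; this is the only point at which a genuinely non-trivial input of Karamata theory beyond parts (i) and (ii) is used.
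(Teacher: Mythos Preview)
Your proof is correct. Parts (i) and (ii) are essentially identical to the paper's argument: Karamata's Theorem with $\sigma=0$ for (i), and then Lemma~\ref{lemma:8} plus the UCT applied to the function $x\mapsto\int_x^\infty g(u)\,du$ for (ii).

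For part (iii) you take a different route from the paper. The paper defines the step function $\tilde g(x):=g(j)$ for $x\in[j,j+1)$, shows directly that $\tilde g\in\sR_\varrho[\infty]$ (the key point being $\ell([x])\sim\ell(x)$, which is obtained from the UCT on a compact interval containing $[x]/x$), and then simply applies part (i) to $\tilde g$, using that $\sum_{j\ge k}g(j)=\int_k^\infty\tilde g(u)\,du$ and $\tilde g(k)=g(k)$. Your approach instead applies (i) to $g$ itself and then proves the sum--integral comparison $\sum_{j\ge k}g(j)\sim\int_k^\infty g(u)\,du$ via the coefficients $c_j=\int_j^{j+1}g(u)/g(j)\,du\to 1$. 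Both arguments hinge on the same use of the UCT on shrinking multiplicative intervals near $1$; the paper's version is marginally slicker because it recycles (i) wholesale rather than re-doing an asymptotic estimate, while yours has the advantage of making the sum--integral comparison explicit and avoiding the introduction of an auxiliary function. Either way the Potter-type bound you invoke for convergence of $S_k$ is standard and could equally be replaced by the observation that $\tilde g\in\sR_\varrho[\infty]$ with $\varrho<-1$ already forces $\int^\infty\tilde g<\infty$ by Karamata.
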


\begin{proof}
Assertion (i) follows directly from Karamata's Theorem \ref{theo:kara1} (ii) for $\sigma =0$.

For a proof of (ii) observe that the function $x\mapsto x g(x), x\in\dom(g)$ is regularly varying (at $\infty$) with  index $\varrho+1$. Hence, by Lemma~\ref{lemma:8} and  assertion (i), the function $G(x):=\int_{x}^{\infty}g(u)du$, $x\in\dom(g)$ is in $\sR_{\varrho+1}[\infty]$. Therefore, the convergence $G(tx)/G(x)\to t^{\varrho+1}$, as $x\to \infty$ is obvious and the uniformity follows from the Uniform Convergence Theorem~\ref{theo:UCT} (version at $\infty$).

It remains to prove (iii). By Lemma~\ref{lem:CT}, there is some $\ell\in\sR_0[\infty]$ such that $g(x)=x^\varrho\, \ell(x)$, $x\in\dom(g)$. Define $\tilde g$ by $\tilde g(x):=g(j)$, for $x\in[j,j+1)\cap\dom(g)$ and $j\in\N$,
and $\tilde \ell$ by $\tilde \ell(x):=\tilde g(x)/x^\varrho$, $x\in\dom(g)$. We claim that
$\tilde g\in\sR_\varrho[\infty]$ or, equivalently, $\tilde \ell\in\sR_0[\infty]$.
To prove this, by Lemma~\ref{lemma:8}, it suffices to show that $\tilde\ell(x)\sim\ell(x)$, as $x\to\infty$.
First observe that, for any $x\in[j,j+1)\cap\dom(g)$,
$
\tilde \ell(x)=g(j)/x^\varrho
$
and thus, since $x\mapsto x^\varrho$ is decreasing,
$$
\ell(j)=\frac{g(j)}{j^\varrho}\leq \tilde\ell(x)\leq \frac{g(j)}{(j+1)^\varrho}=\left(\frac j{j+1}\right)^\varrho \ell(j).
$$
Since $j=[x]$ and thus $j/(j+1)\to 1$ as $x\to\infty$, we conclude $\tilde\ell(x)\sim\ell([x])$, as $x\to\infty$. Finally, we note that $\ell([x])\sim \ell(x)$, as $x\to\infty$, since $\ell\in\sR_0[\infty]$. Indeed, letting $t_x:=[x]/x$ and noting that $1/2\leq 1-1/x\leq t_x\leq 1$ for any $x\geq 2$, the Uniform Convergence Theorem\ref{theo:UCT} (applied to $\ell$ for $t$ on the compact interval $[1/2,1]$) yields
$$
\lim_{x\to\infty}\frac{\ell([x])}{\ell(x)}=\lim_{x\to\infty}\frac{\ell(t_x\,x)}{\ell(x)}=1.
$$
This completes the proof of our claim that $\tilde g\in\sR_\varrho[\infty]$.
Now assertion (iii) follows directly by applying (i) to $\tilde g$, since, for any $k\in\N\cap\dom(g)$,
\begin{align*}
\sum_{j=k}^\infty g(j)&=\sum_{j=k}^\infty \tilde g(j)=\int_k^\infty \tilde g (u) du.\qedhere
\end{align*}
 \end{proof}

\section{The geometric part of the proof} \label{sec:proofs1}

In this section we discuss the equivalence of the first three assertions in Theorem \ref{theo:main} as well as that of assertions (vi),(vii) and (viii). The direct proofs of (i) $\Leftrightarrow$ (iii) and (vi) $\Leftrightarrow$ (viii) given in \cite{HeLap2} are rather technical, cf.~\cite[Theorems 3.4,  3.8, 3.10 and 4.1]{HeLap2}. Using characterization results for Minkowski contents in terms of S-contents from \cite{RW09,RW13}, does not only allow to add another equivalent criterion to each of the two parts of Theorem~\ref{theo:main}, but also to simplify the proofs significantly. Instead of proving (i) $\Leftrightarrow$ (iii) directly, we will establish (i) $\Leftrightarrow$ (ii) and (ii) $\Leftrightarrow$ (iii) separately. Similarly we will show (vi) $\Leftrightarrow$ (vii) and (vii) $\Leftrightarrow$ (viii).  The (generalized) S-contents, which describe the behavior of the boundary measure of the parallel sets, provide thus an extremely useful connecting link between Minkowski contents (volume of the parallel sets) and the growth of the lengths in the associated fractal string.

\subsection{S-contents vs. Minkowski contents.} Our first aim is to verify the equivalences (i) $\Leftrightarrow$ (ii) and (vi) $\Leftrightarrow$ (vii) in Theorem~\ref{theo:main}. It turns out that they can be derived essentially from the results in \cite{RW13}.  Therefore, we will not reprove the equivalence here, but rather explain the minor modifications necessary in the relevant results of \cite{RW13} to cover our present situation. In fact, we can establish this equivalence for any bounded open set $\Omega\subset\R^d$ and any gauge function $h\in\sSR_{d-D},  D\in(0,d)$. There is no need to restrict to subsets of $\R$ for this result.

\begin{thm}\label{thm:M-S-equiv}
Let $\Omega\subset\R^d$ be bounded and $F=\bd \Omega$. Let $h\in\sSR_{d-D}$ for some $D\in[0,d)$. Then the following assertions are equivalent:
\begin{enumerate}
  \item [(i)] $0< \lsM(h;F) \leq  \usM(h;F) < \infty$,
  \item [(ii)]$0< \lsS(h';F) \leq  \usS(h';F) < \infty$.
\end{enumerate}
In particular, these assertions imply $\dim_M F=D$.
\end{thm}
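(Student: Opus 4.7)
The plan is to reduce the statement to the corresponding equivalence for the power gauge $r^{d-D}$ established in \cite{RW13}, using the smoothness of $h$ and Karamata theory to pass freely between $h$ and $h'$. The whole argument is driven by the absolutely continuous identity
\begin{equation*}
V_F(r) = \int_0^r S_F(s)\,ds,\qquad S_F(s):=\mathcal{H}^{d-1}(\partial F_s\cap\Omega),
\end{equation*}
which is the starting point of \cite{RW13}. The additive constant $|F\cap\Omega|_d$ that could in principle appear on the right is incompatible with either (i) or (ii), since by Proposition \ref{prop:1} one has $h(r)\to 0$ and, in view of Lemma \ref{lem:elasticity}, $h'(r)\to\infty$ as $r\searrow 0$; so it must vanish.

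The first step is to record the structural facts that make the reduction feasible. Since $h\in\sSR_{d-D}$ with $d-D\neq 0$, Lemma \ref{lem:elasticity} gives $h'(r)\sim(d-D)\,h(r)/r$ as $r\searrow 0$, and Proposition \ref{prop:smooth-monotone} yields $h'\in\sR_{d-D-1}$. Writing $h(r)=r^{d-D}\ell(r)$ with $\ell\in\sR_0$ via Lemma \ref{lem:CT}, both $h$ and $h'$ differ from the classical gauges $r^{d-D}$ and $(d-D)\,r^{d-D-1}$ only by the common slowly varying factor $\ell$. This is the precise sense in which substituting the power gauges in the arguments of \cite{RW13} by $h$ and $h'$ requires only minor modifications.

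For the easy direction (ii) $\Rightarrow$ (i), I would integrate the bounds $c_1\, h'(s)\leq S_F(s)\leq c_2\, h'(s)$ (valid for small $s$) from $0$ to $r$ and invoke $\int_0^r h'(s)\,ds = h(r)$, which holds because $h(0^+)=0$ by Proposition \ref{prop:1}. The main obstacle is the Tauberian direction (i) $\Rightarrow$ (ii), where one cannot simply differentiate an asymptotic identity for an integral. Here I would transcribe the proof of the corresponding power-case result in \cite{RW13}, replacing each occurrence of $r^{d-D}$ by $h(r)$ and each occurrence of $(d-D)\,r^{d-D-1}$ by $h'(r)$. This substitution is legitimate because Karamata's Theorem \ref{theo:kara1} preserves the relevant integral identities up to asymptotic equivalence, Lemma \ref{lemma:7} preserves asymptotic similarity under composition with regularly varying functions, and the elasticity relation $r\, h'(r)\sim (d-D)\, h(r)$ takes over the role of the identity $r\cdot(d-D)\,r^{d-D-1}=(d-D)\,r^{d-D}$ used in the power case. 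The concluding claim $\dim_M F=D$ is then immediate from Remark \ref{rem:8}, since $\sSR_{d-D}\subset\sR_{d-D}$.
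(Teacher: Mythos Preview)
Your overall strategy---lean on the results of \cite{RW13} and use Lemma~\ref{lem:elasticity} to pass between $h$ and $h'$---is the same as the paper's, and your treatment of the easy direction (ii)$\Rightarrow$(i) is essentially identical (this is \cite[Theorem~3.2]{RW13}). Where your proposal diverges is in the hard direction (i)$\Rightarrow$(ii): you propose to transcribe the \emph{power-gauge} argument from \cite{RW13} and then upgrade it via Karamata's Theorem and Lemma~\ref{lemma:7}, whereas the paper observes that \cite{RW13} already contains a result for \emph{general} gauge functions, namely \cite[Theorem~3.4]{RW13}, so no transcription is needed. The only obstruction to applying that result directly is an auxiliary hypothesis that the slowly varying factor $\ell$ in $h(r)=r^{d-D}\ell(r)$ be non-decreasing, which is not guaranteed for $\ell\in\sR_0$. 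The paper's contribution is to locate the single place in the proof of \cite[Proposition~3.3]{RW13} where this monotonicity is used (to get $\ell(ar)\geq\ell(r)$ for some $a>1$) and to note that slow variation gives $\ell(ar)\geq(1-\epsilon)\ell(r)$ instead, which suffices.

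This matters because the Tauberian step in \cite{RW13} is not driven by Karamata's Theorem at all; it rests on the Kneser property of the volume function $V_F$, which supplies a differential inequality strong enough to recover two-sided bounds on $S_F$ from bounds on $V_F$. Your invocation of Theorem~\ref{theo:kara1} for this direction is therefore misplaced: Karamata's Theorem is an Abelian/Tauberian statement about integrals of regularly varying functions, but here one does not know a priori that $S_F$ is regularly varying---that is precisely what one is trying to conclude. So while your plan is not wrong in spirit, the tools you name for the hard direction are not the ones that do the work, and you have not identified the actual point requiring modification.
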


\begin{rem} \label{rem:full-contents}
   \emph{
   Recall that the (generalized) Minkowski contents and S-contents appearing in the statement above as well as in Theorem~\ref{theo:11} below are defined relative to the set $\Omega$, cf.\ \eqref{eq:h-Mink-def} and \eqref{eq:h-S-def}. The results from \cite{RW13} that we are going to use in the proofs are formulated for the `full' contents (with the set $\Omega$ in the definitions \eqref{eq:h-Mink-def} and \eqref{eq:h-S-def} omitted). However, due the fact that $\Omega$ is metrically associated with its boundary $F$ and that therefore the volume function $r\mapsto V_F(r)=|F_r\cap\Omega|_d$ is a Kneser function, all the results in \cite{RW13} hold literally for the relative contents used here. For more details we refer to the discussion of relative contents in  \cite{W16}.
   }
\end{rem}

\begin{proof}
  In view of Remark~\ref{rem:full-contents}, the stated equivalence follows essentially by combining \cite[Theorem 3.2]{RW13} (where the easier implication (ii) $\Rightarrow$ (i) is established) with \cite[Theorem 3.4]{RW13} (where the reverse implication is obtained). While in \cite[Theorem 3.2]{RW13} $h$ is assumed to be differentiable with derivative $h'$ being non-zero in some right-neighborhood of $0$, an assumption met for any $h\in\sSR_{d-D}$ due to Proposition~\ref{lemma:essgauge}, 
  there are additional assumptions in \cite[Theorem 3.4]{RW13}: $h$ is assumed to be of the form $h(y)=y^{d-D} g(y)$ with $g$ being non-decreasing and
\begin{equation*}
	\limsup_{y\searrow 0} \frac{y h'(y)}{h(y)} <\infty.
	\end{equation*}
The latter assumption is satisfied due to Lemma~\ref{lem:elasticity}, which says that the above limit exists and equals $d-D$ 
for $h\in\sSR_{d-D}$.  Moreover, by Lemma~\ref{lem:CT}, $h$ is clearly of the form $h(y)=y^{d-D}g(y)$ for some $g\in\sSR_{0}$. But the slowly varying factor $g$ is not necessarily non-decreasing. However, inspecting the proof of \cite[Theorem 3.4]{RW13} it is rather easy to see that `non-decreasing' can be replaced by `slowly varying' in this statement. The monotonicity of $g$ is only used once in the proof of \cite[Proposition 3.3]{RW13} to ensure that $g(ar)$ is bounded from below by $g(r)$ for some $a>1$ and all sufficiently small $r>0$.
If $g\in\sR_0$ and $\epsilon>0$, then we can certainly find some $r_1>0$, such that $g(ar)\geq (1-\epsilon)g(r)$ for any $r\in(0,r_1)$ and this suffices to extend the argument in the proof of \cite[Proposition 3.3]{RW13} to functions $g\in\sR_0$. Since \cite[Theorem 3.4]{RW13} is essentially a direct application of \cite[Proposition 3.3]{RW13}, also this statement extends to functions $g\in\sSR_0$.
For the assertion $\dim_M F = D$ see Remark~\ref{rem:8}.
This completes the proof.
\end{proof}

Also the following statement is a direct consequence of a result in \cite{RW13} and the elasticity properties derived in Lemma~\ref{lem:elasticity}.
It establishes the equivalence (vi) $\Leftrightarrow$ (vii) in Theorem~\ref{theo:main}, i.e.\ the equality of the $h$-Minkowski content and the $h'$-S-content.

\begin{Satz}[{cf.~\cite[Theorem 3.7]{RW13}\label{theo:11}}]
	Let $\Omega\subset\R^d$ be bounded and $F=\bd \Omega$. Let $h\in\sSR_{d-D}$ for some $D\in[0,d)$. Suppose $M>0$. Then
	\begin{equation*}
	{\cal{M}}(h;F) = M, \quad \text{ if and only if }\quad {\cal{S}}(h';F)=M.
	\end{equation*}
	Moreover, in this case $\dim_M F = D$.	
\end{Satz}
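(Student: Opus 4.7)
The plan is to deduce this statement from \cite[Theorem 3.7]{RW13} in essentially the same way Theorem~\ref{thm:M-S-equiv} was deduced from \cite[Theorems 3.2 and 3.4]{RW13}. First I would recall (via Remark~\ref{rem:full-contents}) that although the results in \cite{RW13} are stated for the full contents on $\R^d$, they apply verbatim to the relative contents used here, because $V_F(r)=|F_r\cap\Omega|_d$ is a Kneser function. The direction ``$\sS(h';F)=M \Rightarrow \sM(h;F)=M$'' is the easy one and should follow (as in the proof of Theorem~\ref{thm:M-S-equiv}) from the corresponding half of \cite{RW13}, which only requires $h$ to be differentiable with $h'$ nonvanishing near $0$, a condition that holds for any $h\in\sSR_{d-D}$ by Proposition~\ref{lemma:essgauge} (since $d-D\neq 0$ implies strict monotonicity near $0$).

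For the reverse direction, the result in \cite{RW13} is stated under two additional hypotheses on the gauge function: that $h(y)=y^{d-D}g(y)$ with $g$ non-decreasing, and that $\limsup_{y\searrow 0} yh'(y)/h(y)<\infty$. The second hypothesis is verified immediately via Lemma~\ref{lem:elasticity}, which gives the exact value $d-D$ of the elasticity limit. For the first hypothesis, Lemma~\ref{lem:CT} gives such a decomposition with $g\in\sSR_0$, but $g$ need not be monotone.

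The main (and only) obstacle is thus to argue that monotonicity of $g$ can be relaxed to slow variation in \cite[Theorem 3.7]{RW13}. I would handle this exactly as in the proof of Theorem~\ref{thm:M-S-equiv}: inspecting the underlying lemma (the analogue of \cite[Proposition 3.3]{RW13} used to prove Theorem 3.7), the monotonicity of $g$ is only invoked to control $g(ar)$ from below by $g(r)$ for some $a>1$ and all small $r$. For $g\in\sR_0$, the Uniform Convergence Theorem~\ref{theo:UCT} yields, for any $\epsilon>0$, some $r_1>0$ such that $g(ar)\geq (1-\epsilon)g(r)$ on $(0,r_1)$, which suffices to carry the argument through.

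Finally, the assertion $\dim_M F = D$ in the equality case is immediate from Remark~\ref{rem:8}, since an equality $\sM(h;F)=M\in(0,\infty)$ with $h\in\sR_{d-D}$ in particular gives $h$-Minkowski nondegeneracy of $F$.
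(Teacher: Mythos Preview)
Your proposal is correct and follows essentially the same route as the paper. Two minor points of precision: in the paper the extra hypothesis of \cite[Theorem 3.7]{RW13} is phrased as $\lim_{y\to 0} y g'(y)/g(y)=0$ (on $g$, not the $\limsup$-condition on $h$ from Theorem~3.4), though this is equally a consequence of Lemma~\ref{lem:elasticity}; and the underlying estimate to be adapted for slowly varying $g$ is \cite[Proposition 3.6]{RW13} (a refinement of Proposition~3.3), with the reverse implication coming specifically from \cite[Theorem 3.2]{RW13}.
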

\begin{proof}
  By Lemma~\ref{lem:CT}, $h$ can be written in the form $h(y)=g(y)y^{d-D}$ for some $g\in\sSR_0$ and by Lemma~\ref{lem:elasticity}, we have $\lim_{y\to 0} g'(y)y/g(y)=0$. Therefore, taking into account Remark~\ref{rem:full-contents}, the hypothesis of \cite[Theorem 3.7]{RW13} is satisfied except that the function $g$ is now slowly varying and not necessarily non-decreasing.  However, we can argue similarly as in the proof of Theorem~\ref{thm:M-S-equiv} above that the forward implication in \cite[Theorem 3.7]{RW13} follows essentially from \cite[Proposition 3.6]{RW13}, which is a refinement of \cite[Proposition 3.3]{RW13} and in which the estimates can easily be modified to work for slowly varying $g$.
  The reverse implication in \cite[Theorem 3.7]{RW13} (which is the reverse implication in Theorem~\ref{theo:11}) is a direct consequence of \cite[Theorem 3.2]{RW13}, which can be applied since, by Proposition~\ref{lemma:essgauge}, $h'$ is non-zero in some right neighborhood of $0$.
  Finally, for a proof of $\dim_M F = D$ see Remark~\ref{rem:8}.
\end{proof}

\subsection{S-Contents and fractal strings.}
Recall that in dimension $d=1$, we can associate to any bounded open set $\Omega\subset\R$, its fractal string ${\sL}=(l_j)_{j\in\N}$, encoding the lengths $l_j$ of the connected components $I_j$ of $\Omega$. They appear in $\sL$ in non-increasing order, i.e.\ $l_1\geq l_2\geq l_3\geq \ldots$, and according to their multiplicities.
We will discuss now the relations between S-contents and the asymptotic growth of the lengths $l_j$, and establish in particular the equivalences $(ii)\Leftrightarrow(iii)$ and $(vii)\Leftrightarrow(viii)$ of Theorem~\ref{theo:main}. Note that (and with $F=\bd\Omega$), the  `\emph{surface area}' of $\bd F_\eps\cap \Omega$ in the definition of the S-content reduces in dimension $d=1$ to the counting measure $\Ha^0$.
Therefore, the key idea behind these two relations is the following simple geometric observation (everything else is `\emph{asymptotic calculus}'): All the intervals $I_j$ of $F$ with length $l_j\leq2\eps$ are completely covered by $F_\eps$ and do not contribute to the boundary $\bd F_\eps$, while each of the remaining ones contributes exactly two points.
 That is, for any $j\in\N$ and $\eps\in[l_j/2,l_{j-1}/2)$, we have
\begin{align}
  \label{eq:bd-measure} \Ha^0(\bd F_\eps\cap\Omega)=2(j-1).
\end{align}
Recall from Proposition~\ref{prop:g-and-f} that for $D\in(0,1)$ and $h\in\sSR_{1-D}$, the function $g$ is given by $g(x) := H^{-1}(1/x)$, where $H^{-1}$ is the inverse of $H(y) := {y}/{h(y)}$.  

\begin{Satz}\label{theo:4}
	Let $\Omega\subset \R$ be open and bounded, $F:=\bd \Omega$ and  ${\sL}=(l_j)_{j\in\N}$ the associated fractal string. Let $D\in (0,1)$ and $h\in \sSR_{1-D}$. Then the following assertions are equivalent:
\begin{enumerate}
  \item[(i)]$0< \underline{{\cal{S}}}(h';F)
	\leq \ol{{\cal{S}}}(h';F) <\infty,$
  \item[(ii)] $l_j \asymp g(j)$, as $j \to \infty$.
\end{enumerate}
\end{Satz}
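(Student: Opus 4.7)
The proof hinges on the elementary one-dimensional identity \eqref{eq:bd-measure}, which says that $\Ha^0(\bd F_\eps\cap\Omega) = 2(j-1)$ for $\eps \in [l_j/2, l_{j-1}/2)$. This identity is the bridge that reduces the equivalence (i) $\Leftrightarrow$ (ii) to an asymptotic translation between the continuous parameter $\eps\searrow 0$ and the discrete index $j\to\infty$; all the remaining work is asymptotic calculus in the sense of Karamata theory.

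The analytic dictionary is the following. By Lemma~\ref{lem:elasticity}, $h'(\eps) \sim (1-D)h(\eps)/\eps$ as $\eps\searrow 0$, hence $h'(\eps) \asymp 1/H(\eps)$, where $H(y) := y/h(y) \in \sSR_D$. Proposition~\ref{prop:smooth-monotone} gives strict monotonicity of $H$ near $0$, so $H^{-1} \in \sSR_{1/D}[\infty]$ is well defined and $g(j) = H^{-1}(1/j)$. Since $H$ and $H^{-1}$ are regularly varying with nonzero indices, Lemma~\ref{lemma:7} implies that both functions preserve asymptotic similarity; in particular, $l_j \asymp g(j)$ as $j\to\infty$ if and only if $H(l_j) \asymp 1/j$. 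The asymptotic homogeneity $H(2\eps)\asymp H(\eps)$ and $h'(l_j/2)\asymp h'(l_j)$ will be used silently.

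For (ii) $\Rightarrow$ (i), given $\eps > 0$ sufficiently small, choose the unique $j=j(\eps)$ with $\eps \in [l_j/2, l_{j-1}/2)$; such $j$ exists since $\sum l_i < \infty$ forces $l_j\to 0$. Applying $H$ to the sandwich $l_j \leq 2\eps < l_{j-1}$ and combining (ii) (in the equivalent form $H(l_j) \asymp 1/j$) with homogeneity of $H$ yields $H(\eps) \asymp 1/j$, i.e.\ $j \asymp 1/H(\eps) \asymp h'(\eps)$. Since $\Ha^0(\bd F_\eps\cap\Omega) = 2(j-1) \asymp j$, we obtain $0 < \lsS(h';F)\leq\usS(h';F)<\infty$. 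Conversely, for (i) $\Rightarrow$ (ii), take $\eps$ slightly below $l_j/2$, so that the geometric identity gives $\Ha^0(\bd F_\eps\cap\Omega) = 2j$; passing $\eps \nearrow l_j/2$ and using continuity of $h'$, (i) becomes $j \asymp h'(l_j/2) \asymp h(l_j)/l_j = 1/H(l_j)$. Applying $H^{-1}$ via Lemma~\ref{lemma:7} yields $l_j \asymp H^{-1}(1/j) = g(j)$.

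The main technical nuisance is the discrete-continuous translation: the step-function behaviour of $\eps\mapsto\Ha^0(\bd F_\eps\cap\Omega)$ and the possible collapse $l_{j-1}=l_j$ (which empties the bracketing interval). These issues are harmless and can be dealt with by working with the counting function $\eps\mapsto\#\{i: l_i > 2\eps\}$ and restricting to indices $j$ corresponding to non-degenerate plateaus. Apart from this bookkeeping, the proof is a direct application of the Karamata machinery developed in Section~\ref{sec:karamata}.
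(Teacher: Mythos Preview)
Your proposal is correct and follows essentially the same route as the paper's proof: both use the geometric identity \eqref{eq:bd-measure} as the bridge, translate (i) via Lemma~\ref{lem:elasticity} into $\Ha^0(\bd F_\eps\cap\Omega)\asymp h(\eps)/\eps$, translate (ii) via Lemma~\ref{lemma:7} into $H(l_j)\asymp 1/j$, and then pass between the two by a monotonicity-based sandwich argument (you sandwich $H(2\eps)$, the paper sandwiches the reciprocal $h(\eps)/\eps$; these are the same manoeuvre). Two minor remarks: $H^{-1}$ is in $\sSR_{1/D}$ (regular variation at $0$), not $\sSR_{1/D}[\infty]$; and in the direction (i)~$\Rightarrow$~(ii) the paper simply evaluates at $\eps=r_j=l_j/2$ to read off $2(j-1)\asymp h(r_j)/r_j$, which avoids your limit-from-below step, though both are fine.
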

\begin{proof}
  We first reformulate both assertions using `asymptotic calculus'. Then their equivalence will follow easily from \eqref{eq:bd-measure}.

  On the one hand, assertion (i) can be rewritten as $\Ha^0(\bd F_\eps\cap \Omega)\asymp h'(\eps)$, as $\eps\searrow 0$, which, by Lemma~\ref{lem:elasticity}, is equivalent to $\Ha^0(\bd F_\eps\cap \Omega)\asymp h(\eps)/\eps$, as $\eps\searrow 0$.

On the other hand, since $g(x)=H^{-1}(1/x)$ and $H\in\sSR_{D}$, we can apply $H$ to both sides of (ii) to see that, by Lemma~\ref{lemma:7}, (ii) is equivalent to
   $$H(l_j)=l_j/h(l_j)\asymp 1/j, \text{ as } j\to\infty.$$ By Lemma~\ref{lem:asymp-properties}, this can also be written as $j\asymp h(l_j)/l_j$, as $j\to\infty.$ 
   Using the asymptotic homogeneity of $h\in\sSR_{1-D}$ (which implies $h(y)\sim 2^{1-D}h(y/2)$, as $y\searrow 0$, cf.~Remark~\ref{rem:1-1}) and recalling that positive constants do not matter in `$\asymp$'-relations, this is equivalently given by $2j\asymp {h(l_j/2)}/(l_j/2)$, as  $j\to\infty$. Since $(j-1)/j\to 1$, as $j\to\infty$, we can replace $j$ by $j-1$ on the left. Therefore, the statement of the theorem is equivalent to
\begin{align} \label{eq:theo4-reformulated}
  \Ha^0(\bd F_\eps\cap\Omega)\asymp \frac{h(\eps)}{\eps}, \text{ as } \eps\searrow 0, \quad\text{ iff }\quad 2(j-1)\asymp \frac{h(r_j)}{r_j}, \text{ as } j\to\infty,
\end{align}
where $r_j:=l_j/2$, $j\in\N$. ($r_j$ is the `inradius' of an interval of length $l_j$.)

The forward implication in \eqref{eq:theo4-reformulated} is obvious: since \eqref{eq:bd-measure} implies in particular that $\Ha^0(\bd F_{r_j}\cap\Omega)=2(j-1)$ for each $j\in\N$, we just need to plug in the sequence $(r_j)_{j\in\N}$ for $\eps$ in the left assertion.

For a proof of the reverse implication in \eqref{eq:theo4-reformulated}, assume the right hand side holds, which means, there are constants $c_1, c_2$ such that $c_1\leq 2(j-1)/(h(r_j)/r_j)\leq c_2$ for each $j$. Since the function $\eps\mapsto h(\eps)/\eps$ is in $\sSR_{-D}$, it is strictly decreasing in some right neighborhood $(0,\eps_0)$ of $0$, cf.\ Proposition~\ref{lemma:essgauge}. Moreover, $\lim_{\eps\searrow 0} h(\eps)/\eps=+\infty$. Therefore, for any sufficiently large $j\in\N$ (such that $r_{j-1}<\eps_0$) and any $\eps\in [r_j,r_{j-1})$, 
the quotient $\frac{\Ha^0(\bd F_\eps\cap\Omega)}{h(\eps)/\eps}$ is bounded from above and below by
$$
c_1\leq\frac{2(j-1)}{h(r_{j})/r_{j}}\leq \frac{\Ha^0(\bd F_\eps\cap\Omega)}{h(\eps)/\eps}\leq \frac{2(j-1)}{h(r_{j-1})/r_{j-1}}\leq c_2+\frac{2}{h(r_{j-1})/r_{j-1}},
$$
where the last summand on the right vanishes as $j\to\infty$. But this implies the left assertion in \eqref{eq:theo4-reformulated}, completing the proof of Theorem~\ref{theo:4}.
\end{proof}

A very similar argument allows to establish the equivalence of generalized S-measurability of $F=\bd\Omega$ and generalized `$L$-measurability' of the associated fractal string, i.e. the equivalence of the assertions (vii) and (viii) in Theorem~\ref{theo:main}.

\begin{Satz}\label{theo:13}
Let $F\subset \R$ be a compact set and let ${\sL} =(l_j)_{j\in \N}$ be the associated fractal string. Let $D\in(0,1)$ and $h\in \sSR_{1-D}$. Let $L>0$. Then the following assertions are equivalent:
\begin{enumerate}[(i)]
\item $F$ is $h'$-S measurable (i.e.\ $0< {\cal{S}}(h',F)	<\infty$) with 	${\cal{S}}(h',F)=S:=\frac{2^{1-D} L^D}{1-D}$,
\item	$l_j \sim L\cdot g(j)$,  as $j\rightarrow \infty$.
\end{enumerate}
\end{Satz}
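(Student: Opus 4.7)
The proof will follow the same template as Theorem~\ref{theo:4}, but with the exact asymptotic equivalence $\sim$ in place of $\asymp$ and with explicit constants to track. My plan is to rewrite both (i) and (ii) as statements about the ratio $\Ha^0(\bd F_\eps\cap\Omega)/(h(\eps)/\eps)$ (respectively at $\eps=r_j:=l_j/2$) and then to conclude via a monotonicity squeeze using the geometric identity \eqref{eq:bd-measure}.

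For the reformulation of (i), I will use Lemma~\ref{lem:elasticity} to replace $h'(\eps)$ with $(1-D)h(\eps)/\eps$ in the definition of $\sS(h';F)$, yielding the equivalent statement
\begin{equation*}
\Ha^0(\bd F_\eps\cap\Omega) \sim 2^{1-D}L^{D}\,\frac{h(\eps)}{\eps},\quad\text{ as }\eps\searrow 0.
\end{equation*}
For (ii), the plan is to push $l_j\sim Lg(j)$ through $H$ and through reciprocals. Since $H=y/h(y)\in\sSR_{D}$ with $D>0$ and $l_j,\,Lg(j)\to 0$, the $\sim$-branch of Lemma~\ref{lemma:7} gives $H(l_j)\sim H(Lg(j))$, and the asymptotic homogeneity of $H$ (Remark~\ref{rem:1-1}) upgrades this to $H(l_j)\sim L^{D}H(g(j))=L^{D}/j$. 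The reverse implication is obtained by the same argument applied to $H^{-1}\in\sSR_{1/D}$ and its own asymptotic homogeneity $H^{-1}(L^{D}z)\sim L\,H^{-1}(z)$ as $z\searrow 0$. Rearranging $H(l_j)\sim L^{D}/j$ and combining with $h(r_j)/r_j\sim 2^{D}h(l_j)/l_j$ (asymptotic homogeneity of $h$) and $2(j-1)\sim 2j$, assertion (ii) becomes equivalent to
\begin{equation*}
2(j-1) \sim 2^{1-D}L^{D}\,\frac{h(r_j)}{r_j},\quad\text{ as }j\to\infty.
\end{equation*}

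The final step is the geometric bridge. By \eqref{eq:bd-measure}, $\Ha^0(\bd F_\eps\cap\Omega)=2(j-1)$ for every $\eps\in[r_j,r_{j-1})$. Since the function $\eps\mapsto h(\eps)/\eps$ belongs to $\sSR_{-D}$, it is strictly decreasing near $0$ by Proposition~\ref{prop:smooth-monotone}. Hence the quotient $\Ha^0(\bd F_\eps\cap\Omega)/(h(\eps)/\eps)$ is sandwiched on each $[r_j,r_{j-1})$ between $2(j-1)/(h(r_j)/r_j)$ and $2(j-1)/(h(r_{j-1})/r_{j-1})$. Under the reformulation of (ii), and using $(j-1)/(j-2)\to 1$, both bounds converge to $2^{1-D}L^{D}$, yielding the reformulation of (i). The converse direction is obtained by evaluating the reformulation of (i) at $\eps=r_j$, using $\Ha^0(\bd F_{r_j}\cap\Omega)=2(j-1)$.

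The main obstacle I anticipate is the bookkeeping required to preserve exact asymptotic equivalence (rather than just $\asymp$) when pushing $\sim$ through $H$, $H^{-1}$, and the scaling $l_j\mapsto l_j/2$; the constants $L^{D}$ and $2^{1-D}$ must appear with the precise exponents dictated by asymptotic homogeneity. Once this is handled, the geometric squeeze mirrors the one in Theorem~\ref{theo:4}, only tightened so that the lower and upper bounds both converge to the same limit. Possible ties $l_j=l_{j-1}$ pose no real difficulty, since $\eps\mapsto\Ha^0(\bd F_\eps\cap\Omega)$ is a monotone step function and the squeeze can be applied along the subsequence of $r_j$'s for which $r_j<r_{j-1}$.
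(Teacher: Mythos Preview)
Your proposal is correct and follows essentially the same approach as the paper: both reformulate (i) via Lemma~\ref{lem:elasticity} and (ii) via Lemma~\ref{lemma:7} and the asymptotic homogeneity of $H$ into the pair of $\sim$-statements involving $\Ha^0(\bd F_\eps\cap\Omega)/(h(\eps)/\eps)$ and $2(j-1)/(h(r_j)/r_j)$, and then pass between them using \eqref{eq:bd-measure} together with the strict monotonicity of $\eps\mapsto h(\eps)/\eps$ from Proposition~\ref{prop:smooth-monotone}. Your treatment of the reverse direction of the reformulation of (ii) (via $H^{-1}$) and of possible ties $l_j=l_{j-1}$ is slightly more explicit than the paper's, but the argument is the same.
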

\begin{proof}
   We follow the line of the argument in the proof of Theorem~\ref{theo:4} and start by reformulating the statement.
   In view of \eqref{eq:theo4-reformulated} and using again $r_j:=l_j/2$, our first claim is that the relation (i)$\Leftrightarrow$(ii) is equivalent to the following equivalence:
   \begin{align} \label{eq:theo4-reformulated3}
  \Ha^0(\bd F_\eps\cap\Omega)\sim 2^{1-D} L^D \frac{h(\eps)}{\eps}, \text{ as } \eps\searrow 0,  \text{ iff }\quad  2(j-1)\sim 2^{1-D}L^D \frac{h(r_j)}{r_j}, \text{ as } j\to\infty.
\end{align}
   Indeed, assertion (i) in Theorem~\ref{theo:13} means $\Ha^0(\bd F_\eps\cap\Omega)\sim S\cdot h'(\eps)$, as $\eps\searrow 0$ and, by Lemma~\ref{lem:elasticity}, $h'(\eps)$ can be replaced by $(1-D)h(\eps)/\eps$ (since $h\in\sSR_{1-D}$), showing the equivalence of (i) and the left assertion in \eqref{eq:theo4-reformulated3}. (Recall that $(1-D)S=2^{1-D} L^D$.) Similarly, by applying $H$ to both sides of assertion (ii) and recalling that $g(x)=H^{-1}(1/x)$, we can infer from Lemma~\ref{lemma:7} that (ii) is equivalent to $H(L^{-1} l_j)=L^{-1}l_j/h(L^{-1}l_j)\sim 1/j$, as $j\to\infty$. Taking into account the homogeneity of $h$ (cf.\ Remark~\ref{rem:1-1}) and Lemma~\ref{lem:asymp-properties}, this can be rephrased as
   $j\sim L^D h(l_j)/l_j$, as $j\to\infty$, which is easily seen to be equivalent to the right assertion in \eqref{eq:theo4-reformulated3}, using again the homogeneity of $h$. This completes the proof of the above claim. It is therefore sufficient to prove the equivalence in \eqref{eq:theo4-reformulated3}.

   The forward implication in \eqref{eq:theo4-reformulated3} is again obvious from \eqref{eq:bd-measure}, by plugging in the sequence $(r_j)_{j\in\N}$ for $\eps$ in the left assertion.
The reverse implication in \eqref{eq:theo4-reformulated3} requires now a slightly refined argument. Assume the right hand side holds. Then, for each $\delta>0$, there is some $j_0=j_0(\delta)$ such that
$$
1-\delta\leq \frac{2(j-1)}{c\cdot h(r_j)/r_j}\leq 1+\delta$$ for each $j\geq j_0$, where $c:=2^{1-D}L^D$. Since the function $\eps\mapsto h(\eps)/\eps$ is in $\sSR_{-D}$, it is strictly decreasing in some right neighborhood $(0,\eps_0)$ of $0$, cf.\ Proposition~\ref{lemma:essgauge}. 
Therefore, for any sufficiently large $j\in\N$ (such that $j\geq j_0$ and $r_{j-1}/2<\eps_0$) and any $\eps\in [r_j,r_{j-1})$, 
the quotient $\frac{\Ha^0(\bd F_\eps\cap\Omega)}{c\cdot h(\eps)/\eps}$ is bounded from above and below by
\begin{align*}
  1-\delta\leq\frac{2(j-1)}{c\cdot h(r_{j})/r_{j}}\leq \frac{\Ha^0(\bd F_\eps\cap\Omega)}{c\cdot h(\eps)/\eps}&\leq \frac{2(j-1)}{c\cdot h(r_{j-1})/r_{j-1}}
  \leq 1+\delta +\frac{2}{c \cdot h(r_{j-1})/r_{j-1}},
\end{align*}
where again the last summand on the right vanishes as $j\to\infty$. Since the argument works for any $\delta>0$, the left assertion in \eqref{eq:theo4-reformulated3} follows, completing the proof of Theorem~\ref{theo:13}.
\end{proof}

\section{The `spectral' part of the proof of Theorem \ref{theo:main}}
\label{sec:proofs2}
In this section, we will finally establish the connection between the geometric and the spectral properties of $\Omega$, i.e. in particular the equivalence of the assertions (iii) and (iv) in part I of  Theorem~\ref{theo:main} and the validity of \eqref{eq:main-N} in part II.
Because of the results in the previous section, we can use a combination of the assertions (i)-(iii) of Theorem~\ref{theo:main} to conclude the validity of (iv) -- we will use (ii) and (iii).  In contrast, it is enough to show that (iv) implies at least one of the assertions (i)-(iii).

It will be convenient now to use the \textit{string counting function} $J$ defined  by
\begin{equation}\label{def:J}
J(\varepsilon) := \max \{ j \text{ }|\text{ } l_j > \varepsilon  \},
\end{equation}
for any fractal string $\sL=(l_j)_{j\in\N}$, counting the number of lengths $l_j$ in $\sL$ that are strictly larger than $\varepsilon$.
Since
\begin{align} \label{eq:H0-J-relation}
2J(2\eps)=\Ha^0(\bd F_\eps\cap\Omega),
\end{align}
 for any $\eps>0$, it is easy to see that the asymptotics of $\Ha^0(\bd F_\eps\cap\Omega)$ (described by the S-content) determines the asymptotics of $J(\eps)$, and vice versa.

\begin{Proposition}
  \label{prop:J-asymp}
  Under the hypothesis of Theorem~\ref{theo:main}, any of the assertions (i),(ii) and (iii) in Theorem~\ref{theo:main} 
  is equivalent to
\begin{align} \label{eq:J-asymp}
  J(\eps)\asymp h(\eps)/\eps, \quad \text{ as } \eps\searrow 0.
\end{align}
Similarly, any of the assertions (vi), (vii) or (viii) in Theorem~\ref{theo:main} 
is equivalent to
\begin{align} \label{eq:J-sim}
  J(\eps)\sim L^D h(\eps)/\eps, \quad \text{ as } \eps\searrow 0,
\end{align}
where $L$ is the constant in (viii). In particular, this implies $J\in\sR_{-D}$. 
\end{Proposition}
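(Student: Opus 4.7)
The plan is to use the relation \eqref{eq:H0-J-relation} as the bridge between the string counting function $J$ and the boundary measure $\Ha^0(\bd F_\eps \cap \Omega)$ appearing in the definition of the generalized S-content. Since the assertions (i), (ii), (iii) of Theorem~\ref{theo:main} are already equivalent to each other by Theorem~\ref{thm:M-S-equiv} and Theorem~\ref{theo:4}, and the assertions (vi), (vii), (viii) are equivalent to each other by Theorem~\ref{theo:11} and Theorem~\ref{theo:13}, it suffices to establish \eqref{eq:J-asymp} as equivalent to (ii), and \eqref{eq:J-sim} as equivalent to (vii). This reduces the proposition to a short piece of ``asymptotic calculus''.

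For the first equivalence, I would rewrite assertion (ii) of Theorem~\ref{theo:main} as $\Ha^0(\bd F_\eps\cap\Omega)\asymp h'(\eps)$ as $\eps\searrow 0$. By Lemma~\ref{lem:elasticity}, $h'(\eps)\sim (1-D) h(\eps)/\eps$, so in view of \eqref{eq:asymp-sim} the assertion is equivalent to $\Ha^0(\bd F_\eps\cap\Omega)\asymp h(\eps)/\eps$. Via \eqref{eq:H0-J-relation} this reads $2J(2\eps)\asymp h(\eps)/\eps$ as $\eps\searrow 0$. Setting $\eps':=2\eps$ and using the asymptotic homogeneity of $h\in\sSR_{1-D}$ recalled in Remark~\ref{rem:1-1}, namely $h(\eps'/2)\sim 2^{D-1} h(\eps')$, yields $h(\eps'/2)/(\eps'/2)\sim 2^{D} h(\eps')/\eps'$, so the above is in turn equivalent to $J(\eps')\asymp h(\eps')/\eps'$ as $\eps'\searrow 0$, which is \eqref{eq:J-asymp}.

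For the second equivalence, I would run the same chain with $\sim$ replacing $\asymp$ and keep track of constants. Assertion (vii) combined with \eqref{eq:main-M-S-L} means $\Ha^0(\bd F_\eps\cap\Omega)\sim \frac{2^{1-D}L^D}{1-D}\, h'(\eps)$; Lemma~\ref{lem:elasticity} converts this to $\Ha^0(\bd F_\eps\cap\Omega)\sim 2^{1-D}L^D h(\eps)/\eps$. Applying \eqref{eq:H0-J-relation} and the substitution $\eps'=2\eps$ together with the homogeneity $h(\eps'/2)/(\eps'/2)\sim 2^D h(\eps')/\eps'$ produces
\begin{equation*}
2J(\eps')\sim 2^{1-D}L^D\cdot 2^D\,\frac{h(\eps')}{\eps'}=2L^D\frac{h(\eps')}{\eps'},
\end{equation*}
i.e.\ $J(\eps)\sim L^D h(\eps)/\eps$, which is \eqref{eq:J-sim}. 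The closing statement $J\in\sR_{-D}$ then follows from Lemma~\ref{lemma:8}, because the function $\eps\mapsto L^D h(\eps)/\eps$ belongs to $\sR_{(1-D)-1}=\sR_{-D}$.

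The only nontrivial step is the variable change from $2\eps$ to $\eps$ inside the asymptotic relations, which is where the asymptotic homogeneity of regularly varying gauges plays its role; everything else is a direct translation between the S-content formulation and the counting-function formulation through the identity \eqref{eq:H0-J-relation} together with the elasticity relation $h'\sim(1-D)h/\eps$. No additional geometric input is needed beyond what has already been established.
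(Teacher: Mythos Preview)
Your proof is correct and follows essentially the same route as the paper's: reduce to linking \eqref{eq:J-asymp} with (ii) and \eqref{eq:J-sim} with (vii), use the identity \eqref{eq:H0-J-relation} together with $h'(\eps)\sim(1-D)h(\eps)/\eps$ from Lemma~\ref{lem:elasticity}, and then absorb the factor $2$ via the asymptotic homogeneity of $h$. The only cosmetic difference is that the paper quotes the intermediate reformulations \eqref{eq:theo4-reformulated} and \eqref{eq:theo4-reformulated3} from the proofs of Theorems~\ref{theo:4} and~\ref{theo:13}, whereas you rederive them directly from the definition of the S-content; the content is identical.
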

\begin{proof}
  Due to the results of the previous section, it suffices to relate \eqref{eq:J-asymp} to assertion (ii) and \eqref{eq:J-sim} to (vii), which is easy to do with the help of \eqref{eq:H0-J-relation}. For the first equivalence recall from the proof of Theorem~\ref{theo:4} (see \eqref{eq:theo4-reformulated}) that assertion (ii) in Theorem~\ref{theo:main} is equivalent to $\Ha^0(\bd F_\eps\cap\Omega)\asymp h(\eps)/\eps, \text{ as } \eps\searrow 0$ and apply \eqref{eq:H0-J-relation}.

  For the second stated equivalence, recall from the proof of Theorem~\ref{theo:13} (see \eqref{eq:theo4-reformulated3}) that assertion (vii) is equivalent to $\Ha^0(\bd F_\eps\cap\Omega)\sim 2^{1-D} L^D h(\eps)/\eps$ as $\eps\searrow0$. 
  Therefore, by \eqref{eq:H0-J-relation}, $2J(2\eps)\sim 2^{1-D} L^D h(\eps)/\eps$, as $\eps\searrow0$. Substituting $2\eps$ by $\eps$ and taking into account the asymptotic homogeneity of $h$ (of degree $1-D$), we obtain
  $$
  J(\eps)\sim 2^{-D} L^D \frac{h(\eps/2)}{\eps/2}\sim 2^{1-D} L^D \frac{h(\eps) (1/2)^{1-D}}\eps=L^D\frac{h(\eps)}\eps, \text{ as } \eps\searrow 0,
  $$
  which completes the proof of \eqref{eq:J-sim}. Finally, since $\eps\mapsto h(\eps)/\eps$ is regularly varying with index $-D$,  $J\in\sR_{-D}$ follows from \eqref{eq:J-sim} by Lemma~\ref{lemma:8}.
\end{proof}
The following observation will turn out to be very useful in the proof of the implication (iii)$\Rightarrow$(iv).

\begin{Lemma}
  \label{lem:sumA}
  Assume that assertion (iii) of Theorem~\ref{theo:main} holds. Then
  \begin{align*} 
    \sum_{j> J(2\eps)} l_j\asymp h(\eps), \text{ as } \eps\searrow 0.
  \end{align*}
\end{Lemma}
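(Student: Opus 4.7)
The plan is to reduce the tail sum of the lengths to an explicit tail sum of a regularly varying sequence, apply Karamata's theorem to evaluate it, and then carefully substitute back in terms of $\eps$ using the definition of $g$ as an inverse of $H$. Throughout, write $k=k(\eps):=J(2\eps)$.

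First I would collect the asymptotic information. By Proposition~\ref{prop:J-asymp}, assertion (iii) is equivalent to $J(\eps)\asymp h(\eps)/\eps$, and from the asymptotic homogeneity of $h\in\sSR_{1-D}$ (Remark~\ref{rem:1-1}) one gets $h(2\eps)\sim 2^{1-D}h(\eps)$. Hence
\begin{equation*}
k(\eps)=J(2\eps)\asymp \frac{h(2\eps)}{2\eps}\asymp \frac{h(\eps)}{\eps}, \qquad \eps\searrow 0,
\end{equation*}
so in particular $k(\eps)\to\infty$ as $\eps\searrow 0$.

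Next I would replace $l_j$ by $g(j)$ in the sum. From assertion (iii), $l_j\asymp g(j)$ as $j\to\infty$, so there are positive constants $c_1,c_2$ with $c_1 g(j)\leq l_j\leq c_2 g(j)$ for all large $j$. Summing over $j>k$ therefore gives $\sum_{j>k}l_j\asymp\sum_{j>k}g(j)$. Since $g\in\sSR_{-1/D}[\infty]$ by Proposition~\ref{prop:g-and-f} and $-1/D<-1$, I may apply Proposition~\ref{prop:quotg}(iii) to conclude
\begin{equation*}
\sum_{j>k}g(j)=\sum_{j=k+1}^{\infty}g(j)\sim\frac{D}{1-D}(k+1)\,g(k+1)\asymp k\,g(k), \qquad k\to\infty,
\end{equation*}
where the last step uses that $x\mapsto xg(x)$ is regularly varying at $\infty$ (Lemma~\ref{lemma:8}).

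Finally I would translate $k\,g(k)$ back to a function of $\eps$. Because $k(\eps)\asymp h(\eps)/\eps\to\infty$ and $g\in\sR_{-1/D}[\infty]$, Lemma~\ref{lemma:7} (version at $\infty$) gives $g(k(\eps))\asymp g(h(\eps)/\eps)$. But by definition of $g$,
\begin{equation*}
g\!\left(\frac{h(\eps)}{\eps}\right)=H^{-1}\!\left(\frac{\eps}{h(\eps)}\right)=H^{-1}(H(\eps))=\eps,
\end{equation*}
so $g(k(\eps))\asymp\eps$. Combining with $k(\eps)\asymp h(\eps)/\eps$ yields
\begin{equation*}
\sum_{j>J(2\eps)}l_j\asymp k(\eps)\,g(k(\eps))\asymp \frac{h(\eps)}{\eps}\cdot\eps=h(\eps),\qquad\eps\searrow 0,
\end{equation*}
which is the claim. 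No step is a real obstacle; the only place requiring some care is the composition $g\circ k$, which is precisely what Lemma~\ref{lemma:7} is designed to handle, and the rewriting $g(h(\eps)/\eps)=\eps$ uses nothing beyond the definitions of $g$ and $H$.
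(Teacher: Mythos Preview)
Your proof is correct and follows essentially the same approach as the paper's: reduce the tail sum of the $l_j$ to a tail sum of $g(j)$ via assertion (iii), apply Proposition~\ref{prop:quotg}(iii) to obtain $k\,g(k)$, and then use Lemma~\ref{lemma:7} together with the identity $g(h(\eps)/\eps)=\eps$ to rewrite this as $h(\eps)$. The only cosmetic differences are that you make the $k+1$ versus $k$ shift explicit (the paper absorbs this silently) and that your citation of Lemma~\ref{lemma:8} for the regular variation of $x\mapsto xg(x)$ is slightly off---that fact follows directly from the definition rather than from Lemma~\ref{lemma:8}---but the mathematical content is identical.
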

\begin{proof}
   Due to the assumption $l_j\asymp g(j)$ as $j\to \infty$, we find positive constants $j_0,\underline{\alpha},\overline{\alpha}$ such that $l_j/g(j)\in[\underline{\alpha},\overline{\alpha}]$ for all $j\geq j_0$. Fix $\eps_0>0$ small enough such that $J(2\eps_0)\geq j_0$. Then  we have for any $0<\eps\leq\eps_0$
  $$
  \underline{\alpha}\sum_{j> J(2\eps)} g(j)\leq \sum_{j> J(2\eps)} l_j\leq \overline{\alpha}\sum_{j> J(2\eps)} g(j).
  $$
  This shows
\begin{align}
  \label{eq:sumA}
  \sum_{j> J(2\eps)} l_j\asymp \sum_{j> J(2\eps)} g(j), \quad \text{ as } \eps\searrow 0.
\end{align}
  Since $J(2\eps)\to\infty$ as $\eps\searrow 0$ and $g\in\sSR_{-1/D}$ (with $-1/D<-1$), we infer from Proposition~\ref{prop:quotg} (iii) that
  $$
  \sum_{j> J(2\eps)} g(j)\sim \frac D{1-D} J(2\eps) g(J(2\eps)), \quad \text{ as } \eps\searrow 0.
  $$
  Using \eqref{eq:J-asymp}, it follows from Lemma~\ref{lemma:7} and the definition of $g$ that the right hand side is asymptotically similar to
  $  \frac{h(\eps)}{\eps} g(\frac{h(\eps)}{\eps})=h(\eps)$, as $\eps\searrow 0$.
  Combining this with \eqref{eq:sumA}, the assertion of the lemma follows.
\end{proof}

Now we are ready to reformulate and prove the implication (iii) $\Rightarrow$ (iv) in Theorem \ref{theo:main}. It is convenient to employ the function
\begin{align} \label{eq:tilde-delta1}
  \tilde \delta(2\eps):=\delta(\frac 1{2\eps})=\sum_{j=1}^\infty \left\{\frac {l_j}{2\eps}\right\}, \quad \eps>0.
\end{align}

\begin{Satz}
\label{theo:8}
	Let $D\in (0,1)$ and $h \in \sSR_{1-D}$. Let $(l_j)_{j\in\N}$ be a fracal string with  $l_j \asymp g(j)$, as $j\rightarrow \infty$. Then
	\begin{equation} \label{eq:tilde-delta2}
\tilde \delta(2\eps)\asymp \frac{h(\eps)}{\eps}, \quad \text{ as } \eps\searrow 0,
\end{equation}
or, equivalently,
$\delta(x)\asymp f(x), \text{ as } x\to\infty$,
where $f$ and $g$ are given as in Proposition~\ref{prop:g-and-f}.
	\end{Satz}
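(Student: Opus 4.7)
My plan is to decompose $\tilde\delta(2\eps)$ according to the string counting function $J$ and read off upper and lower bounds from the two auxiliary estimates already established: Proposition~\ref{prop:J-asymp} (which under the hypothesis $l_j\asymp g(j)$ gives $J(\eps)\asymp h(\eps)/\eps$) and Lemma~\ref{lem:sumA} (which under the same hypothesis gives $\sum_{j>J(2\eps)}l_j\asymp h(\eps)$). The geometric heart is the observation that for $j>J(2\eps)$ one has $l_j\leq 2\eps$ and hence $\{l_j/(2\eps)\}=l_j/(2\eps)$, while for $j\leq J(2\eps)$ the fractional part lies in $[0,1)$.

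Concretely, I would split
\begin{equation*}
\tilde\delta(2\eps)=\sum_{j\leq J(2\eps)}\Bigl\{\frac{l_j}{2\eps}\Bigr\}+\frac{1}{2\eps}\sum_{j>J(2\eps)} l_j.
\end{equation*}
By the asymptotic homogeneity of $h$ (Remark~\ref{rem:1-1}) together with Proposition~\ref{prop:J-asymp}, $J(2\eps)\asymp h(2\eps)/(2\eps)\asymp h(\eps)/\eps$, and by Lemma~\ref{lem:sumA}, $(2\eps)^{-1}\sum_{j>J(2\eps)}l_j\asymp h(\eps)/\eps$. For the upper bound I bound each fractional part in the first sum by $1$, so the first sum is at most $J(2\eps)\lesssim h(\eps)/\eps$, and combining with the estimate on the tail gives $\tilde\delta(2\eps)\lesssim h(\eps)/\eps$. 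For the lower bound I simply drop the first (nonnegative) sum and keep the tail, which already yields $\tilde\delta(2\eps)\geq(2\eps)^{-1}\sum_{j>J(2\eps)}l_j\gtrsim h(\eps)/\eps$. This proves \eqref{eq:tilde-delta2}.

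Finally, to pass to the equivalent formulation $\delta(x)\asymp f(x)$ as $x\to\infty$, I set $x=1/(2\eps)$ and use the asymptotic homogeneity of $h\in\sR_{1-D}$ to compute
\begin{equation*}
\frac{h(\eps)}{\eps}=2x\,h\!\left(\tfrac{1}{2x}\right)\sim 2x\cdot 2^{-(1-D)}h\!\left(\tfrac1x\right)=2^{D}f(x),\quad x\to\infty,
\end{equation*}
so that an $\asymp$-relation with $h(\eps)/\eps$ translates directly (up to the harmless constant $2^{D}$) into an $\asymp$-relation with $f(x)$. Since the real work has already been done in Proposition~\ref{prop:J-asymp} and Lemma~\ref{lem:sumA}, I do not anticipate any genuine obstacle here; the only delicate point is resisting the temptation to also estimate the first sum from below --- the lower bound comes for free from the tail alone, which is precisely why regular variation (through Lemma~\ref{lem:sumA}) is the right tool.
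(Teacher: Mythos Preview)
Your proposal is correct and follows essentially the same route as the paper: the identical split at $J(2\eps)$, Lemma~\ref{lem:sumA} for the tail (giving both the lower bound and part of the upper bound), and the trivial bound $\sum_{j\le J(2\eps)}\{l_j/(2\eps)\}\le J(2\eps)$ together with Proposition~\ref{prop:J-asymp} for the remaining upper estimate. The only cosmetic difference is that the paper first notes the equivalence $\delta(x)\asymp f(x)\Leftrightarrow$~\eqref{eq:tilde-delta2} via $x=1/(2\eps)$ and then proves \eqref{eq:tilde-delta2}, whereas you prove \eqref{eq:tilde-delta2} first and translate afterwards.
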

 \begin{proof}
The equivalence of the two assertions is obvious from the substitution $x=1/2\eps$ and the definition of $f$. Therefore, it suffices to prove \eqref{eq:tilde-delta2}.
We split  $\tilde \delta$ as follows
\begin{equation} \label{eq:tilde-delta3}
\tilde \delta(2\eps) = \sum_{j=1}^\infty \{\frac {l_j}{2\eps}\} = \sum_{j>J(2\eps)}\{\frac {l_j}{2\eps}\} + \sum_{j\leq J(2\eps)}\{\frac {l_j}{2\eps}\}.
\end{equation}
Now observe that for $j>J(2\eps)$, we have $l_j/2\eps<1$ and thus $\{l_j/2\eps\}=l_j/2\eps$. Therefore, we can employ Lemma~\ref{lem:sumA} to the first sum on the right and infer that
$$
\sum_{j>J(2\eps)}\{\frac {l_j}{2\eps}\}=\frac {1}{2\eps}\sum_{j>J(2\eps)}l_j\asymp \frac{h(\eps)}{2\eps}, \quad \text{ as } \eps\searrow 0.
$$
Since $0\leq \{y\}<1$, we infer for the second sum on the right of \eqref{eq:tilde-delta3} that, for any $\eps>0$,
$$
0\leq  \sum_{j\leq J(2\eps)}\{\frac {l_j}{2\eps}\}\leq J(2\eps),
$$
and by \eqref{eq:J-asymp} in Proposition~\ref{prop:J-asymp}, $J(2\eps)$ is bounded above by $c\cdot  h(\eps)/ \eps$ for some $c>0$. 
Combining the estimates of both sums, we conclude that $\tilde \delta(2\eps)$ is bounded from above and below by some constant multiple of $h(\eps)/\eps$ for all sufficiently small $\eps>0$, proving assertion \eqref{eq:tilde-delta2}.
\end{proof}

Our next aim is to show that $h$-Minkowski measurability (i.e.\ any of the assertions (vi), (vii), (viii) in Theorem~\ref{theo:main}) implies the exact asymptotic second term of the eigenvalue counting function as stated in \eqref{eq:main-N}. For the main argument we follow closely the idea of the proof in \cite{LapPo1} for the case of Minkowski measurable sets (which is also used in \cite{HeLap2}) to split the sum $\delta(x)$ in a very special way into three summands and then estimate each of them separately. In the estimation part Karamata theory turns out to be very useful again, allowing a simpler argument as in \cite{HeLap2}. 
Our first step is a refinement and generalization of Lemma~\ref{lem:sumA} above.

\begin{Lemma}
  \label{lem:sumA-fine}
  Assume that assertion (viii) of Theorem~\ref{theo:main} holds. Then, for any $k\in\N$,
  \begin{align*} 
    \sum_{j> J(k\eps)} l_j\sim \frac{D L^{D}}{1-D} k^{1-D}\, h(\eps), \text{ as } \eps\searrow 0.
  \end{align*}
\end{Lemma}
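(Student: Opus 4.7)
The plan is to follow closely the strategy of Lemma~\ref{lem:sumA}, but now exploiting the sharper hypothesis (viii), i.e.\ $l_j\sim L\,g(j)$ as $j\to\infty$, in order to convert every ``$\asymp$'' in that argument into an ``$\sim$'' and to compute the correct constant. The first step is to transfer the asymptotic equivalence of the terms to their tail sums. Writing $N_\eps:=J(k\eps)$, which tends to infinity as $\eps\searrow 0$, a routine $\eps$-argument based on $l_j=Lg(j)(1+o(1))$ yields
\begin{equation*}
\sum_{j>N_\eps} l_j\;\sim\;L\sum_{j>N_\eps} g(j), \quad \eps\searrow 0.
\end{equation*}

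Next I would apply Proposition~\ref{prop:quotg}(iii) to the regularly varying function $g\in\sSR_{-1/D}[\infty]$ (noting $-1/D<-1$). Since $g(N_\eps)=o(N_\eps g(N_\eps))$, the difference between summing from $N_\eps$ and from $N_\eps+1$ is negligible, so
\begin{equation*}
\sum_{j>N_\eps} g(j)\;\sim\;\frac{D}{1-D}\,N_\eps\,g(N_\eps), \quad \eps\searrow 0.
\end{equation*}
It remains to evaluate $N_\eps\,g(N_\eps)$ asymptotically. For $N_\eps=J(k\eps)$, I invoke the sharp asymptotic \eqref{eq:J-sim} of Proposition~\ref{prop:J-asymp} (which is available because (viii) holds), applied to the argument $k\eps$, together with the asymptotic homogeneity $h(k\eps)\sim k^{1-D}h(\eps)$ of $h\in\sSR_{1-D}$ (Remark~\ref{rem:1-1}); this produces
\begin{equation*}
J(k\eps)\;\sim\;L^{D}\,k^{-D}\,\frac{h(\eps)}{\eps}, \quad \eps\searrow 0.
\end{equation*}

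The key remaining computation is $g(J(k\eps))$. Since $g\in\sR_{-1/D}[\infty]$ and $J(k\eps)\to\infty$, Lemma~\ref{lemma:7} (in its $\infty$-version) gives $g(J(k\eps))\sim g\bigl(L^{D}k^{-D}h(\eps)/\eps\bigr)$; applying regular variation of $g$ at $\infty$ with index $-1/D$ (to the constant factor $L^{D}k^{-D}$) and then the tautology $g(h(\eps)/\eps)=H^{-1}(\eps/h(\eps))=H^{-1}(H(\eps))=\eps$, I obtain
\begin{equation*}
g(J(k\eps))\;\sim\;(L^{D}k^{-D})^{-1/D}\,\eps\;=\;L^{-1}k\,\eps, \quad \eps\searrow 0.
\end{equation*}
Multiplying the last two displays and substituting into the Proposition~\ref{prop:quotg}(iii) asymptotic, then into the first display, produces exactly the claimed constant $\tfrac{DL^{D}}{1-D}k^{1-D}$ in front of $h(\eps)$.

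The main obstacle is really just the bookkeeping: making sure each asymptotic step is legal (in particular that the ``transfer to tail sums'' step is justified, and that Lemma~\ref{lemma:7} applies through the composition $g\circ J$), and that the constants combine as advertised. No deeper machinery beyond what we have already assembled in Sections~\ref{section:4} and \ref{sec:proofs1} is needed; in particular, Proposition~\ref{prop:J-asymp} plays here exactly the role that equation~\eqref{eq:J-asymp} played in Lemma~\ref{lem:sumA}, but in its stronger ``$\sim$'' form.
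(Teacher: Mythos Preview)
Your proposal is correct and follows essentially the same route as the paper's proof: transfer $l_j\sim Lg(j)$ to the tail sum, apply Proposition~\ref{prop:quotg}(iii), and then evaluate $J(k\eps)\,g(J(k\eps))$ via \eqref{eq:J-sim}, Lemma~\ref{lemma:7}, and the identity $g(h(\eps)/\eps)=\eps$. The only cosmetic difference is the order of operations: you apply the homogeneity $h(k\eps)\sim k^{1-D}h(\eps)$ before evaluating $g$, whereas the paper keeps the argument $k\eps$ throughout and only uses homogeneity at the very end; both yield the same constant. Your explicit remark that the single term $g(N_\eps)$ is negligible when passing from $\sum_{j\ge N_\eps}$ to $\sum_{j>N_\eps}$ is a nice touch that the paper glosses over, and your citation of \eqref{eq:J-sim} (rather than \eqref{eq:J-asymp}) is in fact what is needed to obtain $\sim$ instead of $\asymp$ at that step.
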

\begin{proof}
   Due to the assumption $l_j\sim L\, g(j)$ as $j\to \infty$, we find for any $\gamma >0$ some $j_0>0$ such that $l_j/(L\,g(j))\in[1-\gamma,1+\gamma]$ for all $j\geq j_0$. Given $k\in\N$, fix $\eps_0=\eps_0(k)>0$ small enough such that $J(k\eps_0)\geq j_0$. Then we have for any $0<\eps\leq\eps_0$
  $$
  (1-\gamma)L\sum_{j> J(k\eps)} g(j)\leq \sum_{j> J(k\eps)} l_j\leq (1+\gamma)L\sum_{j> J(2\eps)} g(j).
  $$
  Since we can find such $j_0$ and $\eps_0$ for any $\gamma>0$, we infer
\begin{align*}
  \sum_{j> J(k\eps)} l_j\sim L \sum_{j> J(k\eps)} g(j), \quad \text{ as } \eps\searrow 0,
  \end{align*}
  for any fixed $k\in\N$.
  Since $J(k\eps)\to\infty$ as $\eps\searrow 0$ and $g\in\sSR_{-1/D}$ (with $-1/D<-1$), we infer from Proposition~\ref{prop:quotg} (iii) that, for any $k\in\N$,
  $$
  \sum_{j> J(k\eps)} g(j)\sim \frac{D}{1-D} J(k\eps) g(J(k\eps)), \quad \text{ as } \eps\searrow 0.
  $$
  Applying \eqref{eq:J-asymp} and taking into account the definition of $g$ (cf.~Proposition~\ref{prop:g-and-f}) and Lemma~\ref{lemma:7}, we infer
  $$
  J(k\eps) g(J(k\eps))\sim L^D \frac{h(k\eps)}{k\eps} g\left(L^D \frac{h(k\eps)}{k\eps}\right)
  \sim L^{D-1}\frac{h(k\eps)}{k\eps} g\left(\frac{h(k\eps)}{k\eps}\right)=L^{D-1}\,h(k\eps),
  $$
  as  $\eps\searrow 0$. Combining the last three estimates and using the homogeneity of $h$,
  the assertion of the lemma follows.
\end{proof}

Now we are ready for the main step in the proof of an exact asymptotic second term of $N$. We will show that (viii) in Theorem~\ref{theo:main} implies an exact estimate for the packing defect $\delta$.

\begin{Satz}
\label{theo:J-delta}
	Let $D\in (0,1)$ and $h \in \sSR_{1-D}$. Let $(l_j)_{j\in\N}$ be a fractal string with  $l_j \sim L g(j)$, as $j\rightarrow \infty$. Then
	\begin{equation} \label{eq:delta2}
\delta(x)\sim -\zeta(D)L^D f(x), \quad \text{ as } x\to 0,
\end{equation}
where $g$ and $f$ are given as in Proposition~\ref{prop:g-and-f}.
	\end{Satz}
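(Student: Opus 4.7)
The plan is to convert $\delta(x)$ into a quantity involving the string counting function $J$, and then to pinpoint the constant $-\zeta(D)L^D$ by matching the discrepancy between a partial sum and an integral of a regularly varying function with the classical Euler--Maclaurin asymptotic
\begin{equation*}
\sum_{k=1}^N k^{-D}-\frac{N^{1-D}}{1-D}\longrightarrow\zeta(D),\quad\text{ as } N\to\infty,
\end{equation*}
which is valid for $D\in(0,1)$.  The starting point is the identity (valid for $x>0$ outside a countable exceptional set, which is irrelevant for the $\sim$-statement)
\begin{equation*}
\delta(x) \,=\, x|\Omega|-\sum_{k=1}^\infty J(k/x),
\end{equation*}
obtained by writing $\{l_jx\}=l_jx-[l_jx]$, using the absolute convergence $\sum_j l_j=|\Omega|<\infty$, and interchanging the order of summation via the layer-cake decomposition $[l_jx]=\#\{k\in\N:l_jx\geq k\}$. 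The task thus reduces to showing that $\sum_{k=1}^\infty J(k/x)=x|\Omega|+\zeta(D)L^Df(x)+o(f(x))$ as $x\to\infty$.

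The next step is to introduce an auxiliary cutoff $N\in\N$ and treat the two ranges of $k$ separately.  For $k\leq N$: the hypothesis combined with Proposition~\ref{prop:J-asymp} yields $J\in\sR_{-D}$ with $J(\eps)\sim L^Dh(\eps)/\eps$ as $\eps\searrow 0$; using the asymptotic homogeneity $h(k/x)\sim k^{1-D}h(1/x)$ coming from $h\in\sR_{1-D}$, this gives $J(k/x)\sim L^Dk^{-D}f(x)$ for each fixed $k$, so summing $N$ such relations produces
\begin{equation*}
\sum_{k=1}^N J(k/x)=L^Df(x)\sum_{k=1}^N k^{-D}+o_N(f(x)),\quad \text{as } x\to\infty.
\end{equation*}
For $k>N$: the monotonicity of $J$ yields sum-integral bounds with error $O(J(N/x))=O(N^{-D}f(x))$, while the integral $x\int_0^{N/x}J(u)\,du$ is evaluated via the at-zero analogue of Karamata's Theorem~\ref{theo:kara1} (applied to $J\in\sR_{-D}$, through the $u\mapsto 1/u$ substitution of Remark~\ref{rem:1}), yielding $x\int_0^{N/x}J(u)\,du\sim L^DN^{1-D}f(x)/(1-D)$, and hence $x\int_{N/x}^\infty J(u)\,du=x|\Omega|-L^DN^{1-D}f(x)/(1-D)+o_N(f(x))$.

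Combining these estimates, for each fixed $N$ I would obtain
\begin{equation*}
\frac{\delta(x)}{L^Df(x)}=-\Bigl(\sum_{k=1}^N k^{-D}-\frac{N^{1-D}}{1-D}\Bigr)+o_N(1)+O(N^{-D}),\quad\text{as }x\to\infty,
\end{equation*}
and then conclude via a standard two-parameter limit argument: given $\eps>0$, fix $N$ so large that the Euler--Maclaurin bracket is within $\eps$ of $\zeta(D)$ and $N^{-D}<\eps$, then let $x\to\infty$. Since $\eps$ is arbitrary, $\delta(x)/f(x)\to-\zeta(D)L^D$. The main obstacle is to control the error terms uniformly in $N$ when interchanging the limits $x\to\infty$ and $N\to\infty$; this is precisely where Karamata theory is decisive, through the Uniform Convergence Theorem~\ref{theo:UCT} (which makes the fixed-$k$ asymptotics summable with a controllable remainder) and the regular variation of $J$ at $0$ (which forces the tail-sum error $O(N^{-D}f(x))$ to vanish relative to $f(x)$ as $N\to\infty$).
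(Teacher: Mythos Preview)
Your argument is correct, but it follows a genuinely different route from the paper's.

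The paper decomposes $\delta(x)=\sum_j\{l_jx\}$ according to the \emph{string index} $j$: for a fixed cutoff $k$ it groups the terms by the value of $[l_jx]\in\{0,1,\ldots,k-1\}$ (plus a remainder for $j\le J(k/x)$), and after rearrangement obtains three pieces $A+B+C$. The piece $A=x\sum_{j>J(k/x)}l_j$ is controlled by a separate lemma (Lemma~\ref{lem:sumA-fine}, which rests on Proposition~\ref{prop:quotg}(iii), i.e.\ Karamata applied to $g$), the piece $B$ by the regular variation of $J$, and $C$ is the remainder bounded by $J(k/x)$. One then lets $k\to\infty$.

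You instead start from the Abel/layer--cake identity $\delta(x)=x|\Omega|-\sum_{k\ge 1}J(k/x)$ and split the sum over the \emph{level index} $k$ at a cutoff $N$. The head $\sum_{k\le N}J(k/x)$ is handled as a finite sum of individually asymptotic terms, and the tail $\sum_{k>N}J(k/x)$ is compared with $x\int_{N/x}^\infty J=x|\Omega|-x\int_0^{N/x}J$, whose second summand is evaluated directly by the at--zero version of Karamata's theorem for $J\in\sR_{-D}$. Both routes land on the same Euler--Maclaurin expression $N^{1-D}/(1-D)-\sum_{k\le N}k^{-D}\to -\zeta(D)$ and the same $O(N^{-D})$ remainder. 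Your route is somewhat slicker in that it bypasses Lemma~\ref{lem:sumA-fine} and the $A,B,C$ bookkeeping; the paper's route has the advantage of staying entirely within the framework already set up and mirrors the original Lapidus--Pomerance argument.

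One small comment: your closing remark overstates the role of the Uniform Convergence Theorem. For fixed $N$ the head is a \emph{finite} sum of asymptotically equivalent terms, so no uniformity in $k$ is needed there; the two--parameter limit works simply because the tail error is $O(N^{-D})$ uniformly in $x$ (once $x$ is large), and this comes from the monotonicity of $J$ together with the Karamata estimate, not from the UCT.
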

 \begin{proof}
For fixed $k\in\N$, we split  $\delta$ as follows
\begin{equation} \label{eq:delta3}
 \delta(x) = \sum_{j>J(1/x)}\{l_jx\} + \sum_{q=2}^{k} \sum_{j= J(q/x)+1}^{J((q-1)/x)} \{l_jx\}+\sum_{j\leq J(k/x)} \{l_j x\}
\end{equation}
Now observe that $J(q/x)<j\leq J((q-1)/x)$ implies $[l_jx]=q-1$, and therefore the second sum equals
 \begin{align*}
     \sum_{q=2}^{k} \sum_{j= J(q/x)+1}^{J((q-1)/x)} (l_jx- (q-1))
 &=\sum_{j=J(k/x)+1}^{J(1/x)} l_jx -\sum_{q=2}^{k}(q-1)\left(J\left(\frac{q-1}x\right)-J\left(\frac qx\right)\right)\\
 &=x \sum_{j=J(k/x)+1}^{J(1/x)} l_j -\sum_{q=1}^{k-1}J\left(\frac{q}x\right)+(k-1)J\left(\frac kx\right).
 \end{align*}
Combining the first sum of this last expression with the first sum in \eqref{eq:delta3}, in which all terms satisfy $l_jx<1$ implying  $\{l_jx\}=l_jx$, we obtain
\begin{align*}
  \delta(x) = x \sum_{j>J(k/x)}l_j + \left(kJ\left(\frac kx\right)-\sum_{q=1}^{k-1}J\left(\frac{q}x\right)\right) +\sum_{j\leq J(k/x)} \{l_j x\}- J\left(\frac kx\right),
\end{align*}
for each $k\in\N$ and each $x>0$. We will see in a moment, that the first two terms
$$
A:= x \sum_{j>J(k/x)}l_j
\quad \text{ and }\quad
B:= kJ\left(\frac kx\right)-\sum_{q=1}^{k-1}J\left(\frac{q}x\right)
$$
contribute asymptotically to $\delta(x)$ as $x\to\infty$ for any $k\in\N$, while the remainder term
$$
C:=\sum_{j\leq J(k/x)} \{l_j x\}- J\left(\frac kx\right)=\sum_{j\leq J(k/x)} \left(\{l_j x\}-1\right)
$$
will vanish as $k\to\infty$. (Note that $A, B, C$ depend on $x$ and $k$.) Up to here we followed the argument of He and Lapidus in the proof of \cite[Theorem~4.3]{HeLap2} (which is similar to the one in the proof of \cite[Theorem 4.2]{LapPo1}), the estimates are now derived in an easier way using Karamata theory.

First, by Lemma~\ref{lem:sumA-fine}, we get for $A$:
$$
A\sim \frac{D L^{D}}{1-D} k^{1-D}\, x h(1/x)= L^{D}\,\frac{D}{1-D} k^{1-D}\, f(x), \quad\text{ as } x\to\infty. $$
Second, since $J\in\sR_{-D}$ by Proposition~\ref{prop:J-asymp}, the homogeneity implies in particular that $J(q/x)\sim q^{-D} J(1/x)$, as $x\to\infty$, for any $q>0$ and together with \eqref{eq:J-sim} we infer that
$$
B\sim \left(k^{1-D} -\sum_{q=1}^{k-1}q^{-D}\right)J\left(\frac{1}x\right)
\sim L^D \left(k^{1-D} -\sum_{q=1}^{k-1}q^{-D}\right)f(x), \quad \text{ as } x\to\infty.
$$
Combining the estimates for $A$ and $B$ we conclude that, for each $k\in\N$,
\begin{align}
   \label{eq:A+B}
   \frac{A+B}{L^D f(x)}\to  \frac{1 }{1-D} k^{1-D} -\sum_{q=1}^{k-1}q^{-D}= w_k(D)+\frac 1{1-D} \text{ as } x\to\infty,
\end{align}
where the function $w_k$ is defined for each $s\in\C$ by
$$
w_k(s):=\int_1^k(t^{-s}-[t]^{-s}) dt \quad \left(=-\frac 1{1-s} +\frac 1{1-s}k^{1-s}-\sum_{q=1}^{k-1} q^{-s}, s\neq 1\right).
$$
The functions $w_k$ are entire and, as $k\to\infty$, they converge (uniformly on any compact subset of $\Re(s)>0$) to the function
$$
w(s):=\int_1^\infty(t^{-s}-[t]^{-s}) dt.
$$
$w$ is analytic in $\Re(s)>0$ and known to satisfy the relation $w(s)=-1/(1-s)-\zeta(s)$ for  $\Re(s)>0$, see e.g.~\cite[eq.~(2.3)]{LapPo1}. In particular, this implies
\begin{align}
  \label{eq:zeta}
  w_k(D)+\frac 1{1-D}\to -\zeta(D), \text{ as } k\to\infty.
\end{align}
It remains to estimate $C$. The obvious relation $-1\leq{l_j x}-1<0$ implies $-J(k/x)\leq C\leq 0$ for any $k\in\N$. Using again Proposition~\ref{prop:J-asymp}, we infer that $J(k/x)\sim k^{-D} J(1/x)\sim L^D k^{-D} f(x)$, as $x\to\infty$. Hence the expression $-L^D k^{-D} f(x)$ is essentially a lower bound for $C$. More precisely, 
\begin{align}
  \label{eq:C} - k^{-D}= \lim_{x\to\infty}\frac{-J(k/x)}{L^D f(x)}\leq \liminf_{x\to\infty}\frac{C}{L^D f(x)}\leq \limsup_{x\to\infty}\frac{C}{L^D f(x)}\leq 0.
\end{align}
Combining now the estimates for $C$ in \eqref{eq:C} and $A+B$ in \eqref{eq:A+B}, and recalling that $\delta(x)=A+B+C$, we infer that, for each $k\in\N$,
$$
 w_k(D)+\frac 1{1-D}- k^{-D}\leq \liminf_{x\to\infty}\frac{\delta(x)}{L^D f(x)}\leq \limsup_{x\to\infty}\frac{\delta(x)}{L^D f(x)}= w_k(D)+\frac 1{1-D}
$$
Letting now $k\to\infty$, the left and right expression both converge to $-\zeta(D)$, which completes the proof of the theorem.
\end{proof}

It remains to show that assertion (iv) in Theorem \ref{theo:main} implies at least one of the assertions (i), (ii), (iii) (and therefore all of them by the results in Section~\ref{sec:proofs1}). Due to the equivalence of \eqref{eq:J-asymp} with (ii), the following statement is essentially the implication (iv) $\Rightarrow$ (ii).
\begin{Satz} \label{theo:delta-J}
Let $D \in (0,1)$ and $h\in \sSR_{1-D}$. Let $\sL=(l_j)_{j\in\N}$ be a fractal string such that $\delta(x) \asymp f(x)$, as $x\rightarrow \infty$.
 Then $J(1/x) \asymp f(x)$, as $x\rightarrow \infty$.
\end{Satz}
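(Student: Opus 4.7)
The plan is to prove the two-sided bound $J(1/x) \asymp f(x)$ by establishing the upper and lower estimates separately, using direct comparisons between the packing defect $\delta$ and the counting function $J$, together with Karamata's theorem to control the regularly varying tails. Writing $K(s) := J(1/s)$, a non-decreasing function of $s$, the goal becomes $K(x) \asymp f(x)$ as $x \to \infty$. Two elementary estimates will drive the argument. First, a simple contribution count gives, for every $x > 0$,
\[
\delta(x) \;\geq\; \sum_{j\,:\,1/(2x) < l_j < 1/x} \{l_j x\} \;\geq\; \tfrac12 \bigl(K(2x) - K(x)\bigr),
\]
since $\{y\} = y \geq 1/2$ for $y \in [1/2, 1)$. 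Second, writing $\delta(x) = x|\Omega|_1 - \sum_j [l_j x]$ and comparing the resulting sum $\sum_{k\geq 1} J(k/x)$ with the Riemann integral $x\int_{1/x}^\infty J(t)\,dt$ via the monotonicity of $J$ yields, after the substitution $s = 1/t$,
\[
\delta(x) \;\leq\; x\int_x^\infty \frac{K(s)}{s^2}\, ds.
\]

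For the upper bound on $K$, I iterate the first estimate at dyadic scales $2^k x_0$, $k = 0,\dots,n-1$. Telescoping gives
\[
K(2^n x_0) \;\leq\; K(x_0) + 2 c_2 \sum_{k=0}^{n-1} f(2^k x_0),
\]
where $c_2$ is the upper constant in $\delta(x) \asymp f(x)$. Since $f \in \sSR_D[\infty]$ with $D \in (0,1)$, the regular variation of $f$ implies that this geometric-type sum is comparable to its last term, so $K(2^n x_0) = O(f(2^n x_0))$. Filling in intermediate scales via the monotonicity of $K$ and the regular variation of $f$ yields $K(y) \leq C f(y)$ for all $y$ sufficiently large.

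For the lower bound on $K$, I split the integral in the second estimate at some $\lambda x$ with $\lambda > 1$ to be chosen. Using the monotonicity of $K$ for the inner piece and the just-established upper bound $K(s) \leq C f(s)$ for the outer piece gives
\[
x \int_x^\infty \frac{K(s)}{s^2}\, ds \;\leq\; (1 - 1/\lambda)\, K(\lambda x) + C x \int_{\lambda x}^\infty \frac{f(s)}{s^2}\, ds.
\]
Karamata's theorem (Theorem~\ref{theo:kara1}(ii) applied to $f \in \sR_D[\infty]$ with $\sigma = -2$, which is admissible since $D < 1$) gives the tail asymptotic $x\int_{\lambda x}^\infty f(s)/s^2\, ds \sim \frac{\lambda^{D-1}}{1-D}\,f(x)$. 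Combined with $c_1 f(x) \leq \delta(x)$, this yields
\[
c_1 f(x) \;\leq\; (1-1/\lambda)\, K(\lambda x) + \tilde{C}\, \lambda^{D-1} f(x)
\]
for $x$ large, with $\tilde C = C/(1-D)$. Since $D - 1 < 0$, I choose $\lambda$ large enough that $\tilde C\,\lambda^{D-1} < c_1/2$, obtaining $K(\lambda x) \geq (c_1/2)\,f(x)$. Substituting $y = \lambda x$ and using the asymptotic homogeneity $f(y/\lambda) \sim \lambda^{-D} f(y)$ yields $K(y) \geq c'' f(y)$ for $y$ large, completing the proof.

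The main technical obstacle is the interdependence of the two bounds: the lower estimate on $K$ requires the previously established upper estimate as input to control the outer tail, so the order of the arguments is essential and $\lambda$ must be calibrated against the constants from Karamata's theorem and the upper-bound step. The non-generic case in which some $l_j$ coincides with $k/x$ is cosmetic, since it affects the counting by only a bounded amount that does not disturb the asymptotic similarity.
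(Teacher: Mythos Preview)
Your proof is correct and takes a genuinely different route from the paper's argument, particularly for the lower bound.

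For the \emph{upper bound} on $K=J(1/\cdot)$, both you and the paper use dyadic telescoping, but your increment estimate is cleaner. You observe that indices with $l_jx\in(1/2,1)$ satisfy $\{l_jx\}=l_jx\geq 1/2$, giving $K(2x)-K(x)\le 2\delta(x)$ directly. The paper instead looks at the range $l_jx\in(1,2)$ (i.e.\ at $J(1/x)-J(2/x)$), where the fractional part can be small; it must therefore split into the cases $\{l_jx\}\ge 1/2$ and $\{l_jx\}<1/2$ and invoke \emph{both} $\delta(x)$ and $\delta(x/2)$ to control the two pieces before telescoping. Your version avoids this detour. In the telescoped sum $\sum_{k<n}f(2^kx_0)$, the domination by the last term is indeed routine for $f\in\sR_D[\infty]$ with $D>0$: from $f(2y)/f(y)\to 2^D>1$ one gets $f(2^kx_0)\le\beta^{k-n+1}f(2^{n-1}x_0)$ for some $\beta>1$ and large $x_0$, which is the same mechanism the paper uses (in the other direction) in its Part~2.

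For the \emph{lower bound}, the approaches differ substantially. The paper uses the purely combinatorial identity $k\{\gamma\}=\{k\gamma\}$ (valid when $k\{\gamma\}<1$) to compare $\delta(x)$ with $\delta(kx)$, and then exploits regular variation of $f$ to choose $k$ so that the ``small fractional part'' portion $U(x)$ of $\delta(x)$ is at most $\tfrac12\delta(x)$; the remaining portion $V(x)$ is then bounded above by $J(1/(kx))$. This argument is independent of the upper bound. Your argument instead passes through the integral inequality $\delta(x)\le x\int_0^{1/x}J(t)\,dt$ (equivalently $x\int_x^\infty K(s)s^{-2}\,ds$), feeds in the already-established upper bound $K\le Cf$ on the tail, and applies Karamata's theorem to the remaining integral of $f(s)s^{-2}$. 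The price is that your two bounds are no longer logically independent; the gain is a more transparent use of the Karamata machinery that the paper advertises as its main tool.

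Both arguments are of comparable length and difficulty; yours leans more on Karamata's theorem (Theorem~\ref{theo:kara1}(ii)), while the paper's lower-bound step is a self-contained combinatorial trick due to Lapidus--Pomerance that uses only the pointwise regular-variation limit of $f$.
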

\begin{proof}
By assumption, there are positive constants $a_1, a_2$ and $x_0$ such that
\begin{equation}\label{eq:152}
a_1 f(x) \leq \delta(x) \leq a_2 f(x),
\end{equation}
for all $x\geq x_0$.

\noindent \emph{Part 1}: We first derive a lower bound for $J$. Fix some small $\epsilon_0>0$ and choose some integer $k \geq 2$  with
\begin{equation*}
k > \left( \frac{2 a_2(1+\epsilon_0)}{a_1}  \right)^{\frac{1}{1-D}}, \text{ i.e.\ such that } \quad k^{D}<\frac{a_1}{2a_2(1+\epsilon_0)}k.
\end{equation*}
Since $f\in \sSR_{D}[\infty]$, there is an $x_1\geq x_0>0$ such that
\begin{equation}\label{eq:115}
k^{D}(1-\epsilon_0) \leq \frac{f(kx)}{f(x)} \leq k^{D} (1+\epsilon_0) < \frac{a_1}{2a_2}k,
\end{equation}
for all $x\geq x_1$, where the last inequality is due to the choice of $k$. 

We split the packing defect $\delta(x)$ into two sums as follows
\begin{equation}\label{eq:116}
\delta(x) = \sum_{\{l_jx  \} < k^{-1} } \{l_j x\}+ \sum_{\{l_jx\} \geq k^{-1}} \{l_jx\}=:U(x)+V(x).
\end{equation}
Observe that for any $\gamma>0$ such that $k \{ \gamma \}<1$, we have $k\{ \gamma  \} = \{ k\gamma \}$. Indeed, since $k [\gamma]\in\N$, we get
\begin{equation*}
1 > k\{ \gamma  \}= \{ k \{ \gamma  \} \} = \{ k ( \gamma - [\gamma]  ) \} = \{ k\gamma -  k [\gamma]\} = \{ k \gamma \}.
\end{equation*}
Applying this to the sum $U$, we obtain for $x\geq x_1$,
\begin{equation*}
kU(x)  {=} \sum_{\{l_jx\} < k^{-1} } \{ l_jkx  \}  < \delta(kx)\overset{(\ref{eq:152})}{\leq} a_2 f(kx) \overset{(\ref{eq:115})}{\leq} \frac{a_1}{2}kf(x) \overset{(\ref{eq:152})}{\leq}\frac{1}{2}k\delta(x).
\end{equation*}
Thus $U(x)\leq \frac{1}{2}\delta(x)$, which together with $U(x)+V(x) = \delta(x)$ implies that  $$V(x)\geq \frac{1}{2}\delta(x)$$ for all $x\geq x_1$.
On the other hand, since every summand in $V$ is less than 1, we have
$$
V(x)\leq \sum_{\{l_jx\} \geq k^{-1}} 1\leq \#\{j:{\{l_jx\} \geq k^{-1}}\}\leq \#\{j:l_jx \geq k^{-1}\}=J\left(\frac 1{kx}\right).
$$
Combining both estimates, we obtain for $x\geq x_1$
$$
J\left(\frac 1{kx}\right)\geq \frac{1}{2}\delta(x)\geq \frac{1}{2}a_1f(x),
$$
which provides a lower bound on $J(1/x)$ in terms of $f(x)$ as desired. Substituting $kx$ by $x$ and using again the homogeneity \eqref{eq:115}, we conclude that
\begin{equation}\label{eq:117}
J\left(\frac{1}{x}\right){\geq}\frac{a_1}{2}f(x/k)\geq \frac{a_1}{2}k^{-D}(1-\epsilon_0)f(x),
\end{equation}
for all $x \geq kx_1$.

\emph{Part 2}: In order to derive an upper bound for $J$, we first prove the following estimate: there are positive constants $c, x_2$ such that, for each $x\geq x_2$,
\begin{equation}\label{eq:121}
J\left(\frac{1}{x}\right)-J\left(\frac{2}{x}\right) \leq c\, f(x).
\end{equation}
For a proof of \eqref{eq:121} consider the interval $ I:=(J(\frac{2}{x}),J(\frac{1}{x})] $. Let $J_\sigma\subset I$ denote the subset of integers $j$ in $I$  with $\{l_jx\} \geq \frac{1}{2}$, and $J_\kappa\subset I$ denote the set of those integers $j$ in $I$ with $\{l_jx\} < \frac{1}{2}$. We write $\sigma := \#J_\sigma$ and $\kappa := \#J_\kappa$ for the corresponding numbers of elements. Then $\sigma + \kappa = J(\frac{1}{x})-J(\frac{2}{x})$. On the one hand, we see that
\begin{equation}\label{eq:118}
\delta(x) = \sum_{j\in \N} \{l_jx\} \geq \sum_{j \in J_{\sigma}} \{l_jx\}\geq \sum_{j\in J_{\sigma}} \frac{1}{2}= \frac{1}{2}\sigma.
\end{equation}
On the other hand,  $J(\frac{2}{x}) < j \leq J(\frac{1}{x})$ implies $ \frac{2}{x} > l_j \geq \frac{1}{x}$, or equivalently $2> l_jx\geq 1$. Yet for $j\in J_\kappa$, we have $\{l_jx\}\leq \frac{1}{2}$, which together yields $l_jx \in [1,\frac{3}{2}]$. Hence $\{l_jx\}\leq \{\frac{l_jx}{2}\} \in [\frac{1}{2},\frac{3}{4}]$. Therefore, we obtain
\begin{equation}\label{eq:119}
\delta\left(\frac{x}{2}\right) = \sum_{j\in \N} \left\{l_j\frac{x}{2}\right\} \geq \sum_{j \in J_{\kappa}} \{l_jx\}\geq \sum_{j\in J_\kappa} \frac{1}{2}= \frac{1}{2}\kappa.
\end{equation}
Combining inequalities (\ref{eq:118}) and (\ref{eq:119}), and taking into account (\ref{eq:152}), we obtain
\begin{equation}\label{eq:120}
J\left(\frac{1}{x}\right)-J\left(\frac{2}{x}\right) = \sigma + \kappa \leq 2\delta(x) + 2 \delta\left(\frac{x}{2}\right){\leq} 2a_2f(x) + 2 a_2 f\left(\frac{x}{2}\right).
\end{equation}
Since $f\in \sR_{D}[\infty]$, the homogeneity property implies in particular that we can find positive constants $c_2,x_2$ (with $c_2<1$!) such that
\begin{equation*}\label{eq:154}
f\left(x/2\right) <c_2 f(x),
\end{equation*}
for all $x\geq x_2$. (Without loss of generality, we may assume that $x_2\geq x_0$.) Applying this to (\ref{eq:120}) 
yields
\begin{equation*}\label{eq:120-2}
J\left(\frac{1}{x}\right)-J\left(\frac{2}{x}\right)\leq 2a_2(1+c_2) f(x),
\end{equation*}
for each $x\geq x_2$, proving the assertion \eqref{eq:121} for the constant $c:=2a_2(1+c_2)$.

Now we use \eqref{eq:121}, to derive an upper bound for $J$. For $x\geq 2x_2$ let $m = m(x)$ be the unique integer such that $2^m x_2< x \leq 2^{m+1} x_2$.
Note that, by the choice of $m$, we have ${2^m}/{x}\geq {1}/{2x_2}$, which implies $J({2^m}/{x})\leq J({1}/{2x_2})=:j_0$. Writing $J(1/x)$ as a telescope sum and applying \eqref{eq:121}, we infer
\begin{align*}
J(1/x)
& = \sum_{k=0}^{m-1} \left( J(2^{k}/x) - J(2^{k-1}/x)  \right) + J(2^m/x)
\leq\sum_{k=0}^{m-1} c f\left({x}/{2^{k}}\right) + j_0\\
&\leq c \sum_{k=0}^{m-1}  c_2^k f(x) + j_0
\leq c f(x) \sum_{k=0}^{\infty}  c_2^k + j_0
= \frac{c}{1-c_2} f(x) + j_0,
\end{align*}
for any $x\geq 2x_2$, where the convergence of the geometric series is ensured, since $c_2 <1$. This gives an upper bound on $J(1/x)$ in terms of $f(x)$ and completes the proof. (Note that $f(x)\to\infty$ as $x\to\infty$.)
\end{proof}

To complete the proof of Theorem~\ref{theo:main}, we recall that there is a direct connection between the packing defect $\delta$ and the string counting function $N$ valid for any bounded open set $\Omega\subset\R$ independent of any additional assumptions on the growth behavior. It is given by the equation
\begin{align} \label{eq:N-delta}
   \varphi(\lambda)-N(\lambda)=\delta(\sqrt{\lambda}/\pi), \quad \lambda >0,
\end{align}
cf.\ e.g.\ \cite[eq.\ (2.2)]{LapPo1} and see also Remark~\ref{rem:N-delta} below.
 It implies the equivalence (iv) $\Leftrightarrow$ (v) in part I of Theorem~\ref{theo:main} as well as the equivalence of assertion \eqref{eq:delta2} in Theorem~\ref{theo:J-delta} with assertion \eqref{eq:main-N} in part II of Theorem~\ref{theo:main}. Therefore, by Theorem~\ref{theo:J-delta}, the assertions (vi)-(viii) imply indeed \eqref{eq:main-N}. This completes the proof.

\begin{rem} \label{rem:N-delta}
\emph{Equation \eqref{eq:N-delta} is seen as follows.
  For an open interval $I=(a,b)$ of length $l=b-a$, consider the Laplace operator $—d^2/dy^2$. Under Dirichlet boundary conditions (i.e. $u(a)=u(b)=0$) the eigenvalues are $\lambda_k=(\pi/l)^2 k^2$, $k\in\N$. Hence, for the eigenvalue counting function $N(I;\lambda)$ of $I$, we have
$$
N(I;\lambda)=\#\{k\in\N: \lambda_k\leq\lambda\}=\#\{k\in\N:  k\leq l\sqrt{\lambda}/\pi\}=[l\sqrt{\lambda}/\pi].
$$
Hence, we get for the eigenvalue counting function $N$ of  $\Omega$ (consisting of the disjoint open intervals $I_j$ of lengths $l_j$, $j\in\N$)
$$
N(\lambda)=\sum_{j=1}^\infty N(I_j;\lambda)=\sum_{j=1}^\infty [l_j \sqrt{\lambda}/\pi]=\sum_{j=1}^\infty [l_j x],
$$
where $x:=\sqrt{\lambda}/\pi$. Since $|\Omega|_1=\sum_{j=1}^\infty l_j$, it follows in particular that
$$
\varphi(\lambda)-N(\lambda)=\sqrt{\lambda}/\pi \sum_{j=1}^\infty l_j-\sum_{j=1}^\infty [l_j x]=\sum_{j=1}^\infty l_jx-\sum_{j=1}^\infty [l_j x]=\delta(x).
$$
}
\end{rem}

\begin{proof}[Summary of the proof of Theorem~\ref{theo:main}] \label{diagrams}
   The following diagram gives an overview over the various steps of the proof of part I of Theorem~\ref{theo:main}. It also shows the central role of the S-content (or the string counting function $J$) for the proof.
 \begin{align*}
    \begin{xy}
  \xymatrix{
      (i)\ar@{<=>}[r]^{^{\text{\scriptsize Thm~\ref{thm:M-S-equiv}}}}
      & (ii) \ar@{<=>}[r]^{^{\text{\scriptsize Prop~\ref{prop:J-asymp}}}}
      & \eqref{eq:J-asymp} \ar@{<=}[r]^{^{\text{\scriptsize Thm~\ref{theo:delta-J}}}} \ar@/_/[r]
      &(iv) \ar@{<=>}[r]^{^\text{\scriptsize \eqref{eq:N-delta}}} &(v)\\
                  &   (iii)   \ar@{<=>}[u]_{\text{\scriptsize Thm~\ref{theo:4}}}   \ar@{-}[rru]_{\text{ Thm~\ref{theo:8}}}  &    &
  }
\end{xy}
 \end{align*}
	
Our proof of part II of Theorem~\ref{theo:main} has a similar structure:
 \begin{align*}
    \begin{xy}
  \xymatrix{
      (vi)\ar@{<=>}[r]^{^{\text{\scriptsize Thm~\ref{theo:11}}}}
      & (vii) \ar@{<=>}[r]^{^{\text{\scriptsize Prop~\ref{prop:J-asymp}}}}
      & \eqref{eq:J-sim}  \ar@/_/[r]
      &\eqref{eq:delta2} \ar@{<=>}[r]^{^\text{\scriptsize \eqref{eq:N-delta}}} &\eqref{eq:main-N}\\
                  &   (viii)   \ar@{<=>}[u]_{\text{\scriptsize Thm~\ref{theo:13}}}  \ar@{-}[rru]_{\text{ Thm~\ref{theo:J-delta}}}  &    &
  }
\end{xy}
 \end{align*}
	Relation \eqref{eq:main-M-S-L} between the contents and $L$ is obvious from the definition of the constant $S$ in Theorem \ref{theo:13} (and Theorem~\ref{theo:13}).
\end{proof}

\paragraph{\bf Acknowledgements}
We are grateful to G\"unter Last for pointing us to the theory of regularly varying functions.
Part of the results are based on the master's thesis of the first author.

\bibliographystyle{abbrv}
\bibliography{regvar-pub}
\end{document}